\documentclass[12pt]{amsart}
\usepackage[margin = 1in]{geometry}
\usepackage[english]{babel}
\usepackage{amsmath}         
\usepackage{mathtools}       
\usepackage{amssymb}   
\usepackage{amsthm}
\usepackage{mathrsfs}        			                  
\usepackage{enumitem}
\usepackage[dvipsnames]{xcolor}
\usepackage{xcolor}
\usepackage{tikz}
\usepackage{tikz-cd}
\usepackage{pgfplots}
\usepackage{float}
\usepackage{subfig}
\usepackage{graphicx}
\usepackage{comment}
\usepackage{textcomp}
\usetikzlibrary{decorations.pathmorphing,positioning}
\usetikzlibrary{calc,positioning}
\usetikzlibrary{decorations.markings}

\usepackage{hyperref}
\hypersetup{
	colorlinks=true,
	linkcolor=blue,
	citecolor=blue,
	urlcolor=blue}

\newcommand{\ZZ}{\mathbb{Z}}
\newcommand{\QQ}{\mathbb{Q}}
\newcommand{\RR}{\mathbb{R}}

\newcommand{\A}{\mathcal A}

\newcommand{\F}{\mathcal F}

\renewcommand{\L}{\mathcal L}
\newcommand{\M}{\mathcal M}

\renewcommand{\P}{\mathcal P}

\newcommand{\W}{\mathcal W}

\newcommand{\Mct}{\mathcal{M}_{g,n}^{ct}}
\newcommand{\Mbar}{\overline{\mathcal{M}}_{g,n}}

\newcommand{\deltasp}{\delta_{split}}
\newcommand{\deltal}{\delta_{loop}}

\newcommand{\GK}{\text{GK}^{bl,k}_{g,n}}
\newcommand{\GKzero}{\text{GK}^{bl,0}_{g,n}}
\newcommand{\GKtwo}{\text{GK}^{bl,2}_{g,n}}

\newcommand{\GKbartwo}{\overline{\text{GK}}^{bl,2}_{g,n}}
\newcommand{\GKbarzero}{\overline{\text{GK}}^{bl,0}_{g,n}}
\newcommand{\deltairr}{\delta_{irr}}

\newcommand{\GKbarzeroBM}{(\overline{\mathrm{GK}}^{bl,0}_g)^\vee}

\tikzset{
  gkvertex/.style={circle,draw,inner sep=1pt},
  every loop/.style={draw}
}

\usetikzlibrary{calc}

\newcommand{\attachedwheel}[5]{%
  \coordinate (#1-hub) at
    ($ (#3)+({#5*cos(#4)},{#5*sin(#4)}) $);

  \pgfmathtruncatemacro{\lastvertex}{#2-1}

  \foreach \k in {0,...,\lastvertex}{
    \coordinate (#1-\k) at
      ($ (#1-hub)
       +({#5*cos(#4+180+360*\k/#2)},
         {#5*sin(#4+180+360*\k/#2)}) $);
  }

  \foreach \k in {0,...,\lastvertex}{
    \pgfmathtruncatemacro{\nextvertex}{mod(\k+1,#2)}
    \draw[thick] (#1-\k)--(#1-\nextvertex);
    \draw[thick] (#1-hub)--(#1-\k);
  }

  \foreach \k in {0,...,\lastvertex}
    \fill (#1-\k) circle (1.25pt);

  \fill (#1-hub) circle (1.25pt);
}

\tikzset{
    smallvertex/.style={circle, draw, minimum size=14pt, inner sep=0pt},
    edge/.style={thick}
}

\tikzset{doublevertex/.style={circle, draw, double, minimum size=16pt, inner sep=0pt},  edge/.style={thick}
}

\newcommand{\graphab}{%
\tikz[baseline=-0.6ex]{
    \node[smallvertex] (a) at (0,0) {$a$};
    \node[smallvertex] (b) at (1.2,0) {$b$};

    \draw[edge] (a) -- (b);

    \draw[edge] ($(a)!0.5!(b)$) ++(0,-0.15) -- ++(0,0.3);

    \draw[edge] (a) -- ++(90:0.8);
    \draw[edge] (a) -- ++(150:0.8);
    \draw[edge] (a) -- ++(-150:0.8);

    \draw[edge] (b) -- ++(30:0.8);
    \draw[edge] (b) -- ++(-30:0.8);
    \draw[edge] (b) -- ++(-90:0.8);
}}

\newcommand{\graphabsl}{%
\tikz[baseline=-0.6ex]{
    \node[circle, draw, minimum size=14pt, inner sep=0pt] (a) at (0,0) {$a$};
    \node[circle, draw, minimum size=14pt, inner sep=0pt] (bp) at (1.1,0) {$b'$};
    \node[circle, draw, minimum size=14pt, inner sep=0pt] (bpp) at (2.2,0) {$b''$};

    \draw[thick] (a) -- (bp);
    \draw[thick] (bp) -- (bpp);
    \draw[edge] ($(a)!0.5!(b)$) ++(0,-0.15) -- ++(0,0.3);

    \draw[thick] (a) -- ++(90:0.65);
    \draw[thick] (a) -- ++(150:0.65);
    \draw[thick] (a) -- ++(-150:0.65);

    \draw[thick] (bp) -- ++(-90:0.65);

    \draw[thick] (bpp) -- ++(30:0.65);
    \draw[thick] (bpp) -- ++(-30:0.65);
}}

\newcommand{\graphabsr}{%
\tikz[baseline=-0.6ex]{
    \node[circle, draw, minimum size=14pt, inner sep=0pt] (a) at (0,0) {$a'$};
    \node[circle, draw, minimum size=14pt, inner sep=0pt] (bp) at (1.1,0) {$a''$};
    \node[circle, draw, minimum size=14pt, inner sep=0pt] (bpp) at (2.2,0) {$b$};

    \draw[thick] (a) -- (bp);
    \draw[thick] (bp) -- (bpp);
    \draw[edge] ($(bp)!0.5!(bpp)$) ++(0,-0.15) -- ++(0,0.3);

    \draw[thick] (a) -- ++(90:0.65);
    \draw[thick] (a) -- ++(150:0.65);

   \draw[thick] (bp) -- ++(-150:0.65);

    \draw[edge] (bpp) -- ++(30:0.8);
    \draw[edge] (bpp) -- ++(-30:0.8);
    \draw[edge] (bpp) -- ++(-90:0.8);
}}

\newcommand{\graphabpsil}{%
\tikz[baseline=-0.6ex]{
    \node[doublevertex] (a) at (0,0) {$a$};
    \node[smallvertex] (b) at (1.2,0) {$b$};

    \draw[edge] (a) -- (b);

    \draw[->,thick] ($(a)!0.28!(b)$) -- ($(a)!0.50!(b)$);

    \draw[edge] (a) -- ++(90:0.8);
    \draw[edge] (a) -- ++(150:0.8);
    \draw[edge] (a) -- ++(-150:0.8);

    \draw[edge] (b) -- ++(30:0.8);
    \draw[edge] (b) -- ++(-30:0.8);
    \draw[edge] (b) -- ++(-90:0.8);
}}

\newcommand{\graphdeltairrone}{%
\tikz[baseline=-0.6ex]{
    \node[circle, draw, double, minimum size=16pt, inner sep=0pt] (c) at (0,0) {$1$};
    \node[font=\small] at ($(c)+(0.30,0.50)$) {$\delta_{irr}$};

    \foreach \angle in {90,30,-30,-90,-150,150} {
        \draw[thick] (c) -- ++(\angle:0.8);
    }
}
}

\newcommand{\graphsplitonezerodeltairr}{%
\tikz[baseline=-0.6ex]{
    \node[smallvertex] (a) at (0,0) {$0$};
    \node[doublevertex] (b) at (1.2,0) {$1$};

    \draw[edge] (a) -- (b);

    \node[font=\small] at ($(b)+(0.10,0.55)$) {$\delta_{irr}$};

    \draw[edge] (a) -- ++(90:0.8);
    \draw[edge] (a) -- ++(150:0.8);
    \draw[edge] (a) -- ++(-150:0.8);

    \draw[edge] (b) -- ++(30:0.8);
    \draw[edge] (b) -- ++(-30:0.8);
    \draw[edge] (b) -- ++(-90:0.8);
}}

\newcommand{\graphabpsir}{%
\tikz[baseline=-0.6ex]{
    \node[smallvertex] (a) at (0,0) {$a$};
    \node[doublevertex] (b) at (1.2,0) {$b$};

    \draw[edge] (a) -- (b);

   \draw[->,thick] ($(a)!0.72!(b)$) -- ($(a)!0.50!(b)$);

    \draw[edge] (a) -- ++(90:0.8);
    \draw[edge] (a) -- ++(150:0.8);
    \draw[edge] (a) -- ++(-150:0.8);

    \draw[edge] (b) -- ++(30:0.8);
    \draw[edge] (b) -- ++(-30:0.8);
    \draw[edge] (b) -- ++(-90:0.8);
}}

\newcommand{\Mb}{\overline{\mathcal M}}
\newcommand{\Mbargn}{\overline{\mathcal M}_{g,n}}

\newtheorem{theorem}{Theorem}[section]
\newtheorem{proposition}[theorem]{Proposition}

\newtheorem{lemma}[theorem]{Lemma}
\newtheorem{corollary}[theorem]{Corollary}

\theoremstyle{definition}
\newtheorem{definition}[theorem]{Definition}
\newtheorem{example}[theorem]{Example}

\theoremstyle{remark}

\newtheorem{remark}[theorem]{Remark}

\author[A.~Bradford]{Amy Bradford}

\title[Low-weight cohomology of $\Mct$ and the Torelli morphism]{Low-weight compactly supported cohomology of $\Mct$ and the extended Torelli morphism}

\begin{document}

\begin{abstract}
We study the weight zero and weight two graded pieces of the compactly
supported cohomology of the moduli space
$\mathcal M_{g,n}^{ct}$ of stable $n$-pointed curves of genus $g$ of compact type and
show that they are computed by bridgeless graph complexes. In weight two, we construct a natural embedding of shifted weight zero
cohomology from lower genus. For unmarked curves, we construct canonical
classes represented by pinched odd wheel graphs and prove that distinct
wheel monomials give linearly independent classes in the dual of
$W_0H_c^{2g}(\mathcal M_g^{ct};\mathbb Q)$. By relating the
bridgeless graph complex to the $K$-theory of graphs and the Quillen
spectral sequence, we show that these classes remain linearly independent
after pushforward along the extended Torelli morphism
$\mathcal M_g^{ct}\to\mathcal A_g$. Finally, we prove that the
realizations of the motivic Lie algebra arising from $\mathcal M_g$
and $\mathcal A_g$ have the same image in the weight zero compactly supported cohomology
of $\mathcal M_g^{ct}$.
\end{abstract}

\maketitle

\section{Introduction}

Let $g,n\geq 0$ with $2g-2+n>0$, and let
$\mathcal M_{g,n}^{ct}$ denote the moduli space of stable
$n$-pointed curves of genus $g$ of compact type. A stable curve $C$ is of \emph{compact type} if the Picard variety of line bundles of fixed multidegree on $C$ is compact. Equivalently, $C$ is of compact type if its dual graph is a tree. Thus $\mathcal M_{g,n}^{ct}$ is a natural partial
compactification of $\mathcal M_{g,n}$
$$
\mathcal M_{g,n}
\subseteq
\mathcal M_{g,n}^{ct}
\subseteq
\overline{\mathcal M}_{g,n}.
$$

Deligne's mixed Hodge theory equips $H_c^k(\Mct;\QQ)$ with an increasing \textit{weight filtration};
see \cite[$\mathsection$3.2]{DeligneHodgeII} and
\cite[$\mathsection$8.2--8.3]{DeligneHodgeIII}. 
We write $$
\mathrm{gr}_j^WH_c^k(\Mct; \QQ)  := W_jH_c^k(\Mct; \QQ)  / W_{j-1}H_c^k(\Mct; \QQ) 
$$ 
for its \emph{weight $j$ graded piece}.

In this paper, we study the weight zero and the weight two graded pieces of this filtration. The key combinatorial feature is that the boundary complex of $\Mct \subset \Mbargn$ is indexed by bridgeless stable graphs; see Section \ref{subsec: dual complex of boundary}. Building on the weight zero graph complex model for $\M_{g,n}$
\cite{ChanGalatiusPayne2021,ChanGalatiusPayne2022} and the weight two analysis of \cite{PayneWillwacherWeight2}, we construct compact type analogs computing
$$
W_0H_c^\bullet(\mathcal M_{g,n}^{ct};\QQ)
\qquad\text{and}\qquad
\operatorname{gr}_2^W
H_c^\bullet(\mathcal M_{g,n}^{ct};\QQ).
$$
The weight zero model produces a polynomial family of classes represented by pinched odd wheel graphs, while the weight two model contains a naturally embedded copy of shifted weight zero cohomology in lower genus.

For $n=0$, the proper extended Torelli morphism
$$
\tau_g^{ct}: \M_{g}^{ct} \to \A_g
$$
makes $\M_g^{ct}$ a geometric bridge between $\M_g$ and the moduli space $\A_g$ of principally polarized abelian varieties of dimension $g$. We use this bridge to follow the odd wheel classes from $\M_g$ through $\M_g^{ct}$ to $\A_g$, and ultimately to compare the two cohomological realizations of the motivic Lie algebra. 

\subsection{Graph complexes in low-weight}

Our first main result makes precise the appearance of bridgeless graphs in the compact type setting.  Recall that a \textit{bridge} is an edge of a graph whose deletion increases the number of connected components. We apply Deligne's weight spectral sequence
\cite[$\mathsection$3.2]{DeligneHodgeII} to the inclusion
$\mathcal M_{g,n}^{ct}\subseteq
\overline{\mathcal M}_{g,n}$ and identify $\operatorname{gr}_k^WH_c^\bullet
(\mathcal M_{g,n}^{ct};\QQ)$
with the cohomology of a certain graph complex denoted
$\mathrm{GK}_{g,n}^{{bl},k}$; see Subsection \ref{subsec: GK graph complex}. Its generators are bridgeless stable graphs whose vertices
are decorated by classes in the cohomology of the corresponding
moduli spaces $\Mb_{h,m}$, with the vertex decorations having
total cohomological degree $k$. In Subsection \ref{subsec: Modular cooperads}, we
formulate this construction using the language of modular cooperads; see \cite{GetzlerKapranov1998} and
\cite[$\mathsection$2.5]{PayneWillwacherWeight2}.

In weight zero, we show the resulting model reduces to the dual of the marked analog of the simple bridgeless commutative graph complex studied in \cite{WillwacherBridgeless}. We denote this complex by $\GKbarzero$. This gives the compact type analog of the weight zero result in \cite{ChanGalatiusPayne2021}.

\begin{theorem}\label{thm:intro-weight zero-model}
For every stable pair $(g,n)$, there is a natural isomorphism
$$
W_0H_c^\bullet
\bigl(\mathcal M_{g,n}^{ct};\QQ\bigr)
\cong
H^\bullet
\bigl(\overline{\mathrm{GK}}_{g,n}^{{bl},0}\bigr).
$$

\end{theorem}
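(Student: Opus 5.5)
The plan is to follow the strategy of \cite{ChanGalatiusPayne2021}, adapted to the open inclusion $\Mct\subseteq\Mbargn$. The boundary $D=\Mbargn\smallsetminus\Mct$ is the union of those boundary strata whose dual graph has a cycle. Equivalently, it is the closure of $\deltairr$, which is a normal crossings divisor in the orbifold sense. The standard result from \cite[$\mathsection$3.2]{DeligneHodgeII} for a smooth proper Deligne--Mumford stack $\overline X$ with normal crossings boundary $D$ and complement $X$ then gives
\[
W_0H_c^k(X;\QQ)\cong \widetilde H^{k-1}\bigl(\Delta(D);\QQ\bigr),
\]
where $\Delta(D)$ is the boundary (dual) complex. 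Equivalently, the bottom row of Deligne's weight spectral sequence is the complex $\bigoplus_{|S|=p} H^0(\widetilde D_S;\QQ)$ with alternating-sign restriction maps. The cohomology of this row is $W_0H_c^\bullet(X;\QQ)$, since the spectral sequence degenerates at $E_2$.

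I will identify the strata of $\Mbargn$ that lie in $D$, together with the branches of $D$ through them. A stratum with dual graph $G$ has local branches of $\Mbargn\smallsetminus\M_{g,n}$ indexed by the edges of $G$. Only the non-bridge edges contribute branches of $D$, because smoothing a bridge never makes a cycle. So the $p$-fold self-intersection locus $\widetilde D^{(p)}$ is the disjoint union, over pairs $(G,S)$ with $S$ a set of $p$ non-bridge edges, of the corresponding stratum, modulo automorphisms. Deleting the bridges and contracting them shows that the connected components of $\widetilde D^{(p)}$ are indexed by stable graphs $G'$ in which every edge lies on a cycle. Concretely, we contract all edges of $G$ outside $S$; the result is bridgeless and has exactly $p$ edges. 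Each such component is irreducible, isomorphic to $\bigl(\prod_v \Mb_{g(v),n(v)}\bigr)/\mathrm{Aut}(G')$, so its $H^0$ is $\QQ$ or $0$ according to whether $\mathrm{Aut}(G')$ acts on the orientation $\det\QQ^{E(G')}$ trivially or not. This is exactly the description of the weight $0$ row of $\GK$ in Subsection~\ref{subsec: GK graph complex}, and it recovers $\GKzero$.

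Next, I will check that the Deligne differential, an alternating sum of restrictions to branches, matches the graph complex differential. Restricting from $\widetilde D^{(p)}$ to $\widetilde D^{(p+1)}$ corresponds to splitting a vertex of $G'$ by inserting a new edge. Only splittings whose new edge is not a bridge survive, because bridge insertions land outside $D$-branch indexing. So the differential is the bridgeless vertex-splitting differential, with signs coming from the orientation of the edge set. This establishes $W_0H_c^\bullet(\Mct)\cong H^\bullet(\GKzero)$, with the degree shift fixed as in \cite{ChanGalatiusPayne2021}.

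The final step, which I expect to be the main obstacle, is passing from $\GKzero$ to the simplified complex $\GKbarzero$, the dual of the marked simple bridgeless commutative graph complex. In the weight zero complex of \cite{ChanGalatiusPayne2021}, the vertices carry genera, and one shows that the subcomplex of graphs with a positive-genus vertex or with loops or multiple edges of certain kinds is acyclic, via a filtration and a homotopy. I would try the same route. First I would filter by the total vertex genus and the number of tadpoles. Then I would construct a contracting homotopy that pairs a graph having a positive-genus vertex with the graph in which a genus-one vertex is replaced by a tadpole, which is the insertion of a $\deltairr$-loop. The concern is that these homotopies must preserve bridgelessness. Tadpoles and genus-one vertices are bridgeless-compatible, but the pairing used for multiple edges may create or destroy bridges. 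So I would verify acyclicity on each associated graded piece carefully, possibly citing the bridgeless analysis of \cite{WillwacherBridgeless}.
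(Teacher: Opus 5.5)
Your first two steps match how the paper sets things up: the proposition identifying the boundary complex of $\Mct\subset\Mbar$ with bridgeless graphs, and the identification of the weight zero row of Deligne's spectral sequence with $\GKzero$. One small correction there: the differential on that row is the bridgeless splitting differential plus $\deltal$, not splitting alone.

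The gap is in your last step, which is where the actual content of the theorem lies, and the plan you sketch would not work as stated. Both statistics you propose to filter by are preserved by $\deltasp$ and changed by $\deltal$. Splitting preserves total vertex genus, and a splitting that separates the two half-edges of a loop creates a pair of parallel edges and vanishes. So the associated graded carries $\deltasp$. The homotopy you describe, trading a positive-genus vertex for a tadpole, is a contraction for $\deltal$ instead, and you only describe it for genus-one vertices. Your worries about multiple edges, and the appeal to \cite{WillwacherBridgeless}, are beside the point. In weight zero every decoration is $1\in H^0$, so two parallel edges give an odd automorphism and the graph is already zero. Willwacher's complex has no vertex genera or tadpoles, so it does not address this reduction.

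The missing idea is to filter by the number of vertices. Since $\deltal$ preserves this number and $\deltasp$ raises it by one, the $E_0$-differential is $\deltal$ alone. This differential preserves the loopless core (contract loops, raising vertex genera), which is again stable and bridgeless, so bridges play no role at this stage. For a fixed core $\Gamma$, the summand is $\bigl(\bigotimes_{v}V^0_{g_v,n_v}\otimes\QQ[-1]^{\otimes|E(\Gamma)|}\bigr)_{\mathrm{Aut}(\Gamma)}$.

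For $g\geq1$, the single-vertex complex $V^0_{g,n}$ is just $\QQ\xrightarrow{\sim}\QQ[-1]$ (no loop to one loop), because two or more loops at a vertex are killed by the odd automorphism swapping them. For $g=0$ it is $V^0_{0,n}=\QQ$. K\"unneth together with exactness of coinvariants then identifies the $E_1$-page with $(\GKbarzero,\deltasp)$ via the quotient map. The comparison theorem for finite filtrations gives the quasi-isomorphism. As written, your proposal only establishes $W_0H_c^\bullet(\Mct)\cong H^\bullet(\GKzero)$.
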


We next turn to weight two. Since
$H^1(\overline{\mathcal M}_{h,m};\QQ)=0$
for every stable pair $(h,m)$, the weight one graded piece vanishes. 
Thus the weight two graded piece is the first potentially nonzero contribution beyond weight zero. 

To analyze it, we adapt the methods of \cite{PayneWillwacherWeight2} to the bridgeless setting, using the descriptions
and pullback formulas for divisor classes on
$\overline{\mathcal M}_{h,m}$ from
\cite{ArbarelloCornalba1998,Keel1992}. We find that the weight two bridgeless graph complex has two contributions. 
One arises from genus one vertices decorated by the irreducible boundary divisor $\delta_{irr}$, and the other is encoded by a reduced
genus zero graph complex, denoted
$\overline{\mathrm{GK}}_{g,n}^{{bl},2}$.
The precise construction is given in Section~\ref{sec: weight two compactly supported cohomology of Mct}.

Our second main theorem separates these contributions and identifies the genus one term with weight zero cohomology.

\begin{theorem}\label{thm:intro-weight two-model}
Let $g\geq1$ and $2g+n>3$. For every $k$, there is an
isomorphism 
$$
\operatorname{gr}_2^W
H_c^k
\bigl(\mathcal M_{g,n}^{ct};\QQ\bigr)
\cong
W_0H_c^{k-2}
\bigl(\mathcal M_{g-1,n+1}^{ct};\QQ\bigr)
\oplus
H^k
\bigl(\overline{\mathrm{GK}}_{g,n}^{{bl},2}\bigr).
$$
\end{theorem}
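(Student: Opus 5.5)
Following \cite{PayneWillwacherWeight2}, the plan is to start from the weight spectral sequence identification $\operatorname{gr}_2^W H_c^\bullet(\Mct;\QQ)\cong H^\bullet(\GKtwo)$, which is the weight two case of the general identification described above. The generators of $\GKtwo$ are bridgeless stable graphs with exactly one vertex $v$ carrying a degree two decoration in $H^2(\Mb_{g(v),n(v)};\QQ)$, all other vertices decorated by $1$. First I will use the Arbarello--Cornalba description of $H^2(\Mb_{h,m};\QQ)$. It is spanned by $\kappa_1$, the $\psi_i$, $\deltairr$ (for $h\ge1$) and the separating boundary divisors $\delta_{a,S}$. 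Modulo the relations this gives a convenient basis. I will then filter $\GKtwo$ by whether the decorated vertex is a genus one vertex carrying $\deltairr$ (with its remaining decorations trivial), versus everything else.

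Second, I will analyze the differential. It splits vertices, and on a decorated vertex it acts through the pullback of the decoration to the boundary strata, via Keel and Arbarello--Cornalba pullback formulas. For decorations other than $\deltairr$, I will use the relations to reduce to the genus zero situation. On higher genus vertices with $\kappa,\psi,\delta$ decorations, the standard argument of \cite{PayneWillwacherWeight2} applies: a homotopy contracting the positive-genus vertices, coming from $W_0$-type acyclicity, kills those contributions. The survivors are genus zero vertices decorated by $\psi$/boundary classes, which is exactly the reduced complex $\GKbartwo$. The point to check here is that restricting to bridgeless graphs is compatible with each step. Splitting a vertex creates a new edge, and a contracting homotopy must never produce a bridge; I expect the bridgeless condition to be preserved, because the relevant edges lie on cycles through the decorated vertex.

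Third, I will identify the $\deltairr$ part. A genus one vertex decorated by $\deltairr$ corresponds geometrically to the pushforward from $\Mb_{0,m+2}$ gluing two points. Replacing the vertex by a genus zero vertex with a self-loop (a loop is never a bridge) gives a map to graphs of total genus $g$. Alternatively, I will remove the decoration and regard the vertex as genus zero with one extra marked leg. This gives a degree $+2$ chain map from $\GKbarzero$ for $(g-1,n+1)$: the $\deltairr$-vertex of genus one becomes a genus zero vertex carrying a new leg $n+1$, lowering $g$ by one. Bridgelessness corresponds exactly, since attaching or detaching a leg does not change bridges. The degree shift of $2$ comes from the decoration degree, and Theorem~\ref{thm:intro-weight zero-model} then identifies the cohomology of this part with $W_0H_c^{k-2}(\M_{g-1,n+1}^{ct};\QQ)$.

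The main obstacle is showing that the $\deltairr$-subcomplex and $\GKbartwo$ split as a direct sum of complexes, and not merely as an extension. The differential of a $\deltairr$-decorated vertex pulls back $\deltairr$ to both the irreducible and separating components, and this could map into the other piece. I would first try to define the splitting at the level of decorations using the Keel/AC basis in which $\deltairr$ pulls back to $\deltairr$ classes on the factors, so that the cross terms vanish. If this fails, I would exhibit an explicit chain-level projection and show that the connecting map in the long exact sequence vanishes. The hypotheses $g\ge1$ and $2g+n>3$ are what ensure that $(g-1,n+1)$ is stable and that the genus one vertex exists.
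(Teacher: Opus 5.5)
You correctly flag the splitting as the main obstacle, but you leave it open, and you place the obstruction on the wrong side. The span $\Delta^{\mathrm{irr}}_{g,n}$ of loopless graphs with one genus one vertex decorated by $\deltairr$ and all other vertices of genus zero is already a subcomplex of $\GKtwo$. Indeed $\theta^*\deltairr=\deltairr$, and in genus one $\xi^*\deltairr=-\psi_t-\psi_{t'}+\sum_{t\in A,\,t'\notin A}\delta_{0,A}=0$ on $\Mb_{0,m+2}$, by the genus zero relation for $\psi_t+\psi_{t'}$. So nothing leaks out of the $\deltairr$-piece.

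What can fail is the connecting map $H^k(\GKbartwo)\to H^{k+1}(\Delta^{\mathrm{irr}}_{g,n})$. This is the $d_2$ of the vertex-count spectral sequence, whose $E_1$-page consists of exactly these two rows. To lift a $d_1$-cocycle $x$ from the $\GKbartwo$ row, you need a correction $y$ with $\deltal y=-\deltasp x$; it can be taken loopless with exactly one genus one vertex. Then $\deltasp y$ is a $\deltal$-cycle, and $\ker\bigl(\xi^*\colon H^2(\Mb_{1,m})\to H^2(\Mb_{0,m+2})_{\mathbb S_2}\bigr)=\QQ\deltairr$, so $\deltasp y\in\Delta^{\mathrm{irr}}_{g,n}$. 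Nothing forces it to vanish.

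Your first remedy fails. Splitting decorations by the $\deltairr$-coordinate in the Arbarello--Cornalba basis $\{\deltairr,\delta_{0,A}\}$ of $H^2(\Mb_{1,S})$ does not commute with vertex splitting. For a splitting $S=U\sqcup T$ of the special vertex, $\theta_{U,T}^*(\delta_{0,U})=-\psi_q$. The genus one relation $12\psi_q=\deltairr+12\sum_{q\in S'\subseteq R,\,|S'|\geq2}\delta_{0,S'}$ gives this a $\deltairr$-coefficient of $-1/12$, whereas $\delta_{0,U}$ has coefficient $0$.

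The missing idea is the corrected functional $\lambda_S(\deltairr)=1$, $\lambda_S(\delta_{0,S})=-1/12$, and $\lambda_S(\delta_{0,A})=0$ for $A\subsetneq S$. Set $R=T\cup\{q\}$. Bridgelessness forces $T\neq\emptyset$, so $\delta_{0,R}$ exists and occurs in the relation for $\psi_q$. Hence $\lambda_R(\psi_q)=0$ and $\lambda_R\circ\theta_{U,T}^*=\lambda_S$. This gives a retraction of complexes $\rho$ from the loopless graphs with one genus one vertex onto $\Delta^{\mathrm{irr}}_{g,n}$. Then $x+y-\rho(y)$ is an honest cocycle lifting $x$, so $d_2=0$. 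Without this, or an equivalent explicit construction, you only get $H(\GKtwo)$ up to the kernel and cokernel of that connecting map, not the stated direct sum.

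There are also some smaller issues.

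\begin{itemize}
\item If your initial filtration uses all graphs whose special vertex has genus one and decoration $\deltairr$, with loops and positive genus allowed elsewhere, that subcomplex has the wrong cohomology. For $(g,n)=(2,0)$, the genus one $\deltairr$-vertex with a self-loop is a nonzero class there. In $\GKtwo$ it is the coboundary of a decorated genus two vertex, by acyclicity of $V^2_{2,0}$, while $\overline{\mathrm{GK}}^{bl,0}_{1,1}=0$.
\item The reduction should go through the vertex-count filtration and the single-vertex complexes, where $H(V^2_{1,m})\cong\QQ\deltairr\oplus\operatorname{coker}\xi^*$ produces both rows. The survivors also include genus zero special vertices carrying one loop, with decoration in $\operatorname{coker}\xi^*$; your description of $\GKbartwo$ omits these.
\item Your third step matches the paper's identification $\Delta^{\mathrm{irr}}_{g,n}\cong\overline{\mathrm{GK}}^{bl,0}_{g-1,n+1}[-2]$. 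However, stability and bridgelessness correspond only jointly. The splitting in which the genus one vertex receives no original half-edge gives a bridge on one side and an unstable vertex on the other.
\end{itemize}
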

Although the direct sum splitting is noncanonical, our construction isolates the weight zero term naturally. In particular, we obtain an injection
$$
W_0H_c^k
\bigl(\mathcal M_{g-1,n+1}^{ct};\QQ\bigr)
\hookrightarrow
\operatorname{gr}_2^W
H_c^{k+2}
\bigl(\mathcal M_{g,n}^{ct};\QQ\bigr).
$$

Thus every nonvanishing result or dimension bound in weight zero produces a
corresponding result in weight two.

\begin{remark}
For comparison, \cite[Theorem~1.2]{PayneWillwacherWeight2} expresses
$\operatorname{gr}_2^W H_c^\bullet(\mathcal M_g;\QQ)$ as a direct sum of
four terms constructed from weight zero cohomology. In the compact type setting, the
reduced complex
$\overline{\mathrm{GK}}_{g,n}^{{bl},2}$
remains as an additional contribution that we do not identify entirely in
terms of weight zero. Nevertheless, the naturally embedded weight zero term isolated above is
already sufficient to transfer nonvanishing results, dimension bounds, and
distinguished graph classes from weight zero to weight two.
\end{remark}

When $n=0$, we combine Theorem~\ref{thm:intro-weight two-model} with the injectivity of pullback along the
universal curve 
$
\mathcal M_{g,1}^{ct}
\longrightarrow
\mathcal M_g^{ct}
$;
see \cite{PayneWillwacherWeight2}. This gives the following corollary.
\begin{corollary}
    For $g \geq 2$, there is an injection
    $
    W_0H_c^k
\bigl(\mathcal M_g^{ct};\QQ\bigr)
\hookrightarrow
\operatorname{gr}_2^W
H_c^{k+2}
\bigl(\mathcal M_{g+1}^{ct};\QQ\bigr).
    $
\end{corollary}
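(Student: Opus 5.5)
The plan is to combine two injections. Apply Theorem~\ref{thm:intro-weight two-model} with genus $g+1$ and $n=0$. For $g\geq 2$ we have $g+1\geq 1$ and $2(g+1)>3$, so the hypotheses hold, and the theorem gives a natural injection
$$
W_0H_c^k\bigl(\mathcal M_{g,1}^{ct};\QQ\bigr)\hookrightarrow \operatorname{gr}_2^W H_c^{k+2}\bigl(\mathcal M_{g+1}^{ct};\QQ\bigr).
$$
This is the inclusion of the first summand. It is the naturally isolated weight zero term, coming from genus one vertices decorated by $\delta_{irr}$; the noncanonical splitting plays no role. It therefore suffices to produce an injection
$$
W_0H_c^k\bigl(\mathcal M_g^{ct};\QQ\bigr)\hookrightarrow W_0H_c^k\bigl(\mathcal M_{g,1}^{ct};\QQ\bigr)
$$
and compose the two.

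For the second map I will use pullback along the forgetful map $\pi\colon \mathcal M_{g,1}^{ct}\to \mathcal M_g^{ct}$. Forgetting a marked point and stabilizing preserves compact type, since contracting a genus zero vertex of a tree leaves a tree. Moreover $\pi$ is the restriction of the proper map $\overline{\mathcal M}_{g,1}\to\overline{\mathcal M}_g$, with $\pi^{-1}(\mathcal M_g^{ct})=\mathcal M_{g,1}^{ct}$. Hence $\pi$ is proper, and $\pi^*$ on compactly supported cohomology is a morphism of mixed Hodge structures preserving $W_0$. The degree is also preserved, which is why $k$ and not $k+2$ appears on this side.

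For injectivity, the universal curve is a proper map of relative dimension one. I will follow the argument of \cite{PayneWillwacherWeight2} for $\mathcal M_g$. One option is to use that pushforward along $\pi$ composed with multiplication by a relative class of degree $2g-2$, such as $\psi_1$, is a nonzero multiple of the identity. Since $\pi^*$ is strictly compatible with weights, this gives injectivity on $W_0$. A second option is to work directly with the model of Theorem~\ref{thm:intro-weight zero-model}: there $\pi^*$ corresponds to summing over ways of attaching the marked leg to a vertex, and this sum admits a left inverse.

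The main obstacle is this injectivity on weight zero. In the projection-formula argument, $\psi_1$ has weight two, so the argument must pass through $\operatorname{gr}_2^W H_c^{k+2}(\mathcal M_{g,1}^{ct})$ and back down. One has to check that the composite is genuinely a multiple of the identity on $W_0$, using strictness of morphisms of mixed Hodge structures. If that check is awkward, I would instead verify the combinatorial left inverse on $\overline{\mathrm{GK}}^{bl,0}$. This amounts to showing that the leg-attachment map is split injective at the chain level, compatibly with the differential. Bridgelessness is preserved here, since attaching a leg creates no bridges.
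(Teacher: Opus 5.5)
Your proposal is correct and follows the paper's own argument. You apply Theorem~\ref{thm:intro-weight two-model} in genus $g+1$ with $n=0$, take the naturally embedded $\Delta^{\mathrm{irr}}_{g+1,0}$-summand $W_0H_c^k(\mathcal M_{g,1}^{ct};\QQ)\hookrightarrow\operatorname{gr}_2^WH_c^{k+2}(\mathcal M_{g+1}^{ct};\QQ)$, and precompose with the injectivity of pullback along the universal curve $\mathcal M_{g,1}^{ct}\to\mathcal M_g^{ct}$, which the paper cites from \cite{PayneWillwacherWeight2}. The obstacle you anticipate does not actually arise: the projection formula gives $\pi_*(\psi_1\cup\pi^*\alpha)=(2g-2)\alpha$ for every $\alpha\in H_c^k(\mathcal M_g^{ct};\QQ)$, so $\pi^*$ is injective on all of $H_c^k$ and therefore on $W_0$, with no strictness argument or combinatorial left inverse needed.
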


To put this injection to use, we next study weight zero cohomology by
comparing $\mathcal M_g^{{ct}}$ with $\mathcal M_g$ and
$\mathcal A_g$.

\subsection{Comparison with $\mathcal M_g$}

The weight zero compactly supported cohomology of $\M_g$ is governed by the commutative graph complex. More precisely,
\cite{ChanGalatiusPayne2021} gives natural isomorphisms
\begin{equation}\label{eq: Classical weight zero is GC2}
W_0H_c^{2g+k}(\mathcal M_g;\QQ)
\cong
H^k(\mathsf{GC}_2)^{(g)}.
\end{equation}

Graph homology therefore produces large families of unstable
cohomology classes on $\M_g$. The odd wheel classes occur at the
bottom of the possible range. Indeed,
$$
H_c^i(\M_g; \QQ)=0
\quad\text{for }i<2g,
\qquad\text{and}\qquad
H^k(\mathsf{GC}_2)=0
\quad\text{for }k<0.
$$
The first vanishing follows from Harer's virtual cohomological dimension
calculation together with vanishing in the virtual cohomological
dimension
\cite{Harer1986,ChurchFarbPutman2012,MoritaSakasaiSuzuki2013}.
The second is a theorem of Willwacher
\cite[Theorem~1.1]{Willwacher2015} and, through
\eqref{eq: Classical weight zero is GC2}, gives an independent proof of
the corresponding weight zero vanishing.

For every odd $g\geq3$, the wheel graph $W_g$ represents a nonzero
class in the homology of the dual graph complex
\cite[$\mathsection$9]{Willwacher2015}; see also
\cite[Theorem~2.6]{ChanGalatiusPayne2021}. Our next result identifies the smallest degree of nonvanishing
for the compactly supported cohomology of compact type.

\begin{theorem}\label{Thm: intro M_g comparison}
    For every $g \geq 2$, the smallest degree $k$ for which $H_c^k(\M_g^{ct};\QQ)$ is nonzero is $k=2g$.
\end{theorem}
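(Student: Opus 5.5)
Two things need to be shown: the vanishing $H_c^k(\M_g^{ct};\QQ)=0$ for $k<2g$, and the nonvanishing $H_c^{2g}(\M_g^{ct};\QQ)\neq 0$.

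For the vanishing, I will use the stratification of $\M_g^{ct}$ by topological type. The strata are indexed by stable trees $T$ of total genus $g$. Since there are no markings and the graph is a tree, every leaf vertex has genus at least $1$. The stratum of $T$ is the quotient
$$\Bigl(\prod_{v\in V(T)}\M_{g_v,n_v}\Bigr)\Big/\mathrm{Aut}(T).$$
The open--closed long exact sequences for compactly supported cohomology reduce the vanishing to showing $H_c^i(\text{stratum})=0$ for $i<2g$. By Künneth, and after taking $\mathrm{Aut}(T)$-invariants, this in turn reduces to bounding the lowest nonvanishing degree of $H_c^\bullet(\M_{h,m};\QQ)$ for each vertex.

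By Harer's virtual cohomological dimension computation and Poincaré duality, $H_c^i(\M_{h,m};\QQ)=0$ for $i<\dim_\RR\M_{h,m}-\mathrm{vcd}$. This lower bound equals $2h+m$ for $h\geq1$ once $m\geq1$, equals $2h$ for $m=0$, and equals $m-2$ for $h=0$. For $h\ge1$, $m\ge1$ the bound can be improved to $2h+m-1$ using vanishing in the vcd \cite{ChurchFarbPutman2012,MoritaSakasaiSuzuki2013}. I would use the conservative bound $2h+m-2\ge 2h-1$ and check the sum over vertices.

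I expect this sum to be the main obstacle. Writing the per-vertex bound as $2g_v+(n_v-2)$ and summing gives
$$\sum_v\bigl(2g_v+n_v-2\bigr)=2g+2|E|-2|V|=2g-2,$$
which falls short by $2$. I therefore need the sharper bounds: $2h$ for $m=0$ (the single-vertex tree, which is just $\M_g$ itself), and $2h+m-1$ for $h\ge1$, $m\ge 1$. With these, each positive-genus vertex gains $+1$ over the naive bound. Genus-zero vertices have valence at least $3$ and contribute at least $1$, which is already $\ge n_v-2$.

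Counting the gains: a tree has at least two leaves, all of positive genus, so the total gain is at least $2$ and the sum is at least $2g$. One must handle carefully the case where the improvement for $h\ge1$, $m=1$ fails to hold. For instance, $\M_{1,1}$ has $H_c^2\neq0$, which equals $2h+m-1=2$, so the bound is consistent there. I would verify each small case ($h=1$ and $h=2$ with small $m$) directly from known cohomology.

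For nonvanishing in degree $2g$, I will use Theorem~\ref{thm:intro-weight zero-model} with $n=0$. It suffices to exhibit a nonzero class in $H^{2g}(\overline{\mathrm{GK}}^{bl,0}_{g,0})$, i.e.\ the degree-$2g$ part of the bridgeless graph complex, which is dual to the bridgeless version of $\mathsf{GC}_2$ (cf.\ \cite{WillwacherBridgeless}).

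For odd $g\ge3$, I would take the wheel class $W_g$, which is bridgeless, and show it survives. The natural tool is the map $W_0H_c^\bullet(\M_g^{ct})\to W_0H_c^\bullet(\M_g)$, or rather its dual. Alternatively, one can use the quotient map from the full graph complex onto bridgeless graphs, which is a chain map because bridged graphs span a subcomplex. For even $g$, and for $g=2$, wheel classes do not exist, so I would instead produce a pinched or glued wheel monomial: wedge together odd wheels of total loop order $g$ at a vertex, or use a genus-one vertex. The candidate is the pinched odd wheel classes mentioned in the abstract. As a fallback for small $g$, I would check directly that $H_c^{2g}(\M_2^{ct})\ne0$, for example via the stratum $\M_{1,1}\times\M_{1,1}$: its contribution is $H_c^2\otimes H_c^2$ in degree $4$, and it is not killed by the boundary maps because it is the closed minimal stratum.

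Establishing nonvanishing uniformly in all genera is the second delicate step. If the graph-theoretic route stalls, I would instead use the closed stratum whose tree is a star of $g$ genus-one leaves attached to genus-zero vertices. This stratum is closed and proper over its image, and I would show its top class maps nontrivially.
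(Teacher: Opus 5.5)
Your vanishing argument has the same shape as the paper's: stratify by stable trees, apply K\"unneth and per-vertex vanishing ranges, and sum over the tree. The paper stratifies only $D_g=\M_g^{ct}\setminus\M_g$ and then uses the localization sequence together with $H_c^k(\M_g)=0$ for $k<2g$, which is equivalent. However, your per-vertex inputs are wrong, and your count reaches $2g$ only because of this. Since $\dim_{\mathbb R}\M_{0,m}=2m-6$ and the vcd is $m-3$, you only get $H_c^i(\M_{0,m})=0$ for $i<m-3$, and this is sharp ($\M_{0,3}$ is a point). For $h,m\geq1$, Harer gives $2h-2+m$. The extra $+1$ from vanishing in the vcd is available only for $m\leq1$, as in \cite[Proposition~2.1]{BergstromFaberPayne2024}; for $h=1$, $m\geq3$ it is false, since $H_c^m(\M_{1,m})\neq0$. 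That second error is harmless, because you only use the gain at the leaves. The first is not. Each genus-zero vertex costs one relative to the naive bound $2g_v+n_v-2$, so the correct total is $2g-2+L(T)-V_0(T)$. What you actually need is $L(T)\geq V_0(T)+2$, which follows from $\sum_v(n_v-2)=-2$ together with $n_v\geq3$ at genus-zero vertices. There is no slack here: for one trivalent genus-zero vertex with three elliptic tails ($g=3$), your bounds predict $H_c^i=0$ for $i<7$, whereas the stratum $[\M_{0,3}\times\M_{1,1}^3/S_3]$ has $H_c^6\cong\QQ$.

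The real gap is nonvanishing, and the weight-zero graph route cannot cover all $g\geq2$. For $g=2$ the only candidate generator of the reduced bridgeless complex is the theta graph, which has an odd automorphism, so $W_0H_c^\bullet(\M_2^{ct})=0$. For $g=2,4$ there are no partitions into odd parts $\geq3$. A genus-one vertex does not help, since the weight-zero single-vertex complex is acyclic in positive genus. So nothing in your proposal handles $g=4$. For odd $g$ the wheel argument is fine. For even $g\geq6$, however, you give no reason why a pinched wheel class is nonzero, and that is the hard part. The wedge $W_h\vee W_{h'}$ is a boundary in the ordinary graph complex (contract the bridge joining the two hubs), so nothing transfers from $\M_g$; the paper needs the Quillen spectral sequence and \cite[Theorem~7.4]{BrownChanGalatiusPayneHopf} precisely here. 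Your $g=2$ fallback reaches the right conclusion, but being the closed stratum does not make the connecting map $H_c^4(D_2)\to H_c^5(\M_2)$ vanish. You need $H_c^5(\M_2)=0$, e.g.\ from $H^\bullet(\M_2;\QQ)=\QQ$.

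The missing idea is the paper's: by Poincar\'e duality $H_c^{2g}(\M_g^{ct})\cong H^{4g-6}(\M_g^{ct})^\vee$, and the tautological ring of $\M_g^{ct}$ has nonzero socle in that degree \cite{FaberPandharipandeRelativeMaps,GraberVakilRelativeLocalization}. Your last fallback actually names a suitable cycle. The stratum $Z$ with $g$ elliptic tails on a trivalent genus-zero tree has dimension $g$, so $[Z]\in H^{4g-6}(\M_g^{ct})$. It is nonzero because $\int_{\overline Z}\lambda_g$ is a nonzero multiple of $24^{-g}$. Meanwhile, by purity every class on $\Mb_g$ vanishing on $\M_g^{ct}$ is pushed forward from $\Mb_{g-1,2}$, where $\lambda_g$ pulls back to zero. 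Without an argument of this kind, ``show its top class maps nontrivially'' remains unproved.
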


Let $i: \M_g \hookrightarrow \M_g^{ct}$ denote the open inclusion. The vanishing in the theorem follows directly from showing the natural map
$$
i_!\colon
H_c^k(\mathcal M_g;\QQ)
\longrightarrow
H_c^k(\mathcal M_g^{ct};\QQ)
$$
is an isomorphism for $k<2g$ and is injective for $k=2g$. Indeed, 
$
H^k_c(\M_g;\QQ)=0
$
for $k <2g$, and hence the same is true for compact type. Nonvanishing in degree $2g$ follows by Poincaré duality from the
nonzero socle of the tautological ring in cohomological degree $4g-6$
\cite[Proposition~3]{FaberPandharipandeRelativeMaps};
see also
\cite[Theorem~$\star$ and $\mathsection$5.6]
{GraberVakilRelativeLocalization}.

For every odd $g\geq3$, the injectivity of $i_!$ in degree
$2g$ also shows that cohomology classes detected by the odd
wheel graph remain nonzero on $\mathcal M_g^{ct}$. We strengthen this in
Theorem~\ref{thm:intro-wheel-polynomial} by showing that distinct odd
wheel monomials define linearly independent classes in the homology of
the dual bridgeless graph complex. In particular, these classes give
another proof of nonvanishing in degree $2g$ for $g=3$ and every
$g\geq5$. To prove this stronger result, we compare
$\mathcal M_g^{ct}$ with $\mathcal A_g$ through the extended
Torelli map.

\subsection{The extended Torelli map} 

The Torelli map
$$
\tau_g: \M_g \to \A_g
$$
sends a smooth curve to its Jacobian. Since $\tau_g$ is not proper, it does not give rise to a natural morphism of mixed Hodge structures between $H_c^*(\M_g)$ and $H_c^*(\A_g)$. However, the Torelli map factors as 
$$
\M_g \xrightarrow[]{i} \M_g^{ct} \xrightarrow[]{\tau_g^{ct}} \A_g.
$$
Here $i: \M_g \to \M_g^{ct}$ is the open inclusion and $\tau_g^{ct}$ is a proper extension of $\tau_g$. This gives rise to a zigzag of mixed Hodge structures 
\begin{equation}\label{Comparison Map Mg to Ag}
    H_c^*(\M_g; \QQ) \xrightarrow[]{i_!} H_c^*(\M_g^{ct};\QQ) \xleftarrow[]{(\tau_g^{ct})^*} H_c^*(\A_g;\QQ).
\end{equation}

The weight zero compactly supported cohomology of $\M_g$ and
$\A_g$ has been studied extensively; see
\cite{ChanGalatiusPayne2021,ChanGalatiusPayne2022,
BrandtEtAlTopWeightAg}. Several structural results for $\A_g$ parallel known results
for $\M_g$, although neither collection of results appears to follow
directly from the other. The zigzag \eqref{Comparison Map Mg to Ag} suggests a geometric comparison, but the limited understanding of the middle term $\M_g^{ct}$ had previously prevented this comparison from being made effective; see 
\cite[$\mathsection$1.5]{BrownChanGalatiusPayneHopf}. In Section \ref{sec: Comparing Mg and Ag through compact type}, we use the bridgeless graph model to make this comparison explicit on the subspaces generated by odd wheel
classes. This both reveals new structure in $W_0H_c^*(\M_g^{ct})$ and allows us to trace the wheel classes between the curve and abelian sides.

\subsubsection{Polynomial family of wheel classes}

For an odd integer $h\geq3$, let $W_h$ denote the $h$-wheel. Let
$\lambda=(h_1,\ldots,h_r)$ be a partition of $g$ into odd integers
$h_i\geq3$. A \emph{pinched wheel graph of type $\lambda$} is any graph
obtained from
$$
W_{h_1}\sqcup\cdots\sqcup W_{h_r}
$$
by repeatedly identifying a vertex of one connected component with
a vertex of another until the resulting graph is connected. Equivalently, these identifications can be encoded by a tree on the wheel factors together with a choice of attachment vertex for each incidence; see Definition \ref{def:wheel-gluing-datum}. In Section \ref{subsec: Polynomial classes generated by odd wheels}, we prove every such graph is a cycle in
the dual bridgeless graph complex and that any two pinched wheel graphs of the same type are homologous. Consequently, each wheel monomial $W_{h_1}\cdots W_{h_r}$ determines a canonical class
$$
[W_\lambda]
\in
\left(
W_0H_c^{2g}
(\mathcal M_g^{ct};\QQ)
\right)^\vee;
$$
see Proposition \ref{prop:pinched-wheel-independence}. 
Let
$$
\mathcal W:=\QQ[W_3,W_5,W_7,\ldots]
$$
be the genus-graded polynomial algebra generated by the odd wheel graphs,
with $W_h$ in genus $h$, and let $\mathcal W_g$ denote its
genus $g$ summand. Independence of the pinching data produces a well-defined linear map
$$
\Phi_g:\W_g\to (W_0H_c^{2g}(\M_g^{ct};\QQ))^\vee, \qquad W_{h_1}\cdots W_{h_r} \mapsto [W_\lambda].
$$
Our next theorem shows these classes satisfy no further linear relations.

\begin{theorem}\label{thm:intro-wheel-polynomial} Together with the assignment $\Phi(1)=1$, the maps $\Phi_g$
assemble into an injective genus graded linear map
$$
\Phi\colon
\QQ[W_3,W_5,W_7,\ldots]
\hookrightarrow \QQ\oplus
\bigoplus_{g\geq 2}\left(
W_0H_c^{2g}
(\mathcal M_g^{ct};\QQ)
\right)^\vee.
$$
\end{theorem}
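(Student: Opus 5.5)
The plan is to detect the classes $[W_\lambda]$ by pairing them against classes pulled back from $\A_g$ along the extended Torelli morphism $\tau_g^{ct}$. Since $\tau_g^{ct}$ is proper, it induces $(\tau_g^{ct})^*\colon W_0H_c^{2g}(\A_g;\QQ)\to W_0H_c^{2g}(\M_g^{ct};\QQ)$. Dually, it gives a pushforward
$$
(\tau_g^{ct})_*\colon \bigl(W_0H_c^{2g}(\M_g^{ct};\QQ)\bigr)^\vee\to \bigl(W_0H_c^{2g}(\A_g;\QQ)\bigr)^\vee .
$$
It therefore suffices to show that the composite $(\tau^{ct})_*\circ\Phi$ is injective. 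Because the maps $\Phi_g$ respect the genus grading, injectivity can be checked genus by genus. The case $g=0$ is the assignment $\Phi(1)=1$.

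First, I will use Theorem~\ref{thm:intro-weight zero-model} to identify $(W_0H_c^{2g}(\M_g^{ct}))^\vee$ with the bottom-degree homology of the dual bridgeless graph complex. Under this identification, $[W_\lambda]$ is represented by any pinched wheel graph of type $\lambda$. Second, I will build a chain-level model for $(\tau_g^{ct})_*$ on the weight zero side. On the boundary complexes, a stable curve of compact type maps to the product of the Jacobians of its components. A bridgeless graph $G$ therefore goes to the tropical abelian variety, or equivalently the quadratic form, given by its cycle lattice $H_1(G)$.

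The point of the pinching construction is that $H_1$ of a wedge of graphs is the orthogonal direct sum of the $H_1$'s of the pieces. Hence every pinched wheel of type $\lambda$ maps to the cell of the perfect cone / $K$-theory complex indexed by $\bigoplus_i Q_{W_{h_i}}$. This also re-explains why the class does not depend on the pinching data. On the $\A_g$ side, the relevant homology is that of the complex of $\ZZ$-lattices with forms, i.e.\ the Quillen / Lee–Szczarba-type complex computing $W_0H_c(\A_g)$ through $K(\ZZ)$-theory. There the images of the wheels are the generators of the exterior-type algebra associated with the Borel classes: odd wheels $W_h$ correspond to the primitive classes in degree $h$. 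Products (orthogonal direct sums) of distinct such classes are linearly independent; this is the known result of \cite{BrownChanGalatiusPayneHopf} and \cite{BrandtEtAlTopWeightAg} that the wheel/Borel monomials are independent in $W_0H_c(\A_g)^\vee$. I will assume that result, and use the Quillen spectral sequence to match the block-sum product with the product in $K$-theory homology.

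Third, I will prove compatibility with the products. Pinching corresponds to the graph product, and the block sum on the $\A$-side is the Hopf product. Once that is in place, $(\tau^{ct})_*\Phi(W_{h_1}\cdots W_{h_r})$ is exactly the monomial $\omega_{h_1}\cdots\omega_{h_r}$ in the known independent family. This gives injectivity.

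The main obstacle is the second step: producing an honest chain map from the dual bridgeless graph complex to the lattice/$K$-theory complex that models $(\tau_g^{ct})_*$ on weight zero, and checking that it sends a single wheel $W_h$ to a nonzero multiple of the primitive generator. To set this up, I would first express both weight zero pieces via Deligne's spectral sequence applied to compatible toroidal/boundary compactifications. I would then verify the matching of cells combinatorially, using the orthogonal-sum property of cycle lattices for pinched graphs.
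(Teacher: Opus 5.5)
Your plan is sound, but it does not follow the paper's proof of this statement. You are really proving the stronger fact that $(\tau^{ct})_*\circ\Phi$ is injective, which the paper establishes only afterwards as Theorem~\ref{thm: wheel classes inject under torelli}.

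For injectivity of $\Phi$ itself the paper never uses $\tau_g^{ct}$:
\begin{enumerate}
\item It observes that $\GKbarzeroBM$ is a subcomplex of the associated graded $\operatorname{Gr}_g(C)$ of the graph $K$-theory complex. On loopless bridgeless graphs the differential there is just edge contraction.
\item It composes with the multiplicative map $\rho$ to the Quillen $E^1$-page, and computes $\rho(W_h)=[\tau_{W_h}]$ via the graph Laplacian and \cite{BrownSchnetzWheelClasses}.
\item It gets multiplicativity from $\partial_{\operatorname{Gr}}B(G,H)=G\vee H-G\sqcup H$, where $B(G,H)$ joins two bouquets by a bridge. Then \cite[Theorem~7.4]{BrownChanGalatiusPayneHopf} finishes.
\end{enumerate}

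Your route instead rests on two inputs. One is the algebraic--tropical Torelli compatibility (Lemma~\ref{Lemma: Algebraic and tropical torelli comparison}). The other is matching $\iota_{\mathcal A}\circ(\tau_g^{ct,\mathrm{trop}})_*$ with $\rho\circ\jmath_g$ on graph cycles (Lemma~\ref{lem:bridgeless-graph-complex-to-Quillen}). This is the step you flag as the main obstacle. It is load-bearing for you, but it is avoidable for the statement at hand.

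In exchange you get two things:
\begin{enumerate}
\item Torelli injectivity at the same time.
\item A more transparent product step. Each edge of a pinched wheel graph lies in a single wheel, and $H_1$ of a wedge is the orthogonal sum of the $H_1(W_{h_i})$. So the tropical Torelli map on its cone factors through block sum. Every pinching then pushes forward directly to the block-sum product of wheel classes, with no need to slide to the bouquet first.
\end{enumerate}

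Two small corrections:
\begin{enumerate}
\item Equal images under $(\tau^{ct})_*$ do not show that different pinchings give the same class in $\bigl(W_0H_c^{2g}(\M_g^{ct})\bigr)^\vee$. Well-definedness of $\Phi_g$ still comes from Proposition~\ref{prop:pinched-wheel-independence}. This is harmless here, since the statement presupposes it.
\item The input you need is the polynomial statement $W_h\mapsto[\tau_{W_h}]\in{}^QE^1_{h,h}$ of \cite[Theorem~7.4]{BrownChanGalatiusPayneHopf}. An exterior-algebra statement about products of distinct classes is not enough, because partitions with repeated parts such as $W_3^2$ must also give nonzero, independent classes.
\end{enumerate}
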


The heart of the proof draws on the structure of the weight zero cohomology of
$\mathcal A=\bigsqcup_{g\geq0}\mathcal A_g$. In \cite{BrownChanGalatiusPayneHopf}, it is shown
 that $W_0H_c^\bullet(\mathcal A;\QQ)$ carries a bigraded Hopf algebra
structure whose graded dual contains a polynomial algebra
generated by the odd wheels. Their construction compares the Quillen spectral
sequence for $K(\ZZ)$ with a compatible graph spectral sequence.

We place the dual bridgeless graph complex in this framework through the
map found in Lemma~\ref{lem:bridgeless-inclusion-into-GrC}. By doing so, we
compare the pinched wheel classes with the corresponding wheel monomials
on the $\mathcal A_g$ side. The linear independence of these monomials,
proved in \cite{BrownChanGalatiusPayneHopf}, implies
Theorem~\ref{thm:intro-wheel-polynomial}. We later show in Lemma \ref{lem:bridgeless-graph-complex-to-Quillen} that this
comparison is compatible with pushforward under the extended Torelli
morphism. We prove further in Theorem \ref{thm: wheel classes inject under torelli} that the extended Torelli pushforward is injective on the wheel subspace $\mathrm{im}(\Phi)$.

\subsubsection{Comparison of two motivic families}

The odd wheel classes also detect a broader Lie-theoretic structure. Theorems 1.2 and 1.3 of \cite{BrownChanGalatiusPayneHopf} construct canonical
primitive classes
$$
\omega^{4k+1}
\in
W_0H_c^{4k+2}(\mathcal A_{2k+1};\mathbb R),
\qquad k\geq1,
$$
and show the first eleven classes
$\omega^5,\omega^9,\ldots,\omega^{45}$ generate a free Lie
subalgebra. 

In \cite[Proposition~1.8]{BrownChanGalatiusPayneHopf}, the authors
construct a morphism of graded Lie algebras from the diagonal primitive
weight zero cohomology of $\mathcal A$ to
$H^0(\mathsf{GC}_2)\otimes\mathbb R$. Under this morphism, the classes
$\omega^{4k+1}$ map to canonical classes in graph cohomology; see also
\cite[$\mathsection$5.2]{BrownPanzerPortner2026}. The isomorphism
$$
H^0(\mathsf{GC}_2)\cong\mathfrak{grt}_1,
$$
established in \cite[Theorem~1.1]{Willwacher2015}, identifies these classes with elements of the Grothendieck--Teichm\"uller Lie algebra.
Their nonzero pairings with the odd wheel graphs
\cite{BrownSchnetzWheelClasses} together with \cite[Theorem~1.2]{WillwacherFreeGenerators} show that the full family $\omega^5,\omega^9,\omega^{13},\ldots$ generates a free Lie subalgebra.

Let $\mathfrak g^{\mathfrak m}$ denote the graded Lie algebra associated
to the category of mixed Tate motives over $\mathbb Z$. It is freely generated by one element in each odd degree $2k+1 \geq 3$. Combining the embedding
$\mathfrak g^{\mathfrak m}\hookrightarrow\mathfrak{grt}_1$
of \cite{Brown2012} with the graph complex description \eqref{eq: Classical weight zero is GC2} realizes $\mathfrak{g^m}\otimes_\QQ \RR$ in the diagonal weight zero compactly supported cohomology of $\M_g$. Let
$$
\L_\M\subset \bigoplus_{g\geq 2}W_0H_c^{2g}(\M_g;\RR)
$$
denote the resulting realization. On the abelian
side, set
$$
\L_{\A}:=\mathbb{L}(\omega^5,\omega^9,\omega^{13},\ldots) \subseteq \bigoplus_{g\geq 2} W_0H_c^{2g}(\A_g;\RR).
$$
By \cite[Theorem~3]{BrownPanzerPortner2026}, under
Willwacher's identification with $\mathfrak{grt}_1$,
the canonical graph classes arising from the
$\omega^{4k+1}$ lie in and generate the motivic Lie
subalgebra
$\mathfrak g^{\mathfrak m}\otimes_{\QQ}\RR$. Thus $\L_\A$ and $\L_\M$ are two cohomological realizations of the same motivic Lie algebra, obtained through a priori different constructions. 

These constructions do not by themselves show that the two realizations are compatible with the algebraic Torelli map; this question is noted in
\cite[$\mathsection$1.5]{BrownChanGalatiusPayneHopf} and
\cite[Introduction, following Theorem~3]{BrownPanzerPortner2026}.
Our final theorem shows that these two families agree after mapping to the compactly supported
cohomology of the compact type locus.

\begin{theorem}\label{thm:intro-motivic-comparison}
Under the zigzag
$$
\bigoplus_{g\geq2}
W_0H_c^{2g}(\mathcal A_g;\mathbb R)
\xrightarrow{\,(\tau^{ct})^*\,}
\bigoplus_{g\geq2}
W_0H_c^{2g}(\mathcal M_g^{ct};\mathbb R)
\xleftarrow{\,i_!\,}
\bigoplus_{g\geq2}
W_0H_c^{2g}(\mathcal M_g;\mathbb R),
$$
the two motivic families have the same image
$
(\tau^{ct})^*(\L_{\mathcal A})
=
i_!(\L_{\mathcal M}).
$
Furthermore, $(\tau^{ct})^*|_{\L_{\mathcal A}}$ is injective.

\end{theorem}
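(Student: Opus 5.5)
We argue by duality, using the wheel pairings on both sides. The plan is to show that, in each genus $g$, both images are the annihilator in $W_0H_c^{2g}(\mathcal M_g^{ct};\RR)$ of the same subspace of the dual. Equivalently, we show that they are spanned by the same functionals on the homology of the dual bridgeless complex $\GKbarzeroBM$.

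\textbf{Step 1: the curve side.} By Theorem~\ref{thm:intro-weight zero-model} and \eqref{eq: Classical weight zero is GC2}, the map $i_!$ in degree $2g$ is induced by the inclusion of graph complexes. This is the quotient map from the full complex to the bridgeless one, or dually the inclusion $\GKbarzeroBM\hookrightarrow \mathsf{GC}_2^\vee$ of Lemma~\ref{lem:bridgeless-inclusion-into-GrC}. Hence a class $x\in\L_\M$ is determined, after applying $i_!$, by its pairings with bridgeless cycles.

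\textbf{Step 2: the abelian side.} Lemma~\ref{lem:bridgeless-graph-complex-to-Quillen} identifies the pushforward $(\tau^{ct}_g)_*$ on $\GKbarzeroBM$ with the map into the Quillen/graph spectral sequence of \cite{BrownChanGalatiusPayneHopf}. Dually, pairing $(\tau^{ct})^*\omega$ with a bridgeless cycle $\gamma$ equals pairing $\omega$ with $(\tau^{ct})_*\gamma$. By \cite[Proposition~1.8]{BrownChanGalatiusPayneHopf}, this in turn equals the pairing of the canonical graph class of $\omega$ in $H^0(\mathsf{GC}_2)\otimes\RR$ with $\gamma$, viewed in $\mathsf{GC}_2^\vee$. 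The main compatibility to verify is that the composite $\GKbarzeroBM\to\mathsf{GC}_2^\vee$ obtained through $\A_g$ agrees with the direct inclusion. I expect this to be the chief obstacle. I would check it on generators, using that both maps send a bridgeless graph to the corresponding cell of the tropical/perfect cone complex: a bridgeless graph lies in the cographic locus of $A_g^{trop}$, and the tropical Torelli map is the identity on it.

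\textbf{Step 3: comparing the images.} Granting Step 2, for every Lie word $P$ in the $\omega^{4k+1}$ we get
$$(\tau^{ct})^*P(\omega)=i_!\bigl(P(\sigma)\bigr),$$
where $\sigma_{2k+1}$ are the graph classes attached to $\omega^{4k+1}$. For this we need that the map of \cite[Prop.~1.8]{BrownChanGalatiusPayneHopf} is a Lie morphism onto $H^0(\mathsf{GC}_2)$, and that $\L_\M$ is by definition the Lie algebra of those images. Combining with \cite[Theorem~3]{BrownPanzerPortner2026}, the $\sigma_{2k+1}$ generate $\mathfrak g^{\mathfrak m}\otimes\RR=\L_\M$. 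This gives
$$(\tau^{ct})^*(\L_\A)=i_!(\L_\M).$$

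\textbf{Step 4: injectivity.} Injectivity of $(\tau^{ct})^*|_{\L_\A}$ now reduces to injectivity of $i_!$ on $\L_\M$, since $\L_\A\to\L_\M$ is an isomorphism: it sends free generators to free generators by Willwacher's freeness and Brown--Schnetz. In degree $2g$, $i_!$ is injective by the argument for Theorem~\ref{Thm: intro M_g comparison}. Thus $(\tau^{ct})^*(P(\omega))=0$ forces $P(\sigma)=0$, and hence $P=0$.
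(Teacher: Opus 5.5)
Your overall architecture matches the paper's. The paper first proves $(\tau_g^{ct})^*\alpha=i_{g,!}(\eta_{\A,g}(\alpha))$ for every primitive $\alpha\in W_0H_c^{2g}(\A_g;\RR)$, which corresponds to your Steps 1--2. It then deduces the equality of images and the injectivity exactly as in your Steps 3--4. The gap is in Step 2, which carries all the content.

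The check you propose, matching cells in the cographic locus, cannot establish Step 2. The compatibility holds only for primitive classes, and your argument never uses primitivity. There is no map ``through $\A_g$'' into $\mathsf{GC}_2^\vee$. What must be compared are two maps $H_{2g}(\GKbarzeroBM)\to(P_g^{\A})^\vee$, namely $\gamma\mapsto\langle(\tau_g^{ct})^*(-),\gamma\rangle$ and $\gamma\mapsto\langle\beta_{\A}(-),\gamma\rangle$.

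Without primitivity the analogous statement is false. Take a wheel bouquet $\Gamma_\lambda$ with $r\geq2$ factors. Theorem~\ref{thm: wheel classes inject under torelli} gives $(\tau_g^{ct})_*[\Gamma_\lambda]\neq0$. Yet $\Gamma_\lambda$ is a boundary in $\mathsf G^{(g)}$: join $W_{h_1}$ to the rest of the bouquet by a bridge between central vertices. Contracting the bridge gives $\Gamma_\lambda$, every other contraction creates parallel edges, and $\mathsf G^{(g)}$ has no deletion term. So $(i_{g,!})^\vee$ kills $[\Gamma_\lambda]$, and $(\tau_g^{ct})^*\omega\notin\operatorname{im}(i_{g,!})$ for some $\omega$, necessarily non-primitive.

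A smaller inaccuracy: the tropical Torelli map is not the identity on bridgeless cells. It is $\ell\mapsto[Q_{G,\ell}]$, which collapses cells along $2$-edge cuts and identifies cells of $2$-isomorphic graphs.

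The missing idea is to pass through indecomposables. The map $\beta_{\A}$ is built from $\rho\colon H_\bullet(\operatorname{Gr}(C))\to{}^QE^1$. A cycle of $\mathsf G^{(g)}$ is generally not a cycle of $\operatorname{Gr}_g(C)$, because $\partial_{\operatorname{Gr}}$ also deletes bridges. Bridge deletions are decomposable, and
\[
H_0(\mathsf G^{(g)})\cong H_{2g}(K/K^2)\cong\operatorname{Indec}H_{2g}(\operatorname{Gr}_g(C))
\]
by \cite[Lemma~5.21 and Proposition~5.22]{BrownChanGalatiusPayneHopf}. Primitive classes, viewed on ${}^QE^1$ through the Hopf quotient to $\mathcal A^{\mathrm{BM}}$, vanish on products. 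Hence $R_g\circ\rho_{g,g}$ factors through $\Pi_g=(\lambda_g)_*^{-1}\circ(q_g)_*$ and equals $\beta_{\A,g}^\vee\circ\Pi_g$.

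Your generator-level check is then only the other half of the argument. For a bridgeless cycle $\gamma$, $\jmath_g\gamma$ is an honest cycle, and $q_g\circ\jmath_g=\lambda_g\circ u_g$ gives $\Pi_g\circ(\jmath_g)_*=(u_g)_*$. Combined with Lemmas~\ref{Lemma: Algebraic and tropical torelli comparison} and~\ref{lem:bridgeless-graph-complex-to-Quillen}, this proves Step 2 on $\operatorname{Prim}(D_{\A})\supseteq\L_{\A}$.

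Step 1 also needs an argument rather than a citation. The paper identifies $i_{g,!}$ with restriction $\widetilde H^{2g-1}(\Delta_g)\to\widetilde H^{2g-1}(\Delta_g^{bl})$. It does this using the connecting maps of the pairs $(\Mb_g,\partial\Mb_g)$ and $(\Mb_g,\Delta_{\mathrm{irr}})$ together with purity of $H^\bullet(\Mb_g)$. Note also that Lemma~\ref{lem:bridgeless-inclusion-into-GrC} concerns the inclusion into $\operatorname{Gr}_g(C)$, not into $\mathsf{GC}_2^\vee$. Finally, in Step 3 you only need $\beta_{\A}$ to be a Lie morphism, not a surjective one.
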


\subsection{Acknowledgments} The author would like to thank Thomas Willwacher, Melody Chan, Samir Canning, Siddarth Kannan, Terry Song, and Thomas Wennink for enlightening conversations related to this project. The author is especially grateful to their doctoral advisor, Sam Payne, for his generous mentorship and for many insightful conversations throughout this project. The author was supported by the National Science Foundation Graduate
Research Fellowship Program under Grant No.~DGE-2137420.

\section{Background and Setup}

\subsection{Conventions} Unless specifically stated otherwise, all vector spaces are understood to be $\QQ$ vector spaces and all homology and cohomology groups will be taken with $\QQ$-coefficients. If $V$ is a finite dimensional vector space with an action of a finite group $G$, we will identify the invariant subspace $V^G$ with the coinvariant space $V_G$.

When $V$ is additionally a graded vector space, for any $k \in \ZZ$, define $V[k]$ to be the graded vector space whose degree $j$ part is the degree $j+k$ part of $V$. Here, we use the standard Koszul sign convention. Specifically, if $V$ and $W$ are two graded vector spaces, the isomorphism
$$
V \otimes W \ \xrightarrow{\sim} W \otimes V
$$
sends an element $v \otimes w$, consisting of homogeneous elements $v \in V$, $w\in W$ of degrees $|v|$ and $|w|$, respectively, to the element $(-1)^{|v||w|}w \otimes v$. If $f\in \mathrm{Hom}(V_1, V_2)$ and $g \in \mathrm{Hom}(W_1,W_2)$ are two linear maps of graded vector spaces, of homogeneous degree $|f|$ and $|g|$ respectively, then define the linear map $f \otimes g \in \mathrm{Hom}(V_1 \otimes W_1, V_2 \otimes W_2)$ to be
$$
(f \otimes g)(v \otimes w)= (-1)^{|g||v|} f(v) \otimes g(w)
$$
for all homogeneous elements $v \in V_1$, $w\in W_1$.

If $V$ is additionally a differential graded vector space, we abbreviate the term ``differential graded'' to dg. For the bulk of the paper, we consider cohomological grading conventions so that all differentials have degree $+1$ unless specifically stated otherwise. When $(V, d_V)$ and $(W,d_W)$ are dg vector spaces, we denote the differential on $V \otimes W$ as
\begin{equation*}
    d_V + d_W:= d_V \otimes \mathrm{id}_W + \mathrm{id}_V \otimes d_W,
\end{equation*}
 remembering the Koszul sign conventions. 

We often work with moduli spaces and graph complexes indexed by genus $g$ and some number of marked points $n$. If $Y_{g,n}$ is any such space, we denote $Y_g:= Y_{g,0}$.

\subsection{Deligne's weight spectral sequence.}

We recall the Poincaré dual form of Deligne's weight spectral sequence; see
\cite[$\mathsection$3.2]{DeligneHodgeII} and, for this formulation,
\cite[Example~3.5]{PetersenSpectralSequence}
and \cite[$\mathsection$2.3]{PayneWillwacherWeight2}.
Let $X$ be a smooth complex algebraic variety or Deligne--Mumford stack
with a smooth proper compactification
$
X\subseteq \overline X
$
whose boundary $D=\overline X\setminus X$ is a normal crossings divisor.
For $p\geq 1$, let $\widetilde D^{(p)}$ denote the disjoint union of
the normalizations of the closures of the codimension-$p$ boundary
strata, with the unordered set of $p$ local branches defining each
stratum retained through normalization. Set
$\widetilde D^{(0)}=\overline X$.
 The permutation action on the $p$
local branches along each stratum determines a rank-one sign local system
$\varepsilon^{(p)}$, with $\varepsilon^{(0)}=\QQ$.

There is a spectral sequence
$$
E_1^{p,k}
=
H^k\!\left(\widetilde D^{(p)};\varepsilon^{(p)}\right)
\Longrightarrow
H_c^{p+k}(X),
$$
whose first differential is the alternating sum of the pullbacks obtained
by forgetting one boundary branch. Since the terms in the $k$-th row are
pure of weight $k$, the spectral sequence degenerates at $E_2$, giving
canonical identifications
$$
\operatorname{gr}_k^W H_c^{p+k}(X)
\cong
E_2^{p,k}
\cong
H^p\!\left(E_1^{\bullet,k},d_1\right).
$$

We now specialize this construction to the pair
$
\Mct\subset\Mbar.
$
Set $D:=\Mbar\setminus\Mct$. For $g \geq 1$, the divisor $D$ is the irreducible divisor $\Delta_{\mathrm{irr}}$.  In Proposition~\ref{Boundary indexed by bridgeless graphs}, we show the components of $\widetilde{D}^{(p)}$ are indexed by bridgeless stable $n$-marked graphs of
genus $g$ with $p$ edges.
For such a graph $\Gamma$, set
$$
\M_\Gamma
:=
\prod_{v\in V(\Gamma)}\M_{g_v,n_v},
\qquad
\overline{\M}_\Gamma
:=
\prod_{v\in V(\Gamma)}\overline{\M}_{g_v,n_v},
$$
where $g_v=w(v)$ is the genus of the vertex $v$ and $n_v$ is the number of half-edges incident to $v$.

The natural gluing morphism
$
\xi_\Gamma\colon\M_\Gamma\longrightarrow\Mbar
$
extends to $\overline{\M}_\Gamma$. Its image
$
D_\Gamma
:=
\xi_\Gamma\left(\overline{\M}_\Gamma\right)
$
is the closure of the locus of curves with dual graph $\Gamma$, and
its normalization is the smooth proper Deligne--Mumford stack
$$
\widetilde D_\Gamma
\cong
\left[
\overline{\M}_\Gamma/\operatorname{Aut}(\Gamma)
\right].
$$
Consequently,
$$
\widetilde D^{(p)}
\cong
\bigsqcup_{\substack{[\Gamma]\ \mathrm{bridgeless}\\
|E(\Gamma)|=p}}
\widetilde D_\Gamma,
$$
where the union ranges over isomorphism classes of stable
$n$-marked bridgeless graphs of genus $g$.

After pullback to $\overline{\mathcal M}_\Gamma$, the $p$ retained
boundary branches are globally indexed by the edges of $\Gamma$.
Consequently, 
the local system $\varepsilon^{(p)}$ is identified with the local system
associated to the determinant of the permutation representation of
$\operatorname{Aut}(\Gamma)$ on $E(\Gamma)$; see
\cite[$\mathsection$2.3]{PayneWillwacherWeight2}. 

Thus the $k$-th row of the $E_1$-page of the weight spectral
sequence is naturally identified with the complex
$C_k^\bullet$, whose $j$-th term is
$$
C_k^j:=
\bigoplus_{\substack{[\Gamma]\ \mathrm{bridgeless}\\ |E(\Gamma)|=j}}
\left(
H^k(\overline{\M}_\Gamma)
\otimes
\det(E(\Gamma))
\right)^{\operatorname{Aut}(\Gamma)}.
$$
Consequently, there is a canonical isomorphism of $\operatorname{gr}_k^W H_c^{j+k}(\Mct)$ with $H^j$ of the following complex
\begin{equation}\label{eq: gr_k DWSS complex}
\cdots
\longrightarrow
C_k^{j-1}
\xrightarrow{\,d_{j-1}\,}
C_k^j
\xrightarrow{\,d_j\,}
C_k^{j+1}
\longrightarrow
\cdots.
\end{equation}

The language of modular cooperads and Feynman transforms packages the rows
of this spectral sequence as decorated graph complexes. We recall the necessary
formalism next.

\subsection{Modular cooperads and the Feynman transform}\label{subsec: Modular cooperads}
We briefly recall the definitions of modular cooperads and their
Feynman transforms; see
\cite[$\mathsection$2.4]{PayneWillwacherWeight2} and
\cite{GetzlerKapranov1998}. A \textit{stable $\mathbb S$-module} is
a collection of dg vector spaces
$$
\P=\{\P(g,n)\}_{2g-2+n>0},
$$
where each $\P(g,n)$ is equipped with an action of the symmetric
group $S_n$. For any stable graph $\Gamma$, write
$$
\P(\Gamma)
:=
\bigotimes_{v\in V(\Gamma)}
\P(g(v),n(v)).
$$
The automorphism group
$\operatorname{Aut}(\Gamma)$ acts naturally on $\P(\Gamma)$ by
permuting the vertices and the half-edges.

A \textit{modular cooperad} is a stable $\mathbb S$-module equipped,
for every stable $n$-marked graph $\Gamma$ of genus $g$, with a map
$$
\P(g,n)\longrightarrow\P(\Gamma),
$$
satisfying certain compatibility conditions.

Let $\P$ be a dg modular cooperad. Its \textit{Feynman transform},
denoted $\F\P$, is a modular cooperad whose underlying stable
$\mathbb S$-module is
$$
\F\P(g,n)
:=
\bigoplus_{[\Gamma]}
\left(
\P(\Gamma)
\otimes
\QQ[-1]^{\otimes |E(\Gamma)|}
\right)_{\operatorname{Aut}(\Gamma)}.
$$
Here the sum ranges over isomorphism classes of stable $n$-marked
graphs of genus $g$, and one copy of $\QQ[-1]$ is assigned to each
edge. The automorphism group acts diagonally on $\P(\Gamma)$ and on
the edge factors, permuting the latter with the Koszul signs.

Elements of $\F\P(g,n)$ may be represented by stable
graphs $\Gamma$, together with a decoration
$
\alpha_v\in\P(g(v),n(v))
$
at each vertex and a total ordering of $E(\Gamma)$. Reordering the
edges multiplies the corresponding generator by the sign of the
permutation. When $\P$ has no differential, the differential on $\F\P$ is
given by two pieces:
$$
\delta=\deltasp+\deltal.
$$
The term $\deltasp$ splits a vertex into two vertices joined by a new
edge, distributing the genus and half-edges between them so that
both new vertices are stable. The corresponding cooperad structure
map is applied to the decoration of the original vertex

$$
\begin{tikzpicture}[
  baseline=(current bounding box.center),
  vertex/.style={
    circle,
    draw,
    fill=white,
    inner sep=0pt,
    minimum size=4pt
  },
  edge/.style={line width=0.5pt}
]
  \node at (-1.35,0) {$\deltasp\colon$};

  \coordinate (v) at (0,0);
  \draw[edge] (v) -- (-0.65, 0.65);
  \draw[edge] (v) -- (-0.65,-0.65);
  \draw[edge] (v) -- ( 0.65, 0.65);
  \draw[edge] (v) -- ( 0.65,-0.65);
  \draw[edge] (v) -- ( 0.65, 0);
  \draw[edge] (v) -- ( -0.65,0);
  \node[vertex] at (v) {};

  \node at (1.30,0) {$\longmapsto$};
\node at (2.4,0)
  {$\displaystyle\sum$};
\coordinate (v1) at (3.65,0);
\coordinate (v2) at (4.45,0);
\draw[edge] (v1) -- (v2);
\draw[edge] (v1) -- (3.00, 0.65);
\draw[edge] (v1) -- (3.00,-0.65);
\draw[edge] (v1) -- (3.00,0);
\draw[edge] (v2) -- (5.10, 0.65);
\draw[edge] (v2) -- (5.10,-0.65);
\draw[edge] (v2) -- (5.10,0);
\node[vertex] at (v1) {};
\node[vertex] at (v2) {};
\end{tikzpicture}.
$$
The term $\deltal$ creates a loop at a vertex

$$
\begin{tikzpicture}[
  baseline=(current bounding box.center),
  vertex/.style={
    circle,
    draw,
    fill=white,
    inner sep=0pt,
    minimum size=4pt
  },
  edge/.style={line width=0.5pt}
]
  \node at (-1.25,0) {$\deltal\colon$};

  \coordinate (v) at (0,0);
  \draw[edge] (v) -- (-0.65,-0.70);
  \draw[edge] (v) -- (0,-0.78);
  \draw[edge] (v) -- (0.65,-0.70);
  \node[vertex] at (v) {};

  \node at (1.25,0) {$\longmapsto$};

  \coordinate (w) at (2.55,0);
  \draw[edge] (w) -- (1.90,-0.70);
  \draw[edge] (w) -- (2.55,-0.78);
  \draw[edge] (w) -- (3.20,-0.70);
  \draw[edge] (w)
    .. controls (2.18,0.55) and (2.92,0.55) .. (w);
  \node[vertex] at (w) {};
\end{tikzpicture}
$$
using the modular cooperad structure map
$
\P(g,n)
\longrightarrow
\P(g-1,n+2)
$
on its decoration. In both expansion terms, the new edge is placed first in
the edge ordering. 

\subsection{The Getzler--Kapranov graph complex.}\label{subsec: GK graph complex}
The collection of the cohomology groups $H^\bullet(\Mbar)$ forms a modular cooperad, denoted $H(\Mb)$ \cite[$\mathsection$~2.5]{PayneWillwacherWeight2}.

Elements of the Feynman transform, denoted
$\F H(\Mb)(g,n)$, are linear combinations of stable $n$-marked
graphs of genus $g$, whose vertices $v$ are decorated by classes in
$$
H^\bullet\left(\Mb_{g(v),n(v)}\right).
$$
Since $H(\Mb)$ has zero internal
differential, the differential on its Feynman transform has two parts:
$$
\delta
=
\deltasp+\deltal.
$$

The dg vector spaces $\F H(\Mb)(g,n)$ inherit the weight grading from $H(\Mb)$, where the weight of a graph is the sum of the degrees of its vertex decorations. We consider the weight $k$ part of the Feynman transform whose generators are linear combinations of graphs $\Gamma$ whose vertices $v$ are labeled with elements of $H^{k_v}(\Mb_{g_v,n_v})$ such that $\sum_vk_v=k$. The result is now known as the Getzler--Kapranov graph complex; see, e.g., \cite[$\mathsection$6.1]{AnderssonWillwacherZivkovic2020}, \cite{KaluginGetzlerKapranov2020}, and \cite[$\mathsection$2.5]{PayneWillwacherWeight2}.

\begin{definition}~\cite[Definition~2.2]{PayneWillwacherWeight2}
    The weight $k$ Getzler--Kapranov graph complex is 
    $$
    \mathrm{GK}_{g,n}^k:= \mathrm{gr}^W_k\F H(\Mb)(g,n).
    $$
\end{definition}

The weight $k$ Getzler--Kapranov graph complex naturally computes $\mathrm{gr}_k^WH_c^\bullet(\M_{g,n})$; see \cite{PayneWillwacherWeight2}. To understand the compact type case, consider the dg subspace
$$
\mathrm{GK}^{br,k}_{g,n} \subseteq \mathrm{GK}_{g,n}^k
$$
generated by decorated stable graphs whose underlying graph contains at least one bridge edge. This is a subcomplex because vertex splitting and loop addition
preserve every existing bridge. The quotient by this subcomplex is a graph complex whose underlying generators are decorated stable bridgeless graphs, or a bridgeless version of the Getzler--Kapranov graph complex. 
\begin{definition}
   \textit{The weight $k$ bridgeless Getzler--Kapranov graph complex is }
    \begin{equation*}
        \text{GK}^{bl,k}_{g,n} := \text{GK}^{k}_{g,n}~/~ \text{GK}^{br,k}_{g,n}.
    \end{equation*}
\end{definition}
The weight $k$ bridgeless Getzler--Kapranov graph complex is naturally identified with the weight $k$ row of \eqref{eq: gr_k DWSS complex}. Thus there is a canonical isomorphism
\begin{equation}\label{eq: Canonical isomorphism between the kth graded piece and GKbl,k}
    H^\bullet\GK \cong \text{gr}^W_kH^\bullet_c(\Mct).
\end{equation}
The differential on the quotient can be described as follows. Let $\pi_{bl}: \text{GK}^{k}_{g,n} \to \text{GK}^{bl,k}_{g,n}$ be the quotient map. Then the induced differential on $\text{GK}^{bl,k}_{g,n}$ is
$
\delta^{bl}=\pi_{bl} \circ \delta.
$
That is, one computes the usual Getzler--Kapranov differential and discards every resulting term that contains a bridge. As $\deltal$ cannot add new bridges to a graph, this only applies to the $\deltasp$ piece of the differential. Thus if $\delta^{bl}_{split}:=\pi_{bl}\circ \deltasp$, the differential on the bridgeless complex is
\begin{equation*}\label{Differential}
    \delta^{bl}=\delta^{bl}_{split} + \deltal.
\end{equation*}

When working with the bridgeless complex, we will suppress the notation $\delta^{bl}_{split}$ and write the differential simply as $\delta=\delta_{split}+\deltal$ when no confusion seems possible.

\begin{remark}
In recent work, Bergstr\"om and Wennink give a computer implementation
of Deligne's weight spectral sequences, including the pullback
complexes identified here with the Getzler--Kapranov graph complex
and its bridgeless analogue. See \cite[Sections~3--5]{BergstromWennink2026}
for the construction and implementation.
\end{remark}

\subsection{The weight zero cohomology of $\mathcal A_g$.}
 In \cite{BrownChanGalatiusPayneHopf}, the authors construct a bigraded Hopf algebra structure on 
$$
W_0H_c^*\Big(\coprod_{g \geq 0}\A_g\Big)= \bigoplus_{g \geq 0}W_0H_c^*(\A_g).
$$
The coproduct in this Hopf algebra structure is graded-cocommutative, and is induced by pullback along the proper morphisms
$
\A_g \times \A_h \to 
\A_{g+h}.
$

Let $P_g \subset \mathrm{Sym}^2((\RR^g)^\vee)$ denote the set of symmetric bilinear forms on $\RR^g$ that are positive definite.  Let $P_g^{\mathrm{rt}}$ denote the set of positive semidefinite forms on $\RR^g$ whose kernel is rational, i.e., the kernel is of the form $W \otimes \RR$ for some vector subspace $W \subset \QQ^g$. Topologize $P_g^{\mathrm{rt}}$ with its Satake topology; see \cite[$\mathsection$2.1]{BrownChanGalatiusPayneHopf} for more details. Then define
$
A_g^\mathrm{trop}:= P_g^{\mathrm{rt}}/\mathrm{GL}_g(\ZZ).
$
\begin{proposition}~\cite[Theorem~3.1]{BrandtEtAlTopWeightAg},~\cite[Corollary~2.9]{OdakaOshima2021}
For each $k \geq 0$, there is a canonical isomorphism
$
H_c^k(A_g^\mathrm{trop})\cong W_0H_c^k(\A_g).
$
\end{proposition}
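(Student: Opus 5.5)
The plan is to realize both sides as the reduced cohomology of one and the same ``link'' space, namely the boundary complex of a toroidal compactification of $\A_g$. I will use the standard fact, which is the $k=0$ row of Deligne's weight spectral sequence recalled above, that for a smooth Deligne--Mumford stack $X$ with a smooth proper compactification $\overline X$ whose boundary $D$ is a normal crossings divisor,
$$
W_0H_c^k(X)\cong \widetilde H^{k-1}\bigl(\Delta(D);\QQ\bigr).
$$
Here $\Delta(D)$ is the (generalized) dual complex of $D$. Indeed, the weight zero row is $E_1^{p,0}=H^0(\widetilde D^{(p)};\varepsilon^{(p)})$, and this is precisely the cellular cochain complex of $\Delta(D)$, augmented by $H^0(\overline X)=\QQ$. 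With rational coefficients this also holds for stacks and for quotients by finite groups, since all the relevant cochain groups are taken as invariants.

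\textbf{Step 1: the compactification.} Choose an admissible $\mathrm{GL}_g(\ZZ)$-invariant fan $\Sigma$ decomposing the rational closure $P_g^{\mathrm{rt}}$ into rational polyhedral cones. Pass to a finite-index subcomplex (refinement) that is smooth and simplicial, so that the associated toroidal compactification $\overline{\A}_g^\Sigma$ of Ash--Mumford--Rapoport--Tai is a smooth proper Deligne--Mumford stack with normal crossings boundary. To make this rigorous I will first work on a neat level-$\ell$ cover $\A_g[\ell]$, whose toroidal compactification is a smooth projective variety with simple normal crossings boundary, and then take invariants under the finite group $\mathrm{Sp}_{2g}(\ZZ/\ell)$. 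Both sides of the formula above are compatible with taking invariants.

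\textbf{Step 2: identify the dual complex.} The boundary strata of $\overline{\A}_g^\Sigma$ correspond to $\mathrm{GL}_g(\ZZ)$-orbits of cones $\sigma\in\Sigma$ meeting $P_g$ in their relative interior. The incidences of strata are reversed face inclusions, and the monodromy on local branches is the action of stabilizers on the rays of $\sigma$. Hence $\Delta(D)$ is the link of the generalized cone complex $\Sigma/\mathrm{GL}_g(\ZZ)$, that is, the quotient by scaling $\bigl(P_g^{\mathrm{rt}}\setminus\{0\}\bigr)/\RR_{>0}$ modulo $\mathrm{GL}_g(\ZZ)$, which I denote $LA_g^{\mathrm{trop}}$. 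The sign local systems $\varepsilon^{(p)}$ match the orientation characters of the cones, so the $E_1$ row is literally the cellular cochain complex of this quotient.

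\textbf{Step 3: from the link to $A_g^{\mathrm{trop}}$.} The space $A_g^{\mathrm{trop}}$ is the open cone on $LA_g^{\mathrm{trop}}$, with the zero form as cone point. The one-point compactification of an open cone on $L$ is the unreduced suspension of $L$, and therefore
$$
H_c^k\bigl(A_g^{\mathrm{trop}}\bigr)\cong\widetilde H^{k-1}\bigl(LA_g^{\mathrm{trop}}\bigr).
$$
Composing this with Steps 1 and 2 gives the canonical isomorphism. To check canonicity, I will show independence of $\Sigma$: any two admissible fans have a common refinement, and a refinement induces a blowup of the compactification whose induced map on dual complexes is a subdivision.

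\textbf{Main obstacle.} The main difficulty is the topology in Step 3. The identification of $\Sigma/\mathrm{GL}_g(\ZZ)$ with $P_g^{\mathrm{rt}}/\mathrm{GL}_g(\ZZ)$ must be a homeomorphism when the target carries the Satake topology rather than the subspace topology. One needs that the fan topology, the colimit topology over cones, agrees with the Satake topology on the quotient; this uses the local finiteness of admissible fans modulo $\mathrm{GL}_g(\ZZ)$ (reduction theory). I would first try to prove this by showing that both topologies are compactly generated, Hausdorff, and agree on each image of a closed cone, which is compact. The remaining issue is the handling of the infinite stabilizers of rational boundary cones, where I expect to cite \cite[\S2.1]{BrownChanGalatiusPayneHopf} and \cite{OdakaOshima2021}.
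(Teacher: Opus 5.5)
The paper gives no proof of its own here; it cites \cite[Theorem~3.1]{BrandtEtAlTopWeightAg} and \cite[Corollary~2.9]{OdakaOshima2021}, and your outline follows essentially the argument of those sources. That argument takes a regular admissible decomposition and treats the toroidal compactification as a normal crossings pair. It identifies the boundary complex with the link of $\Sigma/\mathrm{GL}_g(\ZZ)$ via the cone complex of the toroidal structure, then applies the boundary-complex description of weight zero and the suspension isomorphism. The Satake-topology comparison comes from \cite{OdakaOshima2021} and \cite[$\mathsection$2.1]{BrownChanGalatiusPayneHopf}. So the route is right. However, the first sentence of your Step~2 is false as written, and the ``Hence'' after it does not follow from it.

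\textbf{The error in Step~2.} Boundary strata of $\overline{\A}_g^{\Sigma}$ correspond to $\mathrm{GL}_g(\ZZ)$-orbits of \emph{all} nonzero cones of $\Sigma$, not only those meeting $P_g$. A cone whose relative interior consists of forms of rank $r<g$ indexes a stratum lying over $\A_{g-r}$ in the Satake compactification. For example, take a ray $\RR_{\geq0}vv^{T}$ with $v\in\ZZ^g$ primitive. It is an extremal ray of the closed positive semidefinite cone, so it belongs to every admissible decomposition. It indexes a boundary divisor over $\A_{g-1}$, yet for $g\geq2$ it does not meet $P_g$.

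If you kept only cones meeting $P_g$, you would get the cochains of the pair $(LA_g^{\mathrm{trop}},LA_{g-1}^{\mathrm{trop}})$. These compute $H_c^{\bullet}(P_g/\mathrm{GL}_g(\ZZ))$, which is the Quillen/Steinberg term, not $W_0H_c^\bullet(\A_g)$. This is exactly the distinction the paper records with the map $\pi_g$. Once the indexing is corrected, your conclusion that $\Delta(D)$ is the full link is correct.

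\textbf{A smaller slip.} Closed cones are not compact, so run your topological comparison on the link instead. The link is a finite union of images of compact polytopes. Stabilizers, even the infinite ones of degenerate cones, act on these polytopes through finite groups, since they permute the rays. The natural map from this compact colimit onto the Satake quotient is then a continuous bijection onto a Hausdorff space, hence a homeomorphism. The facts you need to cite are that each closed rational cone maps continuously into the Satake topology, and that the Satake quotient is Hausdorff.
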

For later use, define

$$
\mathcal A_{g,t}^{\mathrm{BM}}
:=
\left(
W_0H_c^{g+t}(\mathcal A_g)
\right)^\vee
\cong
H_{g+t}^{\mathrm{BM}}
(A_g^{\mathrm{trop}}),
\qquad
\mathcal A^{\mathrm{BM}}
:=
\bigoplus_{g,t}\mathcal A_{g,t}^{\mathrm{BM}}.
$$

Dualizing the coproduct gives a graded-commutative product on
$\mathcal A^{\mathrm{BM}}$. This product
is induced by block sum of quadratic forms
\cite[Proposition~4.1]{BrownChanGalatiusPayneHopf}.

\subsection{The Quillen spectral sequence.}\label{sec:weight zero-Ag}

We recall the homological spectral sequence associated with the rank
filtration on the Waldhausen $S$-construction for the algebraic
$K$-theory of $\ZZ$. Following
\cite{BrownChanGalatiusPayneHopf}, we refer to it as the
\emph{Quillen spectral sequence}. We record only the features needed in
Sections~\ref{subsec: Polynomial classes generated by odd wheels} and
\ref{subsec: Injectivity under the extended Torelli morphism}; see
\cite[Sections~2.2--2.4]{BrownChanGalatiusPayneHopf} and
\cite[II.9 and IV.8]{WeibelKBook2013} for the construction.

Let $BK(\ZZ)$ denote the Waldhausen delooping of the algebraic
$K$-theory space of $\ZZ$. Its rank filtration is denoted by
$$
F_0BK(\ZZ)\subseteq F_1BK(\ZZ)\subseteq\cdots,
$$
where $F_sBK(\ZZ)$ is represented by diagrams whose largest module
has rank at most $s$. The associated homological spectral sequence
has the form
$$
{}^QE^1_{s,t}
=
H_{s+t}\bigl(F_sBK(\ZZ),F_{s-1}BK(\ZZ);\QQ\bigr)
\cong
H_t\bigl(
\operatorname{GL}_s(\ZZ);
\operatorname{St}_s\otimes\QQ
\bigr)
\Longrightarrow
H_{s+t}\bigl(BK(\ZZ);\QQ\bigr),
$$
where $\operatorname{St}_s$ is the Steinberg module; see
\cite[Proposition~2.15]{BrownChanGalatiusPayneHopf}.

The first page also admits a geometric description. Fix the isomorphisms constructed in \cite[Section~2.4 and Remark~2.21]{BrownChanGalatiusPayneHopf}:
\begin{equation}\label{eq: Quillen as PD quadratic forms}
\theta_{s,t}\colon
H_{s+t}^{\mathrm{BM}}
\left(
P_s/\operatorname{GL}_s(\ZZ);\QQ
\right)
\xrightarrow{\sim}
{}^QE^1_{s,t}.
\end{equation}

 Under this comparison, the resulting product on the first
page is induced by block sum of quadratic forms;
see \cite[Proposition~2.15 and Remark~2.22]
{BrownChanGalatiusPayneHopf}. Together with the filtered coproduct, this
makes the Quillen spectral sequence a spectral sequence of bigraded Hopf
algebras. In particular, the product on ${}^QE^1$ is
graded-commutative with respect to total degree
\cite[Theorem~3.18]{BrownChanGalatiusPayneHopf}.

The $E^1$-page is also closely related to the Borel--Moore homology of $A_g^{\mathrm{trop}}$. Let $e\in{}^QE^1_{1,0}$ denote the canonical generator chosen in
\cite[Section~4.2]{BrownChanGalatiusPayneHopf}.

\begin{proposition}[{\cite[Proposition~4.4]
{BrownChanGalatiusPayneHopf}}]
\label{Prop: A^BM injects into quillen}
There is a canonical isomorphism of graded-commutative algebras
$$
\mathcal A^{\mathrm{BM}}
\otimes_{\QQ}
\QQ[\epsilon]/(\epsilon^2)
\xrightarrow{\sim}
{}^QE^1
$$
sending $\epsilon$ to $e$, where $\epsilon$ has bidegree
$(1,0)$.
\end{proposition}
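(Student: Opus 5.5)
This statement is quoted from \cite[Proposition~4.4]{BrownChanGalatiusPayneHopf}, so the plan is to reconstruct their argument in outline rather than prove anything new. The strategy is to compare both sides through the geometric description \eqref{eq: Quillen as PD quadratic forms}, which realizes ${}^QE^1_{s,t}$ as $H^{\mathrm{BM}}_{s+t}(P_s/\operatorname{GL}_s(\ZZ))$. One then relates this locally symmetric quotient to $A_s^{\mathrm{trop}}=P_s^{\mathrm{rt}}/\operatorname{GL}_s(\ZZ)$. The space $A_s^{\mathrm{trop}}$ decomposes into the open stratum $P_s/\operatorname{GL}_s(\ZZ)$ of positive definite forms and closed strata of forms with nontrivial rational kernel. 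The stratum where the kernel has rank $r$ is identified with $P_{s-r}/\operatorname{GL}_{s-r}(\ZZ)$, via the quotient by the kernel.

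The first step is to use the long exact sequence in Borel--Moore homology for the closed subspace $A_{s-1}^{\mathrm{trop}}\hookrightarrow A_s^{\mathrm{trop}}$, the forms with nonzero kernel. Its open complement is $P_s/\operatorname{GL}_s(\ZZ)$. The key geometric input is that the inclusion $A^{\mathrm{trop}}_{s-1}\to A^{\mathrm{trop}}_s$ should be nullhomotopic in a proper sense, or at least induce zero in Borel--Moore homology. I would argue this by the cone construction: the Satake compactification contracts a neighbourhood of the boundary toward the cone point, since $A_s^{\mathrm{trop}}$ is the cone over its link. If so, the sequence splits into short exact sequences
\[
0\to H^{\mathrm{BM}}_{k}(A_s^{\mathrm{trop}})\to H^{\mathrm{BM}}_k(P_s/\operatorname{GL}_s(\ZZ))\to H^{\mathrm{BM}}_{k-1}(A_{s-1}^{\mathrm{trop}})\to 0 .
\]
Reading this with $k=s+t$ gives ${}^QE^1_{s,t}\cong\mathcal A^{\mathrm{BM}}_{s,t}\oplus\mathcal A^{\mathrm{BM}}_{s-1,t}$, which is exactly the bigraded shape of $\mathcal A^{\mathrm{BM}}\otimes\QQ[\epsilon]/(\epsilon^2)$ with $\epsilon$ in bidegree $(1,0)$.

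The second step makes this splitting multiplicative and canonical. Define the map by sending $a\otimes 1$ to the image of $a$ under the inclusion above. Define it on $a\otimes\epsilon$ as $a\cdot e$, using the block-sum product on ${}^QE^1$. Here $e\in{}^QE^1_{1,0}=H_1^{\mathrm{BM}}(P_1/\operatorname{GL}_1(\ZZ))=H_1^{\mathrm{BM}}(\RR_{>0})$ is the fundamental class. Then check three things:
\begin{enumerate}
\item The block-sum product on $\mathcal A^{\mathrm{BM}}$ is compatible with the one on ${}^QE^1$; both are induced by block sum, so this is a naturality statement for the open inclusions.
\item $e^2=0$, which follows from graded commutativity since $e$ has odd total degree $1$.
\item Multiplication by $e$, followed by the boundary map to $H^{\mathrm{BM}}_{\bullet-1}(A_{s-1}^{\mathrm{trop}})$, is an isomorphism. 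Geometrically, the boundary map restricts a chain near forms of corank one, and block summing with a rank-one form $x^2$ with $x\to 0$ recovers the original class.
\end{enumerate}
Together these show the map is an algebra isomorphism.

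The main obstacle I expect is the vanishing of the map $H^{\mathrm{BM}}(A^{\mathrm{trop}}_{s-1})\to H^{\mathrm{BM}}(A^{\mathrm{trop}}_s)$, including the correct orientation and sign conventions that make the splitting multiplicative. I would first try to obtain it from the coning homotopy $q\mapsto q+\lambda\cdot(\text{fixed positive form})$, which is proper on the Satake-topologized space. Failing that, I would use the identification of $H_c(A^{\mathrm{trop}}_s)$ with $W_0H_c(\A_s)$ together with the Hopf algebra structure from \cite{BrownChanGalatiusPayneHopf}.
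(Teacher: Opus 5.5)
Your architecture is sound and matches how the paper uses the cited result. The first tensor factor is restriction to the open stratum, $(\iota_{\A})_{g,t}=\theta_{g,t}\circ\pi_g$, and the second is multiplication by $e$ via block sum. The problem is the step you call the key input, the vanishing of $H^{\mathrm{BM}}_\bullet(A^{\mathrm{trop}}_{s-1})\to H^{\mathrm{BM}}_\bullet(A^{\mathrm{trop}}_s)$: neither argument you offer establishes it.

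\emph{The coning argument fails.} Radial contraction toward the cone point is not a proper homotopy, since its endpoint is a constant map on a noncompact space. Radial expansion is not proper either, because it fixes the cone point, which lies in $A^{\mathrm{trop}}_{s-1}$. Cones do not have vanishing Borel--Moore homology: $H^{\mathrm{BM}}_k$ of the open cone on $L$ is $\widetilde H_{k-1}(L)$. Since the boundary is itself the subcone on $LA^{\mathrm{trop}}_{s-1}$, what you need is that $LA^{\mathrm{trop}}_{s-1}\hookrightarrow LA^{\mathrm{trop}}_s$ kills reduced homology, and the cone structure says nothing about that.

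\emph{The homotopy with a fixed form does not descend.} The map $q\mapsto q+\lambda q_0$ with fixed positive definite $q_0$ is not $\operatorname{GL}_s(\ZZ)$-equivariant. Indeed $\gamma q+\lambda q_0$ is equivalent to $q+\lambda\gamma^{-1}q_0$, not to $q+\lambda q_0$, so the map is not defined on $A^{\mathrm{trop}}_s$.

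\emph{The repair.} The equivariant version of your idea is block sum with a rank-one form. The map $\mu\colon A^{\mathrm{trop}}_{s-1}\times A^{\mathrm{trop}}_1\to A^{\mathrm{trop}}_s$, $(q,\lambda)\mapsto q\oplus\lambda x^2$, is well defined and proper (it is the map inducing the product on $\mathcal A^{\mathrm{BM}}$). It restricts on $A^{\mathrm{trop}}_{s-1}\times\{0\}$ to the boundary inclusion. Since $A^{\mathrm{trop}}_1=[0,\infty)$ has zero Borel--Moore homology, the pushforward vanishes.

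More economically, you do not need step 1 as a separate input, because your item (3) implies it. If $\partial(\iota_{\A}(a)\cdot e)$ is a nonzero multiple of $a$ for every $a$, then the connecting map is surjective. By exactness the next map $H^{\mathrm{BM}}_\bullet(A^{\mathrm{trop}}_{s-1})\to H^{\mathrm{BM}}_\bullet(A^{\mathrm{trop}}_s)$ is then zero, $\pi_s$ is injective, and multiplication by $e$ splits each short exact sequence.

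Item (3) must then be proved rather than pictured, and the same map $\mu$ does it, viewed as a proper map of pairs. Its preimage of the boundary $A^{\mathrm{trop}}_{s-1}\subset A^{\mathrm{trop}}_s$ is $Z'=(A^{\mathrm{trop}}_{s-2}\times[0,\infty))\cup(A^{\mathrm{trop}}_{s-1}\times\{0\})$, with open complement $P_{s-1}/\operatorname{GL}_{s-1}(\ZZ)\times(0,\infty)$. The argument has four steps:
\begin{enumerate}
\item $\iota_{\A}(a)\times e$ is the restriction of $a\times e$ from the larger open set $A^{\mathrm{trop}}_{s-1}\times(0,\infty)$.
\item Connecting maps are compatible with enlarging the closed set from $A^{\mathrm{trop}}_{s-1}\times\{0\}$ to $Z'$.
\item The connecting map for $\{0\}\subset[0,\infty)$ sends $e$ to a nonzero multiple of $[\mathrm{pt}]$.
\item Naturality of the localization sequence under $\mu$ then gives $\partial(\iota_{\A}(a)\cdot e)=c\,\mu_*(a\times[0])=c\,a$ with $c\neq0$.
\end{enumerate}
With this in place, your items (1) and (2), which are fine as stated, finish the proof.
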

We denote the inclusion of the first tensor factor by
\begin{equation*}\label{eq: i_A: A^BM into QE^1}
\iota_{\mathcal A}\colon
\mathcal A^{\mathrm{BM}}
\lhook\joinrel\longrightarrow
{}^QE^1.
\end{equation*}
For later use, we record this inclusion componentwise. In total degree $k=g+t$, define 
$$\pi_g: H_k^{\mathrm{BM}}(A_g^{\mathrm{trop}})
\longrightarrow
H_k^{\mathrm{BM}}
\left(
A_g^{\mathrm{trop}},
 A_{g-1}^{\mathrm{trop}}
\right)
\xrightarrow{\sim}
H_k^{\mathrm{BM}}
\left(P_g/\operatorname{GL}_g(\mathbb Z)\right),
$$
by the natural map to relative Borel--Moore homology followed by excision; see \cite[Lemma~2.2]{BrownChanGalatiusPayneHopf}.

Under the isomorphism of Proposition~\ref{Prop: A^BM injects into quillen}, the
$(g,t)$-component of $\iota_{\mathcal A}$ is 
\begin{equation}\label{eq:iota-A-concrete}
(\iota_{\mathcal A})_{g,t}=\theta_{g,t}\circ\pi_g.
\end{equation}
This is the form used in the proof of Lemma \ref{lem:bridgeless-graph-complex-to-Quillen}.

\subsection{$K$-theory of graphs.}\label{sec:K-theory-of-graphs}
We recall the graph $K$-theory construction of
\cite[$\mathsection$5]{BrownChanGalatiusPayneHopf}, which compares a
filtered space of graphs with the rank filtration on $BK(\ZZ)$.
We first describe the spaces and the graph Laplacian map, then pass
to cellular chains and the induced comparison of spectral sequences.

The construction gives a space $BK_{\mathrm{Gr}}$ built from
diagrams of subgraphs and edge contractions. Its filtration records
the first Betti number of the largest graph in each diagram.
Applying $H_1(-;\ZZ)$ to these diagrams gives a filtered map
$$
BK_{\mathrm{Gr}}\longrightarrow BK(\ZZ),
$$
since $H_1(G;\ZZ)$ is free of rank $b_1(G)$; see
\cite[Sections~5.1--5.2]{BrownChanGalatiusPayneHopf}.
Passing to isomorphism classes of graph diagrams gives a simpler
space $M_{\mathrm{Gr}}$ and a filtered map
$q:BK_{\mathrm{Gr}}\to M_{\mathrm{Gr}}$.
Thus the spaces fit into a zigzag
$$
M_{\mathrm{Gr}}
\xleftarrow{\ q\ }
BK_{\mathrm{Gr}}
\longrightarrow
BK(\ZZ).
$$
The map $q$ induces isomorphisms on rational homology, including
the homology of the associated graded spaces
\cite[Lemma~5.11]{BrownChanGalatiusPayneHopf}.
We use $M_{\mathrm{Gr}}$ to describe the graph side of this
comparison explicitly.

The space $M_{\mathrm{Gr}}$ has a rational cell decomposition
indexed by finite, possibly disconnected graphs. These graphs carry
no vertex weights, and no stability conditions are imposed.
Bridges, loops, parallel edges, isolated vertices, and the empty
graph are all allowed. The open cell indexed by $G$ is
\begin{equation}\label{eq:MGr-cell}
\sigma_G
=
(0,\infty)^{E(G)}/\operatorname{Aut}(G).
\end{equation}
A point of this cell assigns a positive length to each edge of
$G$, modulo graph automorphisms.
The rational cell decomposition is described by the characteristic maps
$$
f_G:[0,1]^{E(G)}\longrightarrow M_{\mathrm{Gr}},
$$
which factor through the quotient by $\operatorname{Aut}(G)$.
On the cube interior, we may use the length coordinates
$\ell_e=-\log u_e$, where $u_e\in(0,1)$.

The attaching maps are determined by edge contraction and deletion.
On the face $u_e=1$, $f_G$ agrees with the characteristic map
for $G/e$, the graph obtained by \emph{contracting} $e$.
On the face $u_e=0$, it agrees with the characteristic map
for $G\setminus e$, the graph obtained by \emph{deleting} $e$.
Thus edges contract as their lengths tend to zero and are
deleted as their lengths tend to infinity.
See \cite[Corollary~5.13 and Remark~5.14]
{BrownChanGalatiusPayneHopf}.

Neither contracting nor deleting an edge increases the first Betti number of the graph. Hence the cells
with $b_1(G)\leq g$, together with their boundary attachments,
form a filtered subspace
$$
F_gM_{\mathrm{Gr}}
:=
\bigcup_{b_1(G)\leq g}\sigma_G.
$$
We put
$F_{-1}M_{\mathrm{Gr}}=F_{-1}BK_{\mathrm{Gr}}=\varnothing$. For $g=0$, set
$\operatorname{Gr}_0(M_{\mathrm{Gr}}):=(F_0M_{\mathrm{Gr}})_+$,
where the subscript $+$ denotes the addition of a disjoint basepoint.
For $g\geq1$, set
$$
\operatorname{Gr}_g(M_{\mathrm{Gr}})
:=
F_gM_{\mathrm{Gr}}/F_{g-1}M_{\mathrm{Gr}}.
$$
This quotient retains the genus $g$ cells and collapses every
boundary stratum of smaller genus to the basepoint.

We next describe a useful geometric map on $\mathrm{Gr}_g(M_{\mathrm{Gr}})$. Let $G$
satisfy $b_1(G)=g$. After orienting its edges, write
$$
c=\sum_{e\in E(G)}a_e e\in H_1(G;\RR)
$$
and define the coedge $e^\vee\in H^1(G;\RR)$ by
$e^\vee(c)=a_e$. For positive edge lengths
$\ell=(\ell_e)_{e\in E(G)}$, set
$$
Q_{G,\ell}
:=
\sum_{e\in E(G)}
\ell_e(e^\vee)^2.
$$
This is a positive definite quadratic form on $H_1(G; \RR)$, and its definition is independent of the chosen edge orientations.
 An integral basis of
$H_1(G;\ZZ)$ identifies $Q_{G,\ell}$ with an element of $P_g$.
Changes of basis and graph automorphisms act through
$\operatorname{GL}_g(\ZZ)$, giving the graph Laplacian map
$$
L_G:\sigma_G\longrightarrow P_g/\operatorname{GL}_g(\ZZ),
\qquad
\ell\longmapsto[Q_{G,\ell}].
$$
The cell maps are compatible with the attachments that preserve genus.
They extend over the boundary faces of smaller genus by sending
those faces to $\infty$. Consequently, they assemble to a pointed map
\begin{equation}\label{eq:Lg map}
L_g:
\operatorname{Gr}_g(M_{\mathrm{Gr}})
\longrightarrow
\left(P_g/\operatorname{GL}_g(\ZZ)\right)\cup\{\infty\}.
\end{equation}
The source basepoint is sent to $\infty$, and the target is the
one-point compactification of $P_g/\operatorname{GL}_g(\ZZ)$; see
\cite[pp.~70--71]{BrownChanGalatiusPayneHopf}
and \cite{BrownSchnetzWheelClasses}.

The rational cellular chains of $M_{\mathrm{Gr}}$ admit the following graph model.
Let $C=(C_\bullet,\partial)$ be generated over $\QQ$ by symbols
$[G,\omega]$, where $G$ is a finite graph and
$\omega$ is a total ordering of $E(G)$. We impose the relation
$$
[G,\omega]
=
\operatorname{sgn}(\tau)[G',\omega']
$$
whenever an isomorphism $G\to G'$ identifies the edge orderings
by the permutation $\tau$. The degree of $[G,\omega]$ is
$|E(G)|$. Notice that a graph with an automorphism inducing an
odd permutation of its edges represents zero in $C$.

The differential is given by $\partial=\partial_c-\partial_d$. If $\omega=e_1<\cdots<e_r$, then
\begin{equation*}\label{eq:MGr-differential}
\partial_c[G,\omega]
=
\sum_{i=1}^r
(-1)^{i-1}[G/e_i,\omega_i],
\qquad
\partial_d[G,\omega]
=
\sum_{i=1}^r
(-1)^{i-1}[G\setminus e_i,\omega_i].
\end{equation*}
Here $\omega_i$ is the induced ordering of the remaining edges.
Thus the two parts of the differential record the two types of
boundary attachment in $M_{\mathrm{Gr}}$.

For a graph $G$ with $r$ edges and an edge ordering $\omega$, let
$f_{G,\omega}:[0,1]^r\to M_{\mathrm{Gr}}$ be the 
characteristic map with coordinates ordered by $\omega$. Define its associated singular chain by
$$
[\sigma_G,\omega]
:=
(f_{G,\omega})_*(\iota_r),
$$
where $\iota_r$ is the $r$-fold cross product of the identity
singular $1$-simplex of $[0,1]$. It represents the relative
fundamental class of $[0,1]^r$. The assignment
$[G,\omega]\mapsto[\sigma_G,\omega]$ defines a filtered chain map
\begin{equation*}\label{eq:MGr-cellular-comparison}
F_gC
\xrightarrow{\simeq}
C_\bullet^{\mathrm{sing}}
\left(F_gM_{\mathrm{Gr}};\QQ\right),
\qquad
F_gC
:=
\left\langle
[G,\omega]\mid b_1(G)\leq g
\right\rangle,
\end{equation*}
which is a quasi-isomorphism for every $g$
\cite[Definition~5.15 and Theorem~5.18]
{BrownChanGalatiusPayneHopf}.

We now arrive at the associated graded of $C$, the complex that will
connect our bridgeless graph model with graph $K$-theory. Define $F_{-1}C=0$ and set
\begin{equation}\label{eq:GrC-definition}
\operatorname{Gr}_g(C)
:=
F_gC/F_{g-1}C,
\qquad
\operatorname{Gr}(C)
:=
\bigoplus_{g\geq0}\operatorname{Gr}_g(C).
\end{equation}
A graph with $r$ edges and first Betti number $g$ has bidegree
$(g,r-g)$, with total degree $r$.
The differential now retains only the boundary terms that preserve
first Betti number. Contraction does so precisely for nonloop edges,
while deletion does so precisely for bridges. Hence
\begin{equation}\label{eq:GrC-differential}
\partial_{\operatorname{Gr}}[G,\omega]
=
\sum_{\substack{1\leq i\leq r\\e_i\text{ not a loop}}}
(-1)^{i-1}[G/e_i,\omega_i]
-
\sum_{\substack{1\leq i\leq r\\e_i\text{ a bridge}}}
(-1)^{i-1}[G\setminus e_i,\omega_i].
\end{equation}
Geometrically, the omitted faces are exactly those collapsed to
the basepoint in $\operatorname{Gr}_g(M_{\mathrm{Gr}})$.

The cellular comparison induces an isomorphism
\begin{equation*}\label{eq:GrC-GrM-comparison}
\alpha_g:
H_k\bigl(\operatorname{Gr}_g(C)\bigr)
\xrightarrow{\sim}
\widetilde H_k
\bigl(\operatorname{Gr}_g(M_{\mathrm{Gr}})\bigr).
\end{equation*}
Under $\alpha_g$, a graph cycle
$\sum_i a_i[G_i,\omega_i]$ corresponds to the cellular cycle
$\sum_i a_i[\sigma_{G_i},\omega_i]$. Disjoint union defines a product
\begin{equation}\label{eq:GrC-product}
[G,\omega]\,[G',\omega']
=
[G\sqcup G',\omega*\omega'],
\end{equation}
where $\omega*\omega'$ is the concatenated edge ordering.
First Betti numbers and chain degrees add under disjoint union,
so this product respects both gradings. Together with the
coproduct of \cite[Theorem~5.19]{BrownChanGalatiusPayneHopf},
it makes $\operatorname{Gr}(C)$ a bigraded dg bialgebra.

Finally, the filtration on $BK_{\mathrm{Gr}}$ gives the graph
spectral sequence
$$
{}^GE^1_{g,t}
=
H_{g+t}
\bigl(F_gBK_{\mathrm{Gr}},F_{g-1}BK_{\mathrm{Gr}}\bigr)
\cong
H_{g+t}\bigl(\operatorname{Gr}_g(C)\bigr)
\Longrightarrow
H_{g+t}\bigl(BK_{\mathrm{Gr}}\bigr).
$$
The displayed identification uses the rational comparison $q$
and the cellular model above. The filtered map
$BK_{\mathrm{Gr}}\to BK(\ZZ)$ induces a morphism of bialgebra
spectral sequences to the Quillen spectral sequence
\cite[(63) and Corollary~5.20]{BrownChanGalatiusPayneHopf}.
On the first pages, this gives a morphism of bigraded bialgebras
$
\rho:
H_\bullet\bigl(\operatorname{Gr}(C)\bigr)
\longrightarrow
{}^QE^1,
$
whose $(g,t)$-component is
\begin{equation}\label{eq:graph-to-Quillen}
\rho_{g,t}:
H_{g+t}\bigl(\operatorname{Gr}_g(C)\bigr)
\longrightarrow
{}^QE^1_{g,t}.
\end{equation}

Under the geometric description above and the identification
$$
\widetilde H_\bullet
\left(
\left(P_g/\operatorname{GL}_g(\ZZ)\right)\cup\{\infty\}
\right)
\cong
H_\bullet^{\mathrm{BM}}
\left(
P_g/\operatorname{GL}_g(\ZZ)
\right),
$$
this map factors as
\begin{equation}\label{eq:rho-graph-Laplacian}
\rho_{g,t}
=
\theta_{g,t}\circ
(L_g)_*\circ
\alpha_g.
\end{equation}
Thus we compute the image of a graph cycle by taking its cellular
representative, pushing forward along the graph Laplacian map,
and applying the comparison $\theta_{g,t}$ from \eqref{eq: Quillen as PD quadratic forms}.
This factorization is used explicitly in
Section~\ref{sec: Comparing Mg and Ag through compact type}.

\section{The moduli space of curves of compact type}

Throughout this section, fix $g,n\geq 0$ with $2g-2+n>0$. We begin by
describing the boundary of the canonical normal-crossings compactification
$$
\mathcal M_{g,n}^{ct}
\subseteq
\overline{\mathcal M}_{g,n}.
$$

For the full Deligne--Mumford boundary, the associated symmetric dual
complex is naturally identified with the symmetric $\Delta$-complex of
stable graphs; see \cite[$\mathsection$6.1]{ChanGalatiusPayne2022} and
\cite[$\mathsection$7]{AbramovichCaporasoPayne2015}. Our first result identifies the dual complex of the boundary of
$\mathcal M_{g,n}^{ct}\subseteq\overline{\mathcal M}_{g,n}$
with the symmetric $\Delta$-complex of bridgeless stable graphs. Although this identification appears to be well known to experts, we were unable to find a suitable reference in the literature, so we include a proof.

\subsection{The dual complex of the boundary}\label{subsec: dual complex of boundary}We start by recalling the category of 
$\textit{symmetric}~\Delta$-$\textit{complexes}$ (see \cite{ChanGalatiusPayne2021}), i.e., the category  $\mathrm{Fun}(\textbf{FI}^{\mathrm{op}}, \textbf{Set})$, where $\textbf{FI}$ is the category of finite sets with injections. Let $q \geq -1$ be an integer and write
$$
[q]=\{0, \ldots, q\}. 
$$
When $q=-1$, we define the special case $[-1]=\emptyset$. Given any $X: \mathbf{FI}^{\mathrm{op}} \to \mathbf{Set}$ and any integer $q \geq -1$, we write
$
X_q:= X([q])
$
for the set of $\textit{q}$-$\textit{simplices}$ of $X$.
\begin{definition}
    Fix $g,n$ such that $2g-2+n>0$. We define a symmetric $\Delta$-complex
    \begin{equation*}
        \Delta^{bl}_{g,n}: \mathbf{FI}^{\mathrm{op}} \to \mathbf{Set}
    \end{equation*}
    as follows. 
    For each $q\geq -1$, the set $(\Delta^{bl}_{g,n})_q$ is the set of isomorphism classes of pairs $(\Gamma, \omega)$, where
    \begin{enumerate}
        \item $\Gamma$ is a stable $n$-marked graph of genus $g$ having no bridge edges; and 
        \item $\omega: [q] \to E(\Gamma)$ is a bijection, called an $\textit{edge labeling}$.
    \end{enumerate}
    An isomorphism of pairs $(\Gamma, \omega) \to (\Gamma', \omega')$ is a graph isomorphism $\phi: \Gamma \to \Gamma'$ such that $\omega'=\phi \circ \omega$ as maps $[q] \to E(\Gamma')$.
    
    Suppose that $i: [q'] \to [q]$ is an injection. The corresponding face map contracts every edge whose label is not in the image of $i$. The surviving edges inherit labels from $[q']$. Edge contraction is understood in the category of vertex-weighted stable graphs. 
    
    Note that contracting edges in a bridgeless graph produces another bridgeless graph, so these face maps are well defined. 
\end{definition}

\begin{proposition}\label{Boundary indexed by bridgeless graphs}
There is a natural isomorphism of symmetric $\Delta$-complexes

$$
\Delta^{bl}_{g,n}
\cong
\Delta(\Mct\subset\Mbar).
$$

\end{proposition}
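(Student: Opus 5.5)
The plan is to deduce the statement from the known identification for the full Deligne--Mumford boundary, \cite[$\mathsection$6.1]{ChanGalatiusPayne2022} and \cite[$\mathsection$7]{AbramovichCaporasoPayne2015}. That result gives $\Delta(\M_{g,n}\subset\Mbar)\cong\Delta_{g,n}$, the symmetric $\Delta$-complex of all stable $n$-marked graphs of genus $g$. In it, a $q$-simplex is a pair $(\Gamma,\omega)$ with $\omega:[q]\to E(\Gamma)$ a bijection. It corresponds to a codimension $q+1$ stratum of $\Mbar$ (curves with dual graph $\Gamma$), together with a labeling of the local branches of $\partial\Mbar$ through it. The branch indexed by an edge $e$ is the locus where the node corresponding to $e$ persists. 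Face maps correspond to forgetting branches, i.e.\ smoothing the nodes indexed by the omitted labels, which is contraction of those edges. So $\Delta^{bl}_{g,n}$ sits inside $\Delta_{g,n}$ as the subfunctor of bridgeless graphs. I would show that $\Delta(\Mct\subset\Mbar)$ is exactly this subfunctor, via the compatible description of both dual complexes as built from strata and branches of the respective boundary divisors.

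\textbf{Step 1.} First identify the local branches of $D=\Mbar\setminus\Mct$. At a point $[C]\in\Mbar$ with dual graph $\Gamma$, the étale-local picture is the versal deformation space. In it, $\partial\Mbar$ is the union of the coordinate hyperplanes $t_e=0$, $e\in E(\Gamma)$, and $t_e=0$ is the locus where node $e$ survives. A nearby curve has dual graph $\Gamma/S$, where $S$ is the set of edges $e$ with $t_e\neq0$. The curve is of compact type iff $\Gamma/S$ is a tree. An edge survives in $\Gamma/S$ as a bridge iff it was a bridge of $\Gamma$: contraction preserves the cut property, and cycles through an edge survive contraction of other edges. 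Hence $\Gamma/S$ is a tree iff every non-bridge edge of $\Gamma$ lies in $S$. So locally $D=\bigcup_{e\text{ non-bridge}}\{t_e=0\}$. Consequently $D$ is a normal crossings divisor whose local branches at $[C]$ are indexed by the non-bridge edges of $\Gamma$.

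\textbf{Step 2.} Next identify the strata. A stratum of $D$ of codimension $q+1$ is, locally, an intersection of $q+1$ of these branches, i.e.\ a choice of $q+1$ non-bridge edges of $\Gamma$. Its generic point is a curve whose dual graph is $\Gamma/S$, where $S$ is the complementary set of edges. We must show that $\Gamma/S$ is bridgeless stable with edges exactly the chosen ones. It has the chosen edges as its edges. These remain non-bridges in $\Gamma/S$, since a cycle through $e$ in $\Gamma$ maps to a closed walk through $e$ in the contraction. Conversely, any bridgeless stable graph $\Gamma'$ occurs in this way, with $\Gamma=\Gamma'$ and $S=\emptyset$, its stratum $D_{\Gamma'}$ lying in $D$ because it has a cycle. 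Thus $q$-simplices of $\Delta(\Mct\subset\Mbar)$, i.e.\ pairs consisting of a stratum and an ordering of its branches, match isomorphism classes $(\Gamma',\omega)$ with $\Gamma'$ bridgeless. The matching of monodromy, i.e.\ automorphisms acting on labelings, is inherited from the full Deligne--Mumford case. Face maps forget branches, which again means contracting the unlabeled edges; this is compatible with the inclusion into $\Delta_{g,n}$. Naturality in $\mathbf{FI}^{\mathrm{op}}$ follows.

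\textbf{Main obstacle.} The main obstacle is Step 1's combinatorial check that contraction neither creates nor destroys bridges among the surviving edges. A second difficulty is making rigorous that the dual complex in the sense of \cite{ChanGalatiusPayne2021} for $D$ is the corresponding sub-symmetric $\Delta$-complex. The clean way is to note that the branches of $D$ are a subset of the branches of $\partial\Mbar$ at every point, so the dual complex of $D$ is the subfunctor of $\Delta(\M_{g,n}\subset\Mbar)$ on simplices all of whose labeled branches lie in $D$. For bridge contraction I would argue via $b_1$: if $e$ is not a loop, then $e$ is a bridge of $\Gamma$ iff $e$ is a bridge of $\Gamma/f$, for any edge $f\neq e$.
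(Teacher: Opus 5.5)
Your proposal is correct and follows essentially the paper's route: you realize $\Delta(\Mct\subset\Mbar)$ as the sub-symmetric $\Delta$-complex of the Deligne--Mumford boundary complex on simplices all of whose labeled branches lie in $D$, and you show that the branch $t_e=0$ lies in $D$ exactly when $e$ is not a bridge. The only difference is in how that last fact is established: the paper identifies $D=\Delta_{\mathrm{irr}}$ and reads off the generic dual graph $\Gamma/(E(\Gamma)\setminus\{e\})$ of the branch (a loop versus a separating edge), whereas you compute $D$ directly in the versal chart, using that bridge status is invariant under contracting other edges.
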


\begin{proof}
Let $
B:=\Mbar\setminus\M_{g,n}
$
be the full Deligne--Mumford boundary, and let
$$
D:=\Mbar\setminus\Mct.
$$

If $g=0$, then $\M_{0,n}^{ct}=\Mb_{0,n}$, while a bridgeless stable graph of
genus zero has no edges. Thus both sides have only their
$(-1)$-simplex, and the result is immediate. We may therefore assume
that $g\geq1$.

A stable curve is of compact type if and only if its dual graph is a
tree. Consequently, the boundary divisor $D$ is the
irreducible boundary divisor $\Delta_{\mathrm{irr}}$, that is, the
closure of the locus of curves with a nonseparating node.

By the standard identification of the boundary complex of the
Deligne--Mumford compactification, the $q$-simplices of $\Delta(B)$
are naturally identified with isomorphism classes of pairs
$(\Gamma,\omega)$, where $\Gamma$ is a stable $n$-marked weighted
graph of genus $g$ with $q+1$ edges and
$$
\omega\colon [q]\xrightarrow{\sim}E(\Gamma)
$$
is an edge labeling. Under this identification, the face maps are given
by contraction of the edges whose labels are forgotten; see
\cite[$\mathsection$6.1]{ChanGalatiusPayne2022} and
\cite[$\mathsection$5.2]{ChanGalatiusPayne2021}.

Since $D=\Delta_{\mathrm{irr}}$ is an irreducible component of $B$,
the normalization of $D$ is an open and closed component of the
normalization of $B$. It follows that $\Delta(D)$ is the symmetric
$\Delta$-subcomplex of $\Delta(B)$ consisting of those simplices for
which every chosen local branch belongs to
$\Delta_{\mathrm{irr}}$.

Therefore, it remains to determine which edges of $\Gamma$ correspond
to branches of $\Delta_{\mathrm{irr}}$. To that end, let $C$ be a
curve with dual graph $\Gamma$. By deformation theory of stable
curves, there is an étale neighborhood of $C$ in $\Mbar$ with one
smoothing parameter $t_e$ for each edge $e\in E(\Gamma)$. The
divisor $t_e=0$ is the local branch along which the node corresponding
to $e$ persists. At the generic point of this divisor, every other
node has been smoothed, and hence the dual graph is
$$
\Gamma'
:=
\Gamma/\bigl(E(\Gamma)\setminus\{e\}\bigr);
$$
that is, $\Gamma'$ is obtained by contracting every edge of
$\Gamma$ except $e$; see
\cite[$\mathsection$7.1]{AbramovichCaporasoPayne2015}.

If $e$ is a bridge, then $\Gamma'$ has two vertices joined by a
single edge. The remaining node is therefore separating, so the branch
$t_e=0$ belongs to a separating boundary divisor rather than to
$\Delta_{\mathrm{irr}}$. On the other hand, if $e$ is not a bridge,
then $\Gamma'$ has one vertex and one loop. The remaining node is
therefore nonseparating, so the branch $t_e=0$ belongs to
$\Delta_{\mathrm{irr}}$.

Thus the local branch indexed by $e$ belongs to $D$ if and only if
$e$ is not a bridge. It follows that the simplex represented by
$(\Gamma,\omega)$ belongs to $\Delta(D)$ if and only if $\Gamma$ is
bridgeless.

Under this bijection, the face maps agree. Specifically, forgetting a collection of
branches corresponds, under the identification of the full boundary
complex with stable graphs, to contracting the corresponding edges.
Hence these bijections assemble to an isomorphism of symmetric
$\Delta$-complexes
$$
\Delta^{bl}_{g,n}
\cong
\Delta(D)
=
\Delta(\Mct\subset\Mbar).
$$

\end{proof}

\subsection{The cone complex $M_{g,n}^{ct,\mathrm{trop}}$}

Let $M_{g,n}^{ct,\mathrm{trop}}$ denote the smooth generalized
cone complex associated to the toroidal pair
$
\mathcal M_{g,n}^{ct}
\subseteq
\overline{\mathcal M}_{g,n}.
$

If $g=0$, then $M_{0,n}^{ct,\mathrm{trop}}$ is a point and
$\mathcal M_{0,n}^{ct}=\overline{\mathcal M}_{0,n}$.
Both sides of \eqref{Tropical is weight zero borel more} are
$\QQ$ for $k=0$ and vanish for $k>0$.
We may therefore assume that $g\geq1$ in the argument below.

Applying \cite[Theorem~5.8]{ChanGalatiusPayne2021} to this
compactification, using Poincaré duality and
Proposition~\ref{Boundary indexed by bridgeless graphs}, gives a natural
isomorphism
$$
\left(
W_0H_c^k
\left(
\mathcal M_{g,n}^{ct}
\right)
\right)^\vee
\cong
\widetilde H_{k-1}
\left(
\Delta_{g,n}^{{bl}}
\right).
$$

By \cite[Corollary~5.6]{ChanGalatiusPayne2021}, the link of
$M_{g,n}^{ct,\mathrm{trop}}$ at its cone point is naturally
identified 
with $\lvert\Delta_{g,n}^{{bl}}\rvert$.
Equivalently, $M_{g,n}^{ct,\mathrm{trop}}$ is obtained by
gluing the cones
$
\mathbb R_{\geq0}^{E(\Gamma)}/\operatorname{Aut}(\Gamma)
$
along the faces induced by edge contractions, as $\Gamma$ ranges over
bridgeless stable $n$-marked graphs of genus $g$. Consequently, its
underlying topological space is the unbounded cone on $\lvert\Delta_{g,n}^{{bl}}\rvert$. Its one-point
compactification is therefore the unreduced suspension of
$\lvert\Delta_{g,n}^{{bl}}\rvert$, and hence
$$
\begin{aligned}
H_k^{\mathrm{BM}}
\left(
M_{g,n}^{ct,\mathrm{trop}}
\right)
&\cong
\widetilde H_k
\left(
\Sigma\lvert\Delta_{g,n}^{{bl}}\rvert
\right)\\
&\cong
\widetilde H_{k-1}
\left(
\Delta_{g,n}^{{bl}}
\right).
\end{aligned}
$$

Combining these two identifications yields a natural isomorphism
\begin{equation}\label{Tropical is weight zero borel more}
\kappa_{\mathcal M^{ct}}^{\vee}\colon
\left(
W_0H_c^k
\left(
\mathcal M_{g,n}^{ct}
\right)
\right)^\vee
\overset{\sim}{\longrightarrow}
H_k^{\mathrm{BM}}
\left(
M_{g,n}^{ct,\mathrm{trop}}
\right).
\end{equation}

\begin{remark}
The notation
$M_{g,n}^{ct,\mathrm{trop}}$ refers here to the generalized
cone complex associated to the boundary pair
$
\mathcal M_{g,n}^{ct}
\subset
\overline{\mathcal M}_{g,n}.
$
It should not be confused with the locus in the usual tropical moduli
space $M_{g,n}^{\mathrm{trop}}$ consisting of tropical curves
whose underlying graphs are trees. 
\end{remark}

\section{Weight zero compactly supported cohomology of $\Mct$}

By \eqref{eq: Canonical isomorphism between the kth graded piece and GKbl,k}, the bridgeless Getzler--Kapranov graph complex $\GKzero$ computes the weight zero compactly supported cohomology of $\Mct$. Each vertex $v$ is decorated by an element of
$$
H^0(\overline{\M}_{g_v,n_v})\cong\QQ.
$$

We label each vertex factor with $1$ and suppress these decorations. Thus, $\GKzero$ is spanned by stable bridgeless $n$-marked graphs $\Gamma$ of genus $g$, together with a total ordering of $E(\Gamma)$, subject to the relation that changing the edge ordering by a permutation multiplies the generator by the sign of the permutation. Consequently, any graph admitting an automorphism that induces an odd permutation of its edges represents zero in $\GKzero$.

\subsection{The weight zero differential} The differential $\delta = \deltasp+\deltal$ is a sum of local contributions at the vertices. Since all decorations on the vertices in the weight zero bridgeless complex are trivial, it amounts to splitting vertices and attaching loops at vertices in every allowable way. In other words, the differential acts just like the differential on the standard commutative graph complex with the exception that we only allow a vertex to split into a non-bridge edge. The contribution $\deltasp$ at a vertex $v$ of genus $g(v)=h$ is represented as
\begin{equation*}\label{eq:delta-split-undecorated}
\deltasp\colon\quad
\begin{tikzpicture}[baseline=-.65ex]
    \node[gkvertex] (v) at (0,0) {$\scriptstyle h$};
    \draw
        (v) edge +(-.3,-.3)
            edge +(-.3,0)
            edge +(-.3,.3)
            edge +(.3,-.3)
            edge +(.3,0)
            edge +(.3,.3);
\end{tikzpicture}
\longmapsto
\sum_{a+b=h}
\sum
\begin{tikzpicture}[baseline=-.65ex]
    \node[gkvertex] (v) at (0,0) {$\scriptstyle a$};
    \node[gkvertex] (w) at (0.5,0) {$\scriptstyle b$};
    \draw
        (v) edge (w)
        (v) edge +(-.3,-.3)
            edge +(-.3,0)
            edge +(-.3,.3)
        (w) edge +(.3,-.3)
            edge +(.3,0)
            edge +(.3,.3);
\end{tikzpicture}.
\end{equation*}

 For each decomposition $h=a+b$, the second sum ranges over all ways of distributing the half-edges incident to $v$ between the two new vertices for which both vertices are stable and the resulting graph is bridgeless. Similarly, the contribution of $\deltal$ at $v$ is represented by
\begin{equation*}\label{eq:delta-loop-undecorated}
\deltal\colon\quad
\begin{tikzpicture}[baseline=-.65ex]
    \node[gkvertex] (v) at (0,0) {$\scriptstyle h$};
    \draw
        (v) edge +(-.5,-.5)
            edge +(0,-.5)
            edge +(.5,-.5);
\end{tikzpicture}
\longmapsto
\begin{tikzpicture}[baseline=-.65ex]
    \node[gkvertex] (v) at (0,0) {$\scriptscriptstyle h-1$};
    \draw
        (v) edge[loop above,out=135, in=45,looseness=5,min distance=12mm] (v)
        (v) edge +(-.5,-.5)
            edge +(0,-.5)
            edge +(.5,-.5);
\end{tikzpicture},
\qquad h>0.
\end{equation*}
Here, the right hand side is zero if $h=0$. In both formulas, the newly created edge is placed first in the edge ordering, while the relative ordering of the remaining edges is unchanged. We now pass to a smaller quotient.

\begin{definition}
    Write
$\mathrm{GK}_{g,n}^{bl}:=
\bigoplus_k\mathrm{GK}_{g,n}^{bl,k}$ and let $I_{g,n}^{bl} \subset \mathrm{GK}_{g,n}^{bl}$ be the graded subspace spanned by graphs containing at least one vertex of positive genus. The \textit{simplified bridgeless Getzler--Kapranov graph complex} is defined to be the quotient
    \begin{equation}\label{GK bar}
        \overline{\mathrm{GK}}_{g,n}^{bl}= \mathrm{GK}_{g,n}^{bl}/ (I_{g,n}^{bl} + \delta I_{g,n}^{bl}).  \end{equation}
\end{definition}

In weight zero, $\GKbarzero$ is spanned by bridgeless stable graphs with only genus zero vertices and no loop edges. 

For the corresponding complexes in which bridges are allowed, the quotient map  
$$
\mathrm{GK}_{g,n}^{0}
\longrightarrow
\overline{\mathrm{GK}}_{g,n}^{0}
$$
is a quasi-isomorphism for every stable pair
$(g,n)$; see \cite{ChanGalatiusPayne2021,ChanGalatiusPayne2022} and \cite[$\mathsection$3.5]{PayneWillwacherWeight2}.
Theorem~\ref{thm:intro-weight zero-model} establishes the analogous statement for the bridgeless complex.

The heart of the proofs of
Theorems~\ref{thm:intro-weight zero-model} and
\ref{thm:intro-weight two-model}
begins with a filtration by the number of vertices. On the associated graded complex, only $\deltal$ survives. We compute its cohomology by decomposing it into subcomplexes indexed by loopless cores, each built from tensor products of local complexes attached to the individual vertices. These single-vertex complexes were introduced in
\cite[$\mathsection$4.1]{PayneWillwacherWeight2}. We recall them next and then adapt the filtration argument of
\cite[$\mathsection$4.3]{PayneWillwacherWeight2}
to the bridgeless setting.

\subsection{The single-vertex complex}

The single-vertex complex isolates the local action of $\deltal$ at one vertex while retaining the symmetries among the loops created there. Following \cite[(15)]{PayneWillwacherWeight2}, define
\begin{equation}\label{eq:single-vertex-complex}
V_{g,n}
:=
\bigoplus_{h=0}^{g}
\left(
H^\bullet(\Mb_{g-h,n+2h})
\otimes
\QQ[-1]^{\otimes h}
\right)_{\mathbb S_2\wr\mathbb S_h}.
\end{equation}
Here the last $2h$ marked points are grouped into $h$ pairs. Writing 
$
\mathbb S_2\wr\mathbb S_h
=
\mathbb S_2^h\rtimes\mathbb S_h,
$ the factors of $\mathbb S_2^h$ exchange the two markings within each pair, while $\mathbb S_h$ permutes the pairs and the corresponding factors of $\QQ[-1]$, with signs determined by the Koszul sign rule.

The differential on $V_{g,n}$ is induced by $\deltal$. The cohomological degree of the factor
$H^\bullet(\Mb_{g-h,n+2h})$ defines a weight grading preserved by the differential:

$$
V_{g,n}
=
\bigoplus_k V_{g,n}^k.
$$

\subsubsection{The weight zero single-vertex complex}

In weight zero, this becomes
$$
V_{g,n}^0
=
\bigoplus_{h=0}^{g}
\left(
H^0(\Mb_{g-h,n+2h})
\otimes
\QQ[-1]^{\otimes h}
\right)_{\mathbb S_2\wr\mathbb S_h}.
$$
For $h\geq2$, a transposition in $\mathbb S_h$ exchanges two odd factors of $\QQ[-1]$ and therefore acts by $-1$. The same transposition acts trivially on
$H^0(\Mb_{g-h,n+2h})\cong\QQ$. Hence the corresponding coinvariant space vanishes. The summand indexed by $h$ is therefore
$$
\left(
H^0(\Mb_{g-h,n+2h})
\otimes
\QQ[-1]^{\otimes h}
\right)_{\mathbb S_2\wr\mathbb S_h}
\cong
\begin{cases}
\QQ, & h=0,\\
\QQ[-1], & h=1,\\
0, & h\geq2.
\end{cases}
$$
When $g \geq 1$, the differential from the $h=0$ to the $h=1$ summand is non-zero. Thus the differential is an isomorphism. When $g=0$, the summands with $h \geq 1$ vanish. We conclude that
\begin{equation}\label{eq:weight zero-single-vertex-cohomology}
H(V_{g,n}^0)
\cong
\begin{cases}
\QQ, & g=0 \text{ and } n\geq3,\\
0, & \text{otherwise},
\end{cases}
\end{equation}
as in \cite[(16)]{PayneWillwacherWeight2}.

With this local calculation in hand, we now prove
Theorem~\ref{thm:intro-weight zero-model}.

\begin{proof}[Proof of Theorem~\ref{thm:intro-weight zero-model}]
Let
$
q\colon\GKzero\longrightarrow\GKbarzero
$
denote the quotient map. We filter both complexes by the number of vertices. Let
$\F^p\GKzero$ and $\F^p\GKbarzero$ be the subcomplexes spanned by graphs with at least $p$ vertices. These are finite decreasing filtrations, and $q$ is a filtered map. 

The differential $\deltal$ preserves the number of vertices, whereas
$\deltasp$ increases it by one. It follows that the associated graded complexes are
$$
\operatorname{gr}_{\F}\GKzero
\cong
(\GKzero,\deltal)
\qquad\text{and}\qquad
\operatorname{gr}_{\F}\GKbarzero
\cong
(\GKbarzero,0).
$$

We compute the cohomology of the first complex by organizing its graphs according to their loopless cores. The
\emph{loopless core} of a stable graph $G$ is obtained by contracting every loop edge and increasing the genus of its incident vertex by one for each contracted loop. This operation preserves the total genus, stability, and bridgelessness. Notice that the differential $\deltal$ does not change the loopless core of a graph.

For a loopless stable bridgeless graph $\Gamma$, let
$$
(\mathrm{GK}_{\Gamma}^{{bl},0},\deltal)
\subseteq
(\GKzero,\deltal)
$$
be the subcomplex spanned by graphs with loopless core $\Gamma$. We obtain a decomposition
$$
(\GKzero,\deltal)
=
\bigoplus_{[\Gamma]}
(\mathrm{GK}_{\Gamma}^{{bl},0},\deltal),
$$
where the sum runs over isomorphism classes of such graphs.

Fix a loopless core $\Gamma$. For each $v\in V(\Gamma)$, let $g_v$ be its genus and let $n_v$ be the number of half-edges incident to it. Since $\deltal$ acts independently at each vertex, there is an isomorphism
$$
(\mathrm{GK}_{\Gamma}^{{bl},0},\deltal)
\cong
\bigg(
\bigotimes_{v\in V(\Gamma)}
V_{g_v,n_v}^0
\otimes
\QQ[-1]^{\otimes|E(\Gamma)|}
\bigg)_{\operatorname{Aut}(\Gamma)}.
$$

The K\"unneth formula, together with the fact that taking coinvariants with respect to finite group actions commutes with taking cohomology, gives

$$
H(\mathrm{GK}_{\Gamma}^{{bl},0},\deltal)
\cong
\bigg(
\bigotimes_{v\in V(\Gamma)}
H(V_{g_v,n_v}^0)
\otimes
\QQ[-1]^{\otimes |E(\Gamma)|}\bigg)
_{\operatorname{Aut}(\Gamma)}.
$$

By \eqref{eq:weight zero-single-vertex-cohomology}, this space vanishes whenever $\Gamma$ contains a vertex of positive genus. If every vertex has genus zero, stability gives $n_v\geq3$, and each factor
$H(V_{0,n_v}^0)\cong\QQ$ is represented by the summand with no local loops. Consequently,
$$
H(\GKzero,\deltal)
\cong
\bigoplus_{\substack{[\Gamma]\\g_v=0\text{ for all }v}}
\big(\QQ[-1]^{\otimes|E(\Gamma)|}\big)
_{\operatorname{Aut}(\Gamma)}
\cong
\GKbarzero
$$
as graded vector spaces. 

The final isomorphism is induced by $q$. In particular, for a loopless core $\Gamma$ whose vertices all have genus zero, the surviving class is represented by $\Gamma$ itself, which $q$ sends to the corresponding generator of $\GKbarzero$. Thus $q$ induces an isomorphism on the $E_1$-pages.
$$
\bigl(H(\GKzero,\deltal),\deltasp\bigr)
\cong
(\GKbarzero,\deltasp).
$$
Since the filtrations are finite, the spectral-sequence comparison theorem \cite[Theorem~5.2.12]{WeibelHomologicalAlgebra} shows that $q$ is a quasi-isomorphism. 
\end{proof}

\begin{remark}
There is another useful way to think about the complex above. When
$n=0$, the graded dual of
$\overline{\mathrm{GK}}_{g,0}^{{bl},0}$ identifies, after
translating the grading and orientation conventions, with the
loop-order-$g$ summand of the even simple bridgeless graph complex
studied by Willwacher in \cite{WillwacherBridgeless}. Thus Willwacher
studies essentially the same complex in the dual convention. His
edge-contraction differential is dual to the edge-expansion differential
used here.

His alternative description begins with a simple observation. If one
cuts a connected graph at all of its bridges, the remaining bridgeless
components are arranged along a tree. In operadic language, this
identifies the ordinary graph complex with external legs as the cyclic
bar construction on its bridgeless cyclic suboperad.

Applying the cyclic cobar construction to this description produces
a complex whose elements may be pictured as trees with vertices
decorated by graphs from the ordinary graph complex with external legs;
the decorating graphs need not themselves be bridgeless. Its differential
has two parts. One applies the usual graph differential within each
decoration, while the other splits vertices of the indexing tree.
Willwacher proves that this decorated-tree complex is quasi-isomorphic
to the bridgeless graph complex. Specializing to arity zero gives the
usual bridgeless graph complex, and the same construction applies to
the simple variant considered here; see
\cite[(3), Theorem~1.1, and Theorem~3.7]{WillwacherBridgeless}.

We independently computed the homology of the graded-dual simple
bridgeless complex through genus $7$. After translating the grading
conventions, our dimensions agree with the even case of
\cite[Figure~1]{WillwacherBridgeless}. 
\end{remark}

We return to the additional structure carried by weight zero cohomology in Section~\ref{sec: Comparing Mg and Ag through compact type}. For now, we turn to the next nontrivial weight-graded piece,
$\operatorname{gr}_2^W H_c^\bullet(\Mct)$.

\section{Weight two compactly supported cohomology of $\Mct$}\label{sec: weight two compactly supported cohomology of Mct}

We now study the second graded piece of the compactly supported cohomology of $\Mct$. By
\eqref{eq: Canonical isomorphism between the kth graded piece and GKbl,k}, there is a natural isomorphism
$$
\operatorname{gr}_2^W
H_c^\bullet(\Mct)
\cong
H^\bullet(\GKtwo).
$$

A generator of $\GKtwo$ consists of a stable bridgeless $n$-marked graph $\Gamma$ of genus $g$, together with a decoration at each vertex $v$,
$$
\alpha_v\in H^{k_v}(\Mb_{g_v,n_v}),
\qquad
\sum_{v\in V(\Gamma)}k_v=2.
$$
It also carries an orientation of its structural edges, represented by a total ordering of $E(\Gamma)$. As usual, permuting this ordering by $\sigma$ multiplies the generator by $\operatorname{sgn}(\sigma)$. Here the structural edges are the internal edges of $\Gamma$, as opposed to its $n$ marked legs.

Since
$
H^1(\Mb_{h,m})=0
$
for every stable pair $(h,m)$, there is only one possible distribution of the decoration degrees. Precisely one vertex is decorated by a class in
$H^2(\Mb_{g_v,n_v})$, while all remaining vertices are decorated by $1$ in $H^0$. We call this distinguished vertex the \emph{special vertex}. Thus, a generator of $\GKtwo$ may be represented by a stable bridgeless $n$-marked graph with one special vertex carrying an $H^2$-decoration. In our graphical notation, we omit the ordering of the structural edges and indicate the special vertex by a double circle.

To describe the possible decorations at the special vertex, we recall the presentation of
$H^2(\overline{\mathcal M}_{g,n})$ given in
\cite[Theorem~2.2]{ArbarelloCornalba1998}; see also \cite[$\mathsection$3.1]{PayneWillwacherWeight2}. 

Set $[n]=\{1,\ldots,n\}$. The group
$H^2(\overline{\mathcal M}_{g,n})$ is generated by
$$
\kappa_1,\qquad
\psi_1,\ldots,\psi_n,\qquad
\delta_{{irr}},\qquad
\delta_{a,A},
$$
where $A\subseteq[n]$, $0\leq a\leq g$, 
$
2a-2+|A|\geq0$, and 
$2(g-a)-2+|A^c|\geq0.
$
The boundary classes satisfy the symmetry relation
$
\delta_{a,A}=\delta_{g-a,A^c}.
$
Write

$$
\psi:=\sum_{i=1}^n\psi_i,
\qquad
\delta_a:=\sum_A\delta_{a,A},
$$
where the second sum ranges over the distinct stable boundary divisors. When
$g=2a$, each class
$\delta_{a,A}=\delta_{a,A^c}$ is counted only once. Modulo the symmetry
relations above, a generating set for the remaining relations is as follows.

\begin{enumerate}
\item If $g\geq3$, there are no further relations.

\item If $g=2$, there is one additional relation:
$$
5\kappa_1
=
5\psi+\delta_{{irr}}-5\delta_0+7\delta_1.
$$

\item If $g=1$, the additional relations are
$$
\kappa_1=\psi-\delta_0,
$$

$$
12\psi_p
=
\delta_{{irr}}
+
12\sum_{\substack{S\ni p\\ |S|\geq2}}\delta_{0,S}.
$$

\item If $g=0$, then $\delta_{{irr}}=0$, and for every
choice of distinct $x,y,z\in[n]$,
$$
\kappa_1
=
\sum_{A \not\ni x,y}
(|A|-1)\delta_{0,A},
\qquad
\psi_z
=
\sum_{z \in A \not\ni x,y}
\delta_{0,A}.
$$
Here $z$ and $x \neq y$ run over elements of $\{1,\ldots,n\}$.

\end{enumerate}

For our later combinatorial calculations, the following equivalent presentation of the genus zero case will be more convenient. 

\begin{lemma}~\cite[Lemma~3.4]{PayneWillwacherWeight2}\label{lem:psi and boundary rep for H^2 M0,n}
    The cohomology group $H^2(\Mb_{0,n})$ is generated by boundary classes $\delta_{0,A}$ for subsets $A\subset \{1,\ldots, n\}$ with $2 \leq |A| \leq n-2$, and the $\psi$-classes $\psi_1,\ldots, \psi_n$ modulo the following relations
    \begin{enumerate}
        \item $\delta_{0,A}=\delta_{0,A^c}$,
        \item  $\psi_i+\psi_j=\sum_{i\in A, j \in A^c}\delta_{0,A}$.
    \end{enumerate}
\end{lemma}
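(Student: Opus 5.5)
The plan is to compare the presentation in the statement with Keel's presentation of $H^2(\Mb_{0,n})$ \cite{Keel1992}. Keel's presentation says that $H^2(\Mb_{0,n})$ is spanned by the boundary classes $\delta_{0,A}$, with $2\le|A|\le n-2$ and $\delta_{0,A}=\delta_{0,A^c}$. The only further relations are the four-point relations: for distinct $i,j,k,l$,
$$
\sum_{i,j\in A,\;k,l\notin A}\delta_{0,A}
=
\sum_{i,k\in A,\;j,l\notin A}\delta_{0,A}.
$$
Let $P$ be the abstract group generated by the symbols $\delta_{0,A}$ and $\psi_i$ modulo relations (1) and (2). I will build a surjection $P\to H^2(\Mb_{0,n})$ and then show it is injective by reducing $P$ to Keel's presentation. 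The case $n=3$ is degenerate: $H^2(\Mb_{0,3})=0$, there are no admissible $A$, and relation (2) forces $2\psi_i=0$ for every $i$. So assume $n\ge4$.

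\emph{Step 1: the map is well defined and surjective.} Send each generator to the class of the same name. Relation (1) holds in cohomology. For relation (2), use the genus zero formula recalled above, $\psi_z=\sum_{z\in A\not\ni x,y}\delta_{0,A}$.
\begin{itemize}
\item Take $z=i$, $x=j$, $y=k$ for an auxiliary index $k$. This gives the terms with $i\in A$ and $j,k\notin A$.
\item Take $z=j$, $x=i$, $y=k$. After replacing $A$ by $A^c$, this gives the terms with $i,k\in A$ and $j\notin A$.
\end{itemize}
Adding the two, the auxiliary index $k$ disappears and we get exactly $\sum_{i\in A,\,j\notin A}\delta_{0,A}$. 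Surjectivity follows because $\kappa_1$ and the $\psi_z$ are already expressed in boundary classes, and the boundary classes lie in the image.

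\emph{Step 2: $P$ is generated by boundary symbols.} For three distinct indices $i,j,k$, relation (2) gives
$$
2\psi_i=(\psi_i+\psi_j)+(\psi_i+\psi_k)-(\psi_j+\psi_k).
$$
Working over $\QQ$, each $\psi_i$ is therefore a linear combination of the $\delta_{0,A}$. Consequently $P$ is the quotient of the span $\langle\delta_{0,A}\rangle/(\delta_{0,A}=\delta_{0,A^c})$ by those linear relations among the boundary symbols that are consequences of (2).

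\emph{Step 3: identify these induced relations with Keel's.} Relation (2) says that the symmetric function $s(i,j):=\psi_i+\psi_j$ equals the given boundary sum $b(i,j)$. Such a decomposition exists exactly when $b$ has the form $\beta_i+\beta_j$, which in turn is equivalent to the compatibility
$$
b(i,j)+b(k,l)=b(i,k)+b(j,l)\quad\text{for all distinct } i,j,k,l.
$$
Conversely, these compatibilities imply that the ``solved'' values of $\psi_i$ from Step 2 are independent of the auxiliary choices. With those values, (2) holds identically, so no further relations are imposed.

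It remains to expand the compatibility. Both sides of $b(i,j)+b(k,l)=b(i,k)+b(j,l)$ are sums of $\delta_{0,A}$, sorted by how $A$ (up to complement) meets $\{i,j,k,l\}$.
\begin{itemize}
\item Subsets separating the four points as a $1|3$ split contribute equally to both sides and cancel.
\item Subsets inducing a $2|2$ split contribute according to which pairing they realize.
\end{itemize}
After cancellation, the identity becomes
$$
2\sum_{ik|jl}\delta_{0,A}=2\sum_{ij|kl}\delta_{0,A},
$$
up to relabeling; here $ik|jl$ denotes the subsets $A$ whose $2|2$ split of $\{i,j,k,l\}$ pairs $i$ with $k$ and $j$ with $l$, and $ij|kl$ is defined likewise. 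This is Keel's four-point relation. Hence $P$ is the boundary span modulo Keel's relations, and the map of Step 1 is an isomorphism by Keel's theorem.

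The main obstacle is the combinatorial bookkeeping in Step 3. One has to check carefully which splits of $\{i,j,k,l\}$ contribute to each of the four terms $b(\cdot,\cdot)$, with the correct multiplicities after the identification $A\sim A^c$. One must also confirm that the Step 2 solutions for the $\psi_i$ are consistent, so that relation (2) introduces nothing beyond the four-point relations.
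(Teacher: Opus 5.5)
Your argument is correct. The paper gives no proof of its own here; it quotes \cite[Lemma~3.4]{PayneWillwacherWeight2} immediately after recalling the Arbarello--Cornalba genus zero presentation. So the natural comparison is with deriving the lemma from that presentation: eliminate $\kappa_1$, then show that relation (2) is equivalent to the family $\psi_z=\sum_{z\in A\not\ni x,y}\delta_{0,A}$.

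Your Step~1 is one direction of that equivalence. The converse is a one-line count, $2\psi_z=b(z,x)+b(z,y)-b(x,y)$: a boundary divisor contributes $2$ if it separates $z$ from $\{x,y\}$ and $0$ otherwise. That route bypasses your Step~3, namely the four-point bookkeeping and the check that the solved $\psi_i$ do not depend on the auxiliary choices. In exchange, it relies on the Arbarello--Cornalba relation list being complete, which in genus zero is essentially Keel's theorem again. One must also check that the $\kappa_1$-relations for different pairs $(x,y)$ add nothing beyond (1) and (2).

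Starting directly from Keel \cite{Keel1992}, your route gives a sharper picture. The $\psi_i$ are redundant generators, and relation (2) encodes exactly Keel's four-point relations, each multiplied by $2$; this is where rational coefficients are genuinely needed. Your split count is right: the $1|3$ splits and the $il|jk$ splits cancel, leaving $2\sum_{ik|jl}\delta_{0,A}-2\sum_{ij|kl}\delta_{0,A}$. For $n\geq 4$ the four-point compatibilities do suffice to write $b(i,j)=\beta_i+\beta_j$, so the argument closes.
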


The differential
$\delta=\deltasp+\deltal$ on $\GKtwo$ is the sum of its local contributions at the vertices of a generator. At a nonspecial vertex, nothing new occurs. The differential agrees with the weight zero
differential, splitting the vertex and creating loops in all allowable ways.
At the special vertex, the $H^2$-decoration must also be pulled back along
the corresponding gluing or clutching morphism. We now recall the maps and
pullback formulas needed to describe these contributions.

It will be convenient to index marked points by arbitrary finite sets.
For a finite set $S$, write $\M_{g,S}$ for the moduli space of smooth
genus $g$ curves whose markings are indexed by $S$, and
$\Mb_{g,S}$ for its Deligne--Mumford compactification. Let
$A\subseteq S$, set $A^c:=S\setminus A$, and choose new labels
$q,r,t,t'\notin S$. Fix a stable
$(A^c\cup\{r\})$-marked curve $C_0$ of genus $h-a$. Gluing $C_0$
to the varying curve by identifying the markings $q$ and $r$ defines
\begin{equation}\label{eq: theta gluing map}
\theta\colon
\Mb_{a,A\cup\{q\}}
\longrightarrow
\Mb_{h,S},
\qquad
C\longmapsto C\mathop{\cup}_{q\sim r}C_0.
\end{equation}

The induced pullback $\theta^*$ on cohomology is independent of the choice
of $C_0$. Similarly, identifying the two markings $t$ and $t'$ defines
the self-gluing morphism
\begin{equation}\label{eq: xi clutching map}
\xi\colon
\Mb_{h-1,S\cup\{t,t'\}}
\longrightarrow
\Mb_{h,S}.
\end{equation}

The map $\theta$ governs the contribution of $\deltasp$ at the special
vertex, while $\xi$ governs the contribution of $\deltal$. The relevant
pullback formulas are recorded in
\cite[Lemmas~3.2 and~3.3]{ArbarelloCornalba1998}.

We begin with
$\theta^*$: 
\begin{equation}\label{eq: theta^* for k_1, deltairr, psi}
\theta^*(\kappa_1)=\kappa_1,
\qquad
\theta^*(\delta_{irr})=\delta_{irr}, \qquad \theta^*(\psi_i)
=
\begin{cases}
\psi_i & \text{if } i\in A,\\
0      & \text{if } i\in A^c.
\end{cases}
\end{equation}

It remains to describe the pullback of the boundary classes
$\delta_{b,B}$. First
suppose that $A=S$. Then
\begin{equation}\label{eq: theta^* boundary sc A=S}
\theta^*(\delta_{b,B})
=
\begin{cases}
\delta_{2a-h,S\cup\{q\}}-\psi_q
&
\text{if }(b,B)\in
\bigl\{(a,S),(h-a,\emptyset)\bigr\},
 \\
\delta_{b,B}+\delta_{b+a-h,B\cup\{q\}}
&
\text{otherwise}.
\end{cases}
\end{equation}

If $A\neq S$, then
\begin{equation}\label{eq: theta^* for boundary A neq S}
\theta^*(\delta_{b,B})
=
\begin{cases}
-\psi_q
&
\text{if }(b,B)\in
\bigl\{(a,A),(h-a,A^c)\bigr\},\\
\delta_{b,B}
&
\text{if }B\subseteq A
\text{ and }(b,B)\neq(a,A),\\
\delta_{b+a-h,(B\setminus A^c)\cup\{q\}}
&
\text{if }A^c\subseteq B
\text{ and }(b,B)\neq(h-a,A^c),\\
0
&
\text{otherwise}.
\end{cases}
\end{equation}

For the self-gluing morphism $\xi$, the pullback formulas are

\begin{equation}\label{eq: pullback for clutching map xi} 
\begin{aligned}
    \xi^*(\kappa_1)&=\kappa_1,\\
    \xi^*(\psi_i)&=\psi_i,\\
    \xi^*(\delta_{irr})
&=
\delta_{irr}
-\psi_t-\psi_{t'}
+
\sum_{
t\in B,\;t'\notin B}
\delta_{b,B},\\
\xi^*(\delta_{a,A})
&=
\begin{cases}
\delta_{a,A}
&
\text{if }h=2a\text{ and }A=S=\emptyset,\\
\delta_{a,A}
+\delta_{a-1,A\cup\{t,t'\}}
&
\text{otherwise}.
\end{cases}
\end{aligned}
\end{equation}

\subsection{The graphical differential}
We use the graphical conventions of
\cite[$\mathsection\mathsection$3.2--3.3]{PayneWillwacherWeight2}.
We draw the special vertex with a double circle. If the decoration is
$\kappa_1$ or $\delta_{irr}$, we simply write the class next to the special vertex. If the decoration is $\psi_i$, we indicate this decoration by an arrow on
the corresponding $i$-th half-edge. A boundary decoration
$\delta_{a,A}$ is represented by replacing the special vertex by
vertices of genera $a$ and $h-a$, joined by a distinguished edge,
with the half-edges in $A$ and $A^c$ attached to the respective
vertices. The distinguished edge records the decoration and is not a
structural edge.

With these conventions, the pullback formulas above translate directly into a graphical differential. For example, the contribution of
$\deltasp$ to a boundary-decorated vertex is
\begin{align*}
    \deltasp: \graphab \mapsto &\sum_{b'+b''=b} \sum  \graphabsl + \sum_{a'+a''=a} \sum\graphabsr\\ &-\graphabpsil -\graphabpsir.
\end{align*}
In each of the first two terms, the inner sum ranges over all distributions of the incident half-edges for which the resulting vertices are stable. The remaining graphical formulas for
$\deltasp$ and $\deltal$ are given in
\cite[$\mathsection$3.3]{PayneWillwacherWeight2}. The same local
formulas apply here, except that we retain only those
summands whose underlying structural graph is bridgeless.

We next introduce two auxiliary graph complexes that isolate the two contributions appearing in the cohomology of $\GKtwo$.

\subsection{The reduced weight two graph complex}

Taking the weight two part of the quotient \eqref{GK bar}, we obtain the
\textit{reduced weight two bridgeless Getzler--Kapranov graph complex}
$$
\GKbartwo
=
\frac{\GKtwo}
{I_{g,n}^{{bl},2}+\delta I_{g,n}^{{bl},2}}.
$$

The complex is generated by bridgeless graphs whose vertices all have genus zero, with no loops at nonspecial vertices. Loops at the special
vertex are allowed, subject to the following additional relation. If
$s$ and $t$ are the two half-edges of such a loop, then
$\delta_{0,A}$ vanishes whenever
$$
\{s,t\}\subseteq A
\qquad\text{or}\qquad
\{s,t\}\subseteq A^c.
$$
These two cases are interchanged by the symmetry
$\delta_{0,A}=\delta_{0,A^c}$. 

\subsection{The $\Delta^{\mathrm{irr}}_{g,n}$-complex}
 Let
$$
\Delta_{g,n}^{\mathrm{irr}}\subset\GKtwo
$$
be the graded subspace spanned by bridgeless graphs with no loops, whose special
vertex has genus one and is decorated by $\delta_{irr}$, and
whose remaining vertices all have genus zero. 
\begin{lemma}\label{lem:Delta_irr is a dg subspace}
The graded subspace
$\Delta_{g,n}^{\mathrm{irr}}\subset\GKtwo$
 is a dg subspace. Its differential is the restriction of $\deltasp$.
\end{lemma}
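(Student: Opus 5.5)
The plan is to check directly that the differential $\delta=\deltasp+\deltal$ of $\GKtwo$ maps each generator of $\Delta_{g,n}^{\mathrm{irr}}$ into $\Delta_{g,n}^{\mathrm{irr}}$. Along the way I will show that the $\deltal$ contribution vanishes identically, which gives both claims of Lemma~\ref{lem:Delta_irr is a dg subspace}. Fix a generator $\Gamma$: a loopless bridgeless stable graph with one special vertex $v_0$ of genus one decorated by $\delta_{irr}$, and all other vertices of genus zero decorated by $1$. Since $\delta$ is a sum of local contributions at the vertices, I treat each vertex and each piece of the differential separately.

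\textbf{Loop term.} At a nonspecial vertex the genus is zero, so $\deltal$ gives zero there. At $v_0$, with half-edge set $S$, $\deltal$ produces a genus zero special vertex with a new loop with half-edges $t,t'$. Its decoration is $\xi^*(\delta_{irr})$ for $\xi\colon\Mb_{0,S\cup\{t,t'\}}\to\Mb_{1,S}$. By \eqref{eq: pullback for clutching map xi},
$$
\xi^*(\delta_{irr})
=\delta_{irr}-\psi_t-\psi_{t'}+\sum_{t\in B,\,t'\notin B}\delta_{0,B}.
$$
In genus zero $\delta_{irr}=0$. The remaining expression vanishes by relation (2) of Lemma~\ref{lem:psi and boundary rep for H^2 M0,n} with $i=t$, $j=t'$. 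The only point to double-check is that the index sets match: the sum over $B$ with $t\in B$, $t'\notin B$ should correspond to the sum in that relation, counted once via $\delta_{0,B}=\delta_{0,B^c}$. Hence $\deltal\Gamma=0$.

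\textbf{Split term at a nonspecial vertex.} Here $\deltasp$ splits a genus zero vertex into two genus zero vertices joined by a new edge. This creates no loops and leaves the decoration at $v_0$ unchanged. After applying $\pi_{bl}$, only bridgeless terms survive, so these terms lie in $\Delta_{g,n}^{\mathrm{irr}}$.

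\textbf{Split term at the special vertex.} At $v_0$, the genus $1$ splits as $1+0$ or $0+1$, i.e.\ into a genus one vertex and a genus zero vertex joined by a new non-loop edge. The decoration is pulled back along the gluing map $\theta$ of \eqref{eq: theta gluing map}. By \eqref{eq: theta^* for k_1, deltairr, psi}, $\theta^*\delta_{irr}=\delta_{irr}$, interpreted on the factor carrying the decoration:
\begin{itemize}
\item when the decoration sits on the genus one piece, it remains $\delta_{irr}$;
\item when it sits on the genus zero piece, it becomes $\delta_{irr}=0$ in $H^2(\Mb_{0,m})$, so that term vanishes.
\end{itemize}
Thus every surviving term again has a genus one special vertex decorated by $\delta_{irr}$, genus zero elsewhere, and no loops. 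Together with the bridgeless projection, this shows $\delta\Gamma=\deltasp\Gamma\in\Delta_{g,n}^{\mathrm{irr}}$.

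\textbf{Expected obstacle.} I expect the main difficulty to be the bookkeeping in the $\deltal$ computation. One must match the index set in \eqref{eq: pullback for clutching map xi} exactly with relation (2) of Lemma~\ref{lem:psi and boundary rep for H^2 M0,n}, including the identification $\delta_{0,B}=\delta_{0,B^c}$ and the unstable cases $|B|<2$. One must also confirm that the $\mathbb S_2$-symmetry swapping $t$ and $t'$ introduces no sign that could spoil the cancellation.
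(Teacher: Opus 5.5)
Your proposal is correct and follows the paper's proof essentially step for step. In both, $\deltal$ vanishes at the genus zero vertices, and it vanishes at the special vertex because $\xi^*(\delta_{irr})=-\psi_t-\psi_{t'}+\sum_{t\in A,\,t'\in A^c}\delta_{0,A}=0$ by Lemma~\ref{lem:psi and boundary rep for H^2 M0,n}; meanwhile $\deltasp$ at the special vertex retains only the term with $\delta_{irr}$ on the genus one vertex, since $\delta_{irr}=0$ in genus zero. The sign and index-set worries you flag are harmless: the condition $t\in A$, $t'\in A^c$ picks each boundary divisor separating $t$ from $t'$ exactly once, and the pulled-back class already vanishes in $H^2(\Mb_{0,S\cup\{t,t'\}})$ before any coinvariants are taken.
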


\begin{proof}
Let $\Gamma\in\Delta_{g,n}^{\mathrm{irr}}$ be a generator. At a nonspecial vertex,
$\deltal$ vanishes because the vertex has genus zero, while every
nonzero term of $\deltasp$ again lies in
$\Delta_{g,n}^{\mathrm{irr}}$.

Now consider $\deltasp$ at the special vertex. Splitting a genus one
vertex distributes its genus as $0+1$. Since
$\delta_{irr}$ vanishes on every genus zero moduli space,
only the term in which the genus one vertex retains the
$\delta_{irr}$-decoration survives. Graphically,
$$
\graphdeltairrone
\ \overset{\deltasp}{\longmapsto}\
\sum \graphsplitonezerodeltairr
\ \in\Delta_{g,n}^{\mathrm{irr}},
$$
where the sum ranges over all distributions of the incident half-edges
for which the resulting graph is stable and bridgeless.

It remains to compute $\deltal$ at the special vertex. Specializing
\eqref{eq: pullback for clutching map xi} to
$$
\xi\colon
\overline{\mathcal M}_{0,S\cup\{t,t'\}}
\longrightarrow
\overline{\mathcal M}_{1,S},
$$
we obtain
$$
\xi^*(\delta_{irr})
=
-\psi_t-\psi_{t'}
+
\sum_{\substack{t\in A\\t'\in A^c}}\delta_{0,A}.
$$
By Lemma \ref{lem:psi and boundary rep for H^2 M0,n},
$$
\psi_t+\psi_{t'}
=
\sum_{\substack{t\in A\\t'\in A^c}}\delta_{0,A}.
$$
Therefore $(\deltasp+\deltal)(\Gamma)=\deltasp(\Gamma)\in \Delta^{\mathrm{irr}}_{g,n}$.
\end{proof}

For the corresponding graph complexes in which bridges are allowed, \cite[Theorem~3.3]{PayneWillwacherWeight2} proves that the
quotient map
$$
\mathrm{GK}_{g,n}^2
\longrightarrow
\overline{\mathrm{GK}}_{g,n}^2
$$
is a quasi-isomorphism for every stable pair $(g,n)\neq(1,1)$. These complexes compute $\mathrm{gr}_2^W(H^\bullet_c(\M_{g,n}))$. A key difference appears
for compact type. Specifically, the positive-genus contribution
$\Delta_{g,n}^{\mathrm{irr}}$ survives and must be retained alongside
the reduced complex. This is the content of Theorem~\ref{thm:intro-weight two-model}, and we now
assemble the ingredients needed for its proof.

\subsection{The weight two single-vertex complex}

Taking the weight two summand of \eqref{eq:single-vertex-complex} gives
\begin{equation*}
    0 \to H^2(\Mb_{g,n}) \to H^2(\Mb_{g-1,n+2})_{\mathbb{S}_2} \to H^2(\Mb_{g-2,n+4})_{\mathbb{S}_2 \wr \mathbb{S}_2} \to H^2(\Mb_{g-3,n+6})_{\mathbb{S}_2 \wr \mathbb{S}_3} \to \cdots.
\end{equation*}
Here the action of $\mathbb{S}_2 \wr \mathbb{S}_h$ is understood to include a sign that accounts for the action on the tensor factors $\QQ[-1]^{\otimes h}$ via the Koszul sign rule.

Graphically, the $h$-th summand is
represented by a single vertex of genus $g-h$, decorated by a class in
$H^2(\Mb_{g-h,n+2h})$, with $n$ marked legs and $h$ loops. An
ordering of the loops determines an orientation, with reordering governed
by the usual sign relation. The differential $\deltal$ adds one loop and
pulls back the vertex decoration along the corresponding clutching map.

\begin{proposition}~\cite[Proposition~4.2]{PayneWillwacherWeight2}\label{prop: coh of single vertex complex weight 2}
For $g\geq2$,
$$
H(V_{g,n}^2)=0.
$$
For $g=1$,
$$
H(V_{1,n}^2)
\cong
\QQ\delta_{irr}
\oplus
\frac{
H^2(\Mb_{0,n+2})_{\mathbb S_2}
}{
\xi^*H^2(\Mb_{1,n})
}.
$$
For $g=0$, the differential on $V_{0,n}^2$ vanishes.
\end{proposition}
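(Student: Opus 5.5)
We plan to compute the cohomology of $V_{g,n}^2$ separately in the three regimes $g=0$, $g=1$, and $g\geq2$. In each regime we use the explicit presentation of $H^2(\Mb_{h,m})$ recalled above and the pullback formulas \eqref{eq: pullback for clutching map xi} for the clutching map $\xi$.

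The case $g=0$ is immediate. The only summand has $h=0$, and since $\deltal$ creates a loop and lowers the genus, it lands in a summand indexed by negative genus, which is zero. Hence the differential vanishes.

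For $g=1$ the complex has only two terms,
$$
H^2(\Mb_{1,n})\xrightarrow{\ \xi^*\ } H^2(\Mb_{0,n+2})_{\mathbb S_2}.
$$
The summands with $h\geq2$ would require genus $1-h<0$, so they are absent. The cokernel of $\xi^*$ gives the second summand of the claimed formula, and it remains to show that $\ker\xi^*=\QQ\delta_{irr}$.

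The computation in the proof of Lemma \ref{lem:Delta_irr is a dg subspace} already shows $\xi^*\delta_{irr}=0$, using Lemma \ref{lem:psi and boundary rep for H^2 M0,n}. For the converse, we use the genus one relations to write $H^2(\Mb_{1,n})$ as spanned by $\delta_{irr}$ and the classes $\delta_{0,S}$ with $|S|\geq2$; the relations $\kappa_1=\psi-\delta_0$ and $12\psi_p=\delta_{irr}+12\sum_{S\ni p}\delta_{0,S}$ eliminate $\kappa_1$ and the $\psi_p$. These classes form a basis. We then need
$$
\xi^*\delta_{0,S}=\delta_{0,S}+\delta_{-1,\dots}=\delta_{0,S}\quad\text{on }\Mb_{0,n+2},
$$
where the $\delta_{0,S}$ are divisors with $t,t'$ on the same side. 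We must check that these remain linearly independent in the $\mathbb S_2$-coinvariants. For this we use Keel's basis, or pair against test curves, to verify independence of boundary divisors separating $S$ from $\{t,t'\}\cup S^c$. The key subtlety is ensuring that no relation of Lemma \ref{lem:psi and boundary rep for H^2 M0,n} involves only such divisors. Relation (2) always involves divisors separating $i$ from $j$, so choosing $i=t$, $j=t'$ should be the right tool to exclude this.

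For $g\geq2$, we follow \cite[§4]{PayneWillwacherWeight2}. We first decompose the complex by decoration type, $\kappa_1$, $\psi$, $\delta_{irr}$, and $\delta_{a,A}$, using that $\xi^*$ nearly preserves type (\eqref{eq: pullback for clutching map xi}). We then filter so that the associated graded splits into pieces that are Koszul-type complexes of the form $\bigl(\Lambda(\text{loops})\otimes\cdots\bigr)_{\mathbb S_2\wr\mathbb S_h}$.

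\begin{itemize}
\item The $\kappa_1$ and $\psi_i$ pieces each look like the weight zero single-vertex complex, which is acyclic for positive genus by the argument for \eqref{eq:weight zero-single-vertex-cohomology}.
\item The boundary pieces $\delta_{a,A}$ behave like distributing loops between two vertices. The resulting complex is a tensor product of two weight zero single-vertex complexes, one of which has positive genus since $g\geq2$, so it is acyclic.
\end{itemize}

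The main obstacle is the $\delta_{irr}$ piece, together with the $\psi_t,\psi_{t'}$ and $\delta_{b,B}$ correction terms it generates, and the low-genus relations ($g-h\leq2$) at the tail of the complex. There the filtration argument must be checked by hand, using the $g=2$ relation and the genus one relations. I would handle it by splitting off the last two terms and verifying exactness directly, using the $g=1$ computation above as input.
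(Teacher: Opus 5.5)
For $g=0$ and $g=1$ you follow the paper's argument. The paper also reduces $g=1$ to the two-term complex $H^2(\Mb_{1,n})\xrightarrow{\xi^*}H^2(\Mb_{0,n+2})_{\mathbb S_2}$, uses the basis $\{\delta_{irr},\delta_{0,A}\}$, gets $\xi^*\delta_{irr}=0$ from Lemma~\ref{lem:psi and boundary rep for H^2 M0,n}, and uses $\xi^*\delta_{0,A}=\delta_{0,A}$.

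The paper takes the independence of these $\delta_{0,A}$ from \cite[Lemma~4.4]{PayneWillwacherWeight2}, and your remark does not settle it. A single relation (2) always contains $\psi_i+\psi_j$, so the real question is whether some combination $\sum_{i<j}c_{ij}R_{ij}$ involves only divisors with $t,t'$ on the same side, where $R_{ij}=\psi_i+\psi_j-\sum_{A\text{ separating }i,j}\delta_{0,A}$. This is quickly excluded:
\begin{itemize}
\item Vanishing $\psi$-coefficients means $c$ has zero row sums.
\item Vanishing coefficients of the splits $\{t\}\cup B\mid\{t'\}\cup([n]\setminus B)$ mean the cut function $f(B)=\sum_{i\in\{t\}\cup B,\ j\notin\{t\}\cup B}c_{ij}$ vanishes for every $B\subseteq[n]$.
\item Its second differences equal $-2c_{kl}$ for $k\neq l$ in $[n]$, and $f(\{k\})=-2c_{tk}$, with the symmetric statement for $t'$. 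The row sum at $t$ then gives $c_{tt'}=0$, so $c=0$.
\end{itemize}
Since these classes are fixed by $t\leftrightarrow t'$, independence passes to the coinvariants.

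For $g\geq2$ the paper gives no argument and only cites \cite[Proposition~4.2]{PayneWillwacherWeight2}. Your sketch there is not yet a proof, and it puts the difficulty in the wrong place.

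\emph{Boundary pieces.} Suppose the $h$ loops split as $\ell_1,\ell_2$ loops on the two sides of $\delta_{a,A}$, with $m$ loops straddling the separating edge. Then the two sub-vertex genera add up to $g-m$, not $g$, so your reason (``one side has positive genus since $g\geq2$'') fails when $m=g$. What does work is that every generator with $m\geq2$ vanishes in the coinvariants: swapping two straddling loops fixes $\delta_{a,A}$ and acts by $-1$ on $\QQ[-1]^{\otimes h}$. For $m\leq1$ one has $g-m\geq1$, and this is where $g\geq2$ actually enters. For $g=1$, the $m=1$ classes are exactly what spans the cokernel.

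\emph{The $\delta_{irr}$ piece.} Where the Arbarello--Cornalba generators are free, this piece is harmless. In coinvariants, $\delta_{irr}$ survives only for $h\leq1$ and $\psi_t+\psi_{t'}$ only for $h\leq2$. After filtering by $m$, the piece is therefore an exact complex $\QQ\to\QQ^2\to\QQ$.

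\emph{The tail.} The real work is the range $g-h\leq2$, where the genus $2$, $1$ and $0$ relations destroy your decomposition by decoration type. For $g=2$ this range is the whole complex. For example, when $n=0$ one must check directly that $\xi^*\colon H^2(\Mb_{2})\to H^2(\Mb_{1,2})_{\mathbb S_2}$ is an isomorphism, the $h=2$ term being zero. Your proposal only announces such a check.

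So the $g\geq2$ case is unproved in your write-up, just as the paper outsources it to the citation.
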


\begin{proof}
We recall the genus one proof, which will be used below, and refer to
\cite[Proposition~4.2]{PayneWillwacherWeight2} for the remaining cases.
For $g=1$, the single-vertex complex is the two-term complex
$$
H^2(\Mb_{1,n})
\xrightarrow{\xi^*}
H^2(\Mb_{0,n+2})_{\mathbb S_2}.
$$

By the genus one case of
\cite[Theorem~2.2]{ArbarelloCornalba1998},
$H^2(\Mb_{1,n})$ is freely generated by
$\delta_{irr}$ and the boundary classes
$\delta_{0,A}$, where
$A\subseteq\{1,\ldots,n\}$ and $|A|\geq2$.
The pullback formulas give

$$
\xi^*(\delta_{irr})
=
-\psi_t-\psi_{t'}
+
\sum_{\substack{t\in A\\t'\in A^c}}\delta_{0,A}
=
0,
$$
where the final equality follows from
Lemma~\ref{lem:psi and boundary rep for H^2 M0,n}. Moreover,
$$
\xi^*(\delta_{0,A})=\delta_{0,A}.
$$

By \cite[Lemma~4.4]{PayneWillwacherWeight2}, these latter images are
linearly independent in
$H^2(\Mb_{0,n+2})_{\mathbb S_2}$. It follows that
$$
\ker(\xi^*)=\QQ\delta_{irr},
\qquad
\operatorname{coker}(\xi^*)
=
\frac{
H^2(\Mb_{0,n+2})_{\mathbb S_2}
}{
\xi^*H^2(\Mb_{1,n})
}.
$$

These are the two cohomology groups of the two-term complex, giving the
claimed decomposition of $H(V_{1,n}^2)$.
\end{proof}

\subsection{The weight two reduction}

The proof of Theorem~\ref{thm:intro-weight two-model} uses the
filtration by the number of vertices to construct a spectral sequence
whose $E_1$-page is concentrated in two adjacent rows, identified with
$\Delta_{g,n}^{\mathrm{irr}}$ and $\GKbartwo$.
The only possible higher differential is $d_2$, which connects these
rows. The following two lemmas provide the ingredients needed to
eliminate this obstruction.

\begin{definition}
Let $G\in\GKtwo$ have special vertex $w$. Contract every loop of $G$, increasing the genus of its incident vertex by one for each loop contracted. Let $\Gamma$ be the resulting loopless graph. We call the pair 
$
(\Gamma,w)
$
the \emph{pointed loopless core} of $G$.
\end{definition}

By construction, $\deltal$ preserves the pointed loopless core. Following the loopless-core decomposition in the proof of
\cite[Theorem~3.3]{PayneWillwacherWeight2}, we refine each core summand by
retaining the position of the special vertex. For a pointed loopless core
$(\Gamma,w)$, let
$$
C_{(\Gamma,w)}
\subseteq
(\GKtwo,\deltal)
$$
be the subcomplex spanned by generators with pointed loopless core
isomorphic to $(\Gamma,w)$. We then have
\begin{equation}\label{eq: pointed lc decomp}
(\GKtwo,\deltal)
=
\bigoplus_{[(\Gamma,w)]}
C_{(\Gamma,w)}.
\end{equation}

For a fixed pointed core, $\deltal$ acts independently at each vertex of $\Gamma$. Consequently,
\begin{equation}\label{eq: C=VtimesV}
C_{(\Gamma,w)}
\cong
\bigg(
V^2_{g(w),n(w)}
\otimes
\bigotimes_{\substack{v\in V(\Gamma)\\v\neq w}}
V^0_{g(v),n(v)}
\otimes
\QQ[-1]^{\otimes |E(\Gamma)|}
\bigg)_{\operatorname{Aut}(\Gamma,w)},
\end{equation}
where $\mathrm{Aut}(\Gamma, w)$ denotes the subgroup of automorphisms of $\Gamma$ that fixes $w$.

\begin{lemma}\label{lem:loopless-genus one-kernel}
Let $L_{g,n}\subset\GKtwo$ be the graded subspace spanned by loopless
graphs having exactly one genus one vertex, with all remaining vertices of
genus zero. Then
$$
\ker\!\Big(
L_{g,n}
\xrightarrow{\ \deltal\ }
\GKtwo
\Big)
=
\Delta_{g,n}^{\mathrm{irr}}.
$$
\end{lemma}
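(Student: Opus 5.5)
We would use the pointed loopless core decomposition \eqref{eq: pointed lc decomp} together with the tensor description \eqref{eq: C=VtimesV}. A loopless graph is its own loopless core, so $L_{g,n}$ splits as a direct sum over pointed loopless cores $(\Gamma,w)$, where $\Gamma$ is loopless with exactly one genus one vertex $u$ and all other vertices of genus zero. Since $\deltal$ preserves the pointed core, the kernel of $\deltal$ on $L_{g,n}$ is the direct sum of the kernels on the pieces $L_{g,n}\cap C_{(\Gamma,w)}$. In each piece the generators are exactly those with no loops at any vertex, which is the ``$h=0$'' summand of every local single-vertex factor. So the problem reduces to computing, inside each tensor product, the kernel of $\deltal$ restricted to the bottom local summand at every vertex, and then taking $\operatorname{Aut}(\Gamma,w)$-coinvariants.

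The first step is the local kernels at the $h=0$ level. At a genus zero vertex, $\deltal$ vanishes identically, in both the $V^0$ and the $V^2$ factor. At the genus one vertex $u$ there are two cases.

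\textbf{Case $u\neq w$.} The factor at $u$ is $V^0_{1,n(u)}$. The map from the $h=0$ to the $h=1$ summand is the isomorphism $\QQ\to\QQ[-1]$ computed before \eqref{eq:weight zero-single-vertex-cohomology}, so it is injective.

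\textbf{Case $u=w$.} The factor is $V^2_{1,n(w)}$ and the map is $\xi^*\colon H^2(\Mb_{1,n(w)})\to H^2(\Mb_{0,n(w)+2})_{\mathbb S_2}$. The proof of Proposition~\ref{prop: coh of single vertex complex weight 2} shows that its kernel is exactly $\QQ\delta_{irr}$.

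The second step assembles these. Because $\deltal$ acts on the tensor product as a sum of the local differentials (with Koszul signs), and the terms from different vertices land in different summands (a loop at different vertices), an element of the bottom summand is killed by $\deltal$ exactly when it is killed by each local term. For a tensor product of a fixed vector with the local map, the kernel is the tensor product of the local kernels with the remaining factors. Hence the kernel is zero when $u\neq w$. When $u=w$, it is $\QQ\delta_{irr}\otimes(\text{genus zero factors})\otimes\QQ[-1]^{\otimes|E(\Gamma)|}$.

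Finally, passing to $\operatorname{Aut}(\Gamma,w)$-coinvariants commutes with kernels over $\QQ$, since coinvariants are identified with invariants and invariants are exact for a finite group. Summing over cores gives precisely the span of loopless bridgeless graphs whose special vertex has genus one and carries $\delta_{irr}$, with all other vertices of genus zero. This is $\Delta^{\mathrm{irr}}_{g,n}$. The reverse inclusion $\Delta^{\mathrm{irr}}_{g,n}\subseteq\ker\deltal$ is also visible directly from the computation in Lemma~\ref{lem:Delta_irr is a dg subspace}.

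The main obstacle is the second step. One must make sure that the separation of $\deltal$ into independent local contributions is rigorous after taking coinvariants, with the signs from edge orderings, and that loops created at distinct vertices give linearly independent generators even modulo automorphisms. I would handle this by working first with the non-coinvariant tensor product, where the argument is plain linear algebra, and only then applying the exact invariants functor.
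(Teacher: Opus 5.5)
Your proposal is correct and is essentially the paper's proof. Like the paper, you decompose $L_{g,n}$ by pointed loopless core, note that only the genus one vertex $u$ contributes to $\deltal$, and split into two cases: $1\mapsto 1$ is injective when $u\neq w$, and $\ker\xi^*=\QQ\delta_{irr}$ when $u=w$. You then pass to $\operatorname{Aut}(\Gamma,w)$-coinvariants by exactness over $\QQ$, and get the reverse containment from the earlier computation for $\Delta_{g,n}^{\mathrm{irr}}$.

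The step you flag as the main obstacle is harmless. Every vertex other than $u$ has genus zero and so contributes nothing to $\deltal$, which leaves a single local map to analyze.
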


\begin{proof}
Lemma~\ref{lem:Delta_irr is a dg subspace} shows that $\deltal$
vanishes on $\Delta_{g,n}^{\mathrm{irr}}$, giving one containment. Conversely, let $z\in L_{g,n}$ satisfy
$\deltal z=0$. Decompose $z$ according to its pointed loopless core:
$$
z
=
\sum_{[(\Gamma,w)]}
z_{(\Gamma,w)},
\qquad
z_{(\Gamma,w)}
\in
L_{g,n}\cap C_{(\Gamma,w)}.
$$
Since $\deltal$ preserves the pointed loopless core, each $z_{(\Gamma, w)}$ satisfies $\deltal(z_{(\Gamma,w)})=0$. We show that each of these components belongs to $\Delta_{g,n}^{\mathrm{irr}}$.

Fix $(\Gamma,w)$, and let $u$ be its unique genus one vertex.
All other vertices have genus zero, so only $u$ contributes
to $\deltal$. We may compute the kernel on its decoration,
since tensoring with the remaining vertex decorations and then taking finite-group
coinvariants over $\QQ$ preserves kernels.

If $u\neq w$, the vertex $u$ is decorated by $1\in H^0$,
and the local pullback sends $1$ to $1$.
Thus $\deltal$ is
injective on the subspace of loopless generators with this
pointed core. It follows that $z_{(\Gamma,w)}=0$.

If $u=w$, the local map on the decoration is
$$
\xi^*\colon
H^2(\Mb_{1,n(w)})
\longrightarrow
H^2(\Mb_{0,n(w)+2})_{\mathbb S_2}.
$$
By the genus one calculation in
Proposition~\ref{prop: coh of single vertex complex weight 2},
its kernel is $\QQ\delta_{irr}$. Hence $z_{(\Gamma,w)}$ is represented by graphs whose special
vertex is decorated by $\delta_{irr}$, and therefore
belongs to $\Delta_{g,n}^{\mathrm{irr}}$.

Thus every component of $z$ belongs to
$\Delta_{g,n}^{\mathrm{irr}}$, so $z$ does as well.
This proves the reverse containment.
\end{proof}

The differential $\deltasp$ preserves both
$L_{g,n}$ and $\Delta_{g,n}^{\mathrm{irr}}$, which we henceforth regard as complexes with $\deltasp$. As always, only splittings with bridgeless output are retained.

\begin{lemma}\label{lem:corrected-rho}
The inclusion
$
\iota\colon
(\Delta_{g,n}^{\mathrm{irr}},\deltasp)
\hookrightarrow
(L_{g,n},\deltasp)
$
admits a retraction as a map of complexes. 
\end{lemma}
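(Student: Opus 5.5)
Rather than averaging over a combinatorial projection, I will build the retraction from a single linear functional on the special decoration, compatible with pullback under $\theta^*$. The key input is the genus one presentation of $H^2(\Mb_{1,S})$ recalled in the proof of Proposition~\ref{prop: coh of single vertex complex weight 2}: it is freely generated by $\delta_{irr}$ and the classes $\delta_{0,A}$ with $|A|\geq 2$. I will define a linear functional $c_S\colon H^2(\Mb_{1,S})\to\QQ$ taking the coefficient of $\delta_{irr}$ in this basis, and set
$$
r(\Gamma,\alpha) := c_{S_u}(\alpha)\,(\Gamma,\delta_{irr})
$$
on generators of $L_{g,n}$. Here $u$ is the genus one vertex, and we define $r$ to be zero when $u$ is not the special vertex. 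Clearly $r\circ\iota=\mathrm{id}$. Because $c_S$ is defined invariantly, i.e.\ equivariantly for relabelings of $S$, $r$ is compatible with $\operatorname{Aut}$ coinvariants and with edge orderings.

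Next I will check $r\deltasp=\deltasp r$ by comparing local contributions vertex by vertex.

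\textbf{Splits at genus zero vertices.} These commute with $r$ trivially, since they do not touch the $u$-decoration and $\deltasp$ acts the same way on $\Delta^{\mathrm{irr}}_{g,n}$.

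\textbf{$u$ special, decoration $\alpha$.} Splitting $u$ produces a genus one vertex and a genus zero vertex.
\begin{itemize}
\item When the genus one vertex retains the decoration, the new decoration is $\theta^*\alpha$ for $\theta\colon\Mb_{1,A\cup\{q\}}\to\Mb_{1,S}$. I need $c(\theta^*\alpha)=c(\alpha)$. This follows from \eqref{eq: theta^* for k_1, deltairr, psi}–\eqref{eq: theta^* for boundary A neq S}: $\theta^*\delta_{irr}=\delta_{irr}$, and $\theta^*\delta_{0,B}$ is a combination of $\delta_{0,\cdot}$ and $-\psi_q$ terms. The $\psi_q$ terms must be rewritten in the basis via the genus one relation $12\psi_q=\delta_{irr}+12\sum\delta_{0,S'}$. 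This produces a $\delta_{irr}$ contribution, which is the main obstacle I expect.
\item When the genus zero vertex carries the decoration, those terms have $u$ nonspecial and are killed by $r$, while $\deltasp r$ produces no such terms. So these must also be accounted for consistently.
\end{itemize}

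\textbf{$u$ nonspecial.} Here $r$ kills the generator, but $\deltasp$ of it is also nonspecial-at-$u$ except for one case: splitting $u$ can never transfer the $H^2$ class, which lives elsewhere. So both sides vanish.

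The main difficulty is the $\psi_q$ correction. To handle it, I will instead choose $c_S$ more cleverly: as the coefficient functional with respect to a modified basis in which $\delta_{irr}$ is replaced by $12\psi_p$-type invariants. Equivalently, I will take $c_S(\alpha):=\int$-type pairing $\langle\alpha,[\text{curve}]\rangle/\langle\delta_{irr},[\text{curve}]\rangle$ against a test curve class $\beta_S\in H_2(\Mb_{1,S})$ that is disjoint from all $\delta_{0,A}$. A natural candidate is the pushforward of a fixed $\Mb_{1,1}$ family attached at a generic point via $\theta$, i.e.\ varying $j$-invariant with markings on a rational tail. Then $\theta_*\beta_{A\cup q}=\beta_S$ up to positive scaling, which gives $c(\theta^*\alpha)=c(\alpha)$ automatically by the projection formula. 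Since $\beta_S$ pairs to zero with every $\delta_{0,A}$ (indeed the family meets the separating divisor only along the fixed tail node, which I will normalize or absorb) and nontrivially with $\delta_{irr}$, this functional restricts to $1$ on $\delta_{irr}$. The invariance under $\theta^*$ then reduces to the projection formula, and the argument closes. If pairing with $\delta_{0,A}$ containing the tail cannot be made zero, I will fall back to explicitly verifying the coefficient identity using \eqref{eq: theta^* boundary sc A=S} case by case.
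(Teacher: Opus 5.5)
\paragraph{The gap.} You are right that your first functional, the $\delta_{irr}$-coefficient in the basis $\{\delta_{irr},\delta_{0,A}\}$, is not a chain map. For an admissible split ($|U|\ge 2$, $T\neq\emptyset$) one has $\theta_{U,T}^*\delta_{0,U}=-\psi_q$. The relation $12\psi_q=\delta_{irr}+12\sum_{q\in A}\delta_{0,A}$ gives this a $\delta_{irr}$-coefficient of $-\tfrac1{12}$, while $\delta_{0,U}$ has coefficient $0$.

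The problem is that both of your repairs, as you specify them, give back exactly this functional.
\begin{itemize}
\item Replacing $\delta_{irr}$ by $12\psi_p$ is a unitriangular change of basis that alters only the $\delta_{0,A}$-coordinates. The new ``coefficient'' is therefore the old one.
\item A class $\beta_S\in H_2(\Mb_{1,S})$ that pairs to zero with every $\delta_{0,A}$ is, up to scale, the dual-basis vector of $\delta_{irr}$. So $\langle\alpha,\beta_S\rangle/\langle\delta_{irr},\beta_S\rangle$ is again the naive coefficient.
\end{itemize}
Consequently no such $\beta_S$ can satisfy $\theta_*\beta_{T\cup\{q\}}\propto\beta_S$; otherwise the projection formula would make the naive functional compatible. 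Your plan to ``normalize or absorb'' the tail contribution would destroy the very compatibility you need. The case-by-case fallback would only rediscover the failure.

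\paragraph{The fix.} The missing idea is that the nonzero pairing with $\delta_{0,S}$ is what makes the construction work, so keep your natural candidate exactly as it is. Let $j_S\colon\Mb_{1,\{\star\}}\to\Mb_{1,S}$ attach a fixed $(S\sqcup\{\star\})$-pointed rational tail; this is the gluing map $\theta$ with $a=1$, $A=\emptyset$. Put $\beta_S=(j_S)_*[\Mb_{1,\{\star\}}]$. This family lies inside $\Delta_{0,S}$, not merely meeting it. The pullback formulas give:
\begin{itemize}
\item $j_S^*\delta_{irr}=\delta_{irr}$ and $j_S^*\delta_{0,S}=-\psi_\star$;
\item $j_S^*\delta_{0,A}=0$ for $A\subsetneq S$;
\item $j_S^*\psi_i=0$ for $i\in S$.
\end{itemize}
Since $12\psi_\star=\delta_{irr}$ on $\Mb_{1,\{\star\}}$, your normalized functional sends $\delta_{irr}\mapsto 1$, $\delta_{0,S}\mapsto -\tfrac1{12}$, and $\delta_{0,A}\mapsto 0$ for $A\subsetneq S$. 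This is precisely the paper's $\lambda_S$. It kills every $\psi$-class because the markings ride on a constant tail, which is the geometric content of the paper's identity $\lambda_R(\psi_q)=0$.

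For compatibility, note that $\theta_{U,T}\circ j_{T\cup\{q\}}$ is $j_S$ with the smooth tail replaced by a nodal tail in $\Mb_{0,S\sqcup\{\star\}}$. That space is connected, so $\theta_*\beta_{T\cup\{q\}}=\beta_S$. The projection formula then yields $\lambda_{T\cup\{q\}}\circ\theta_{U,T}^*=\lambda_S$ with no case analysis.

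\paragraph{Comparison with the paper.} With this correction, your route is a genuine alternative. The paper posits the constant $-\tfrac1{12}$ and verifies compatibility against the genus one relation and the pullback table. Your test-curve argument explains where the constant comes from. As written, however, the property you insist on for $\beta_S$ is false for your candidate and fatal if enforced.
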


\begin{proof}

For every nonempty finite set $S$, the classes 
$$
\deltairr,
\qquad
\delta_{0,A}
\quad
(A\subseteq S,\ |A|\geq2)
$$
 form a basis of $H^2(\Mb_{1,S})$; see \cite[Proof of Proposition~4.2, case $g=1$]{PayneWillwacherWeight2}. Define
$
\lambda_S\colon H^2(\Mb_{1,S})\longrightarrow\QQ
$
by
$$
\lambda_S(\deltairr)=1,
\qquad
\lambda_S(\delta_{0,A})=
\begin{cases}
-\dfrac{1}{12},&A=S,\\
~~~~0,&A\subsetneq S.
\end{cases}
$$
When $|S|=1$, the second formula is vacuous.

Let $u$ and $w$ denote the unique genus one vertex and the
special vertex of a generator of $L_{g,n}$, respectively. If
$u\neq w$, define $\rho: L_{g,n} \to \Delta_{g,n}^{\mathrm{irr}}$ to be zero on this generator. If
$u=w$, let $S$ be the set of half-edges incident to
$w$, and let
$\alpha\in H^2(\Mb_{1,S})$ be its decoration. Define
$$
\rho(\Gamma,w,\alpha)
=
\lambda_S(\alpha)
(\Gamma,w,\deltairr).
$$

Since the functionals $\lambda_S$ are invariant under bijections of $S$, this defines a degree-zero map on graph coinvariants.
By construction,
$
\rho\circ\iota
=
\operatorname{id}_{\Delta_{g,n}^{\mathrm{irr}}}
$. 

It remains to prove
that $\rho$ commutes with $\deltasp$. This is immediate if the genus one vertex is nonspecial or if the splitting occurs at a nonspecial vertex.
We therefore consider a 
splitting of the special genus one vertex.
Write
$
S=U\sqcup T,
$
where the new genus zero and genus one vertices receive the half-edges in $U$ and $T$, respectively, and let $p$ and $q$
be the corresponding half-edges of the new edge. Stability gives that $|U| \geq 2$, while bridgelessness forces $T \neq \emptyset$. 

In the degree two K\"unneth decomposition of the clutching pullback, $\rho$ kills the component on the genus zero factor. After fixing that factor, the component on the genus one factor is $\theta^*_{U,T}(\alpha)$, where
$$
\theta_{U,T}\colon
\Mb_{1,T\cup\{q\}}
\longrightarrow
\Mb_{1,S}
$$
is the gluing map obtained by fixing a curve in $\Mb_{0,U \cup \{p\} }$ as in \eqref{eq: theta gluing map}. It is precisely the component of the K\"unneth
decomposition associated to the genus one vertex. Thus it suffices to prove
\begin{equation}\label{eq:lambda-compatible-with-splitting}
\lambda_{T\cup\{q\}}
\circ \theta_{U,T}^*
=
\lambda_S.
\end{equation}

Set $R=T\cup\{q\}$. Since $T\neq \emptyset$, the genus one relation gives
$$
12\lambda_R(\psi_q)
=
\lambda_R(\deltairr)
+
12\lambda_R(\delta_{0,R})=1-1=0.
$$
The pullback formulas \eqref{eq: theta^* for k_1, deltairr, psi} and \eqref{eq: theta^* for boundary A neq S} now give
$$
\lambda_R\bigl(\theta_{U,T}^*(\deltairr)\bigr)=1
$$
and
$$
\lambda_R\bigl(\theta_{U,T}^*(\delta_{0,B})\bigr)
=
\begin{cases}
-\dfrac{1}{12},&B=S,\\
~~~~0,&B\subsetneq S.
\end{cases}
$$
Indeed, the exceptional pullback for $B=U$ is $-\psi_q$, the
pullback for $B=S$ is the full boundary class $\delta_{0,R}$, and
every other nonzero pullback is a proper boundary class. This proves
\eqref{eq:lambda-compatible-with-splitting}.

Finally, $\rho$ changes neither the underlying graph nor its
structural-edge ordering, so corresponding splitting terms have the
same signs. Since it also preserves whether the output is bridgeless,
we conclude that
\begin{equation}\label{eq:rho com w/ dsp}
\rho\circ\deltasp
=
\deltasp\circ\rho.
\end{equation}
Thus $\rho$ is the required retraction of complexes.

\end{proof}

Theorem~\ref{thm:intro-weight two-model} follows by combining the
following theorem with
Proposition~\ref{prop: Delta_irr is weight zero}.

\begin{theorem}\label{thm: Split GK into delta irr and bar GK}
For every $g,n\geq0$ satisfying $2g-2+n>0$, there is a noncanonical
zigzag of quasi-isomorphisms of dg vector spaces
$$
\GKtwo
\simeq
\Delta_{g,n}^{\mathrm{irr}}
\oplus
\GKbartwo.
$$
\end{theorem}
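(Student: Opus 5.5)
Filter $\GKtwo$ by the number of vertices, $\F^p$ spanned by graphs with at least $p$ vertices. As in the proof of Theorem~\ref{thm:intro-weight zero-model}, the associated graded is $(\GKtwo,\deltal)$. It splits by \eqref{eq: pointed lc decomp} into the summands $C_{(\Gamma,w)}$, each described by \eqref{eq: C=VtimesV}.

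By K\"unneth and exactness of coinvariants, $H(C_{(\Gamma,w)})$ is the $\operatorname{Aut}(\Gamma,w)$-coinvariants of $H(V^2_{g(w),n(w)})\otimes\bigotimes_{v\neq w}H(V^0_{g(v),n(v)})$, with edge signs. The two local calculations then decide which cores survive:
\begin{itemize}
\item by \eqref{eq:weight zero-single-vertex-cohomology}, every nonspecial vertex must have genus zero;
\item by Proposition~\ref{prop: coh of single vertex complex weight 2}, the special vertex must have genus $0$ or $1$.
\end{itemize}

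\emph{Genus one special vertex.} The local cohomology is $\QQ\delta_{irr}\oplus\operatorname{coker}\xi^*$. The $\delta_{irr}$ part, sitting in the loopless degree, gives exactly $\Delta^{\mathrm{irr}}_{g,n}$; this agrees with Lemma~\ref{lem:loopless-genus one-kernel}. The cokernel part is represented by genus zero special vertices carrying one loop.

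\emph{Genus zero special vertex.} There are no loops at nonspecial vertices, and the graphs are those of $\GKbartwo$. Together with the cokernel classes, this should reproduce $\GKbartwo$, whose relation $\delta_{0,A}=0$ for $\{s,t\}\subseteq A$ is exactly the image of $\xi^*$ on a loop. So $E_1\cong\Delta^{\mathrm{irr}}_{g,n}\oplus\GKbartwo$ as graded spaces. Since a $\delta_{irr}$ class has one fewer loop, hence one more vertex relative to a comparable $\GKbartwo$ generator of the same degree, the two pieces occupy adjacent filtration rows.

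Rather than analyzing the $d_2$ between these rows abstractly, I would build explicit maps. Let $K\subseteq\GKtwo$ be the subcomplex generated by $I^{bl,2}_{g,n}+\delta I^{bl,2}_{g,n}$, so that $\GKtwo/K=\GKbartwo$.
\begin{enumerate}
\item Compare $K$ with $\Delta^{\mathrm{irr}}_{g,n}$. The inclusion $\Delta^{\mathrm{irr}}\subseteq I$ holds, since the special vertex has positive genus. The plan is to show that $\Delta^{\mathrm{irr}}_{g,n}\hookrightarrow K$ is a quasi-isomorphism, by running the same vertex-number spectral sequence on $K$ alone: its $E_1$ should be exactly the $\delta_{irr}$ row.
\item Split the short exact sequence $0\to K\to\GKtwo\to\GKbartwo\to0$ up to homotopy. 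For this I would extend the retraction $\rho$ of Lemma~\ref{lem:corrected-rho} from $L_{g,n}$ to a chain map $\tilde\rho\colon\GKtwo\to\Delta^{\mathrm{irr}}_{g,n}$. Define $\tilde\rho$ by $\rho$ on $L_{g,n}$, and by zero on generators that have loops or higher-genus vertices, or whose genus one vertex is not special.
\item Verify $\tilde\rho\circ\delta=\delta\circ\tilde\rho$. The $\deltasp$ part is \eqref{eq:rho com w/ dsp}, together with the remark that splitting cannot create $L$-generators from outside $L$ except in a controlled way. The $\deltal$ part requires $\tilde\rho\circ\deltal=0$: loops leave $L$, and $\deltal$ of a genus two vertex lands in $L$ only through $\xi^*$, where one needs $\lambda\circ\xi^*=0$ via \eqref{eq: pullback for clutching map xi}.
\item Conclude. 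Given $\tilde\rho$, the map $(\tilde\rho,q)\colon\GKtwo\to\Delta^{\mathrm{irr}}_{g,n}\oplus\GKbartwo$ is a chain map. It induces an isomorphism on the $E_1$ pages of the vertex filtration, so it is a quasi-isomorphism by the comparison theorem \cite[Theorem~5.2.12]{WeibelHomologicalAlgebra}.
\end{enumerate}

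The main obstacle is step 3, namely checking that $\tilde\rho$ is a chain map. The delicate terms are those where $\deltasp$ splits a genus two or higher special vertex, or a genus one vertex with loops, into a genus one special vertex with no loops, giving an $L$-term. The plan is to handle these by the genus two relation and the pullback formulas \eqref{eq: theta^* boundary sc A=S}, \eqref{eq: theta^* for boundary A neq S}. If they fail to cancel, I would instead correct $\tilde\rho$ by terms on graphs with one loop at a genus zero special vertex, which amounts to killing $d_2$ by hand; this correction is the source of the noncanonicity.
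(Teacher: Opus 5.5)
The gap is step~3: you do not show that $\tilde\rho$ is a chain map, and the difficulties you anticipate there are not the real ones.

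Two of them are vacuous. First, $\deltal$ always creates a loop, so $\tilde\rho\circ\deltal=0$ holds trivially and no condition like $\lambda\circ\xi^*=0$ arises. Second, $\deltasp$ preserves $\sum_v g(v)$, and $L_{g,n}$ is exactly the loopless part of the locus $\sum_v g(v)=1$. So a genus two vertex can never split into an $L_{g,n}$-term, and no genus two relation enters.

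The case that actually needs an argument is a generator $\gamma$ with $\sum_v g(v)=1$ and at least one loop. Then $\tilde\rho(\gamma)=0$. But suppose all loops of $\gamma$ sit at one vertex, and you split that vertex sending the two half-edges of every loop to different sides. This produces loopless terms in $L_{g,n}$, some of them with the special genus one vertex, so a priori $\tilde\rho(\deltasp\gamma)\neq 0=\deltasp\tilde\rho(\gamma)$.

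No relation in $H^2$ resolves this. The missing idea is combinatorial. Every such term has at least two parallel edges joining the two new vertices: the new edge and each broken loop. Hence $\tilde\rho$ sends it to a multiple of a $\delta_{irr}$-decorated graph with a multiple edge. That graph is zero in $\Delta^{\mathrm{irr}}_{g,n}$, because exchanging two parallel edges fixes every vertex, acts trivially on $\delta_{irr}$ and on the $H^0$-decorations, and is an odd permutation of the edges. Combined with \eqref{eq:rho com w/ dsp} on $L_{g,n}$, this shows $\tilde\rho$ is a chain map, with no correction terms needed.

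With this supplied, your route is complete and genuinely different from the paper's. Step~4 works as stated. $\tilde\rho$ preserves the number of vertices and kills $\deltal$-images. On $E_1$ it is the identity on the $\delta_{irr}$ classes and zero on the classes whose special vertex has genus zero (loopless, or with one loop), while $q_1$ does the reverse. Step~1 is then superfluous.

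The paper instead uses $\rho$ only on $L_{g,n}$, to show $d_2=0$:
\begin{enumerate}
\item it lifts an upper-row class $x$ to $x+y$;
\item it uses Lemma~\ref{lem:loopless-genus one-kernel} to get $\deltasp y\in\Delta^{\mathrm{irr}}_{g,n}$;
\item it corrects to the cocycle $x+y-\rho(y)$.
\end{enumerate}
It then splits the filtration on cohomology, which yields only a noncanonical zigzag. Your global retraction gives a single explicit quasi-isomorphism $(\tilde\rho,q)\colon\GKtwo\to\Delta^{\mathrm{irr}}_{g,n}\oplus\GKbartwo$, whose only choice is the functionals $\lambda_S$. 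It also does not need Lemma~\ref{lem:loopless-genus one-kernel}.
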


\begin{proof}
Let
$
q\colon\GKtwo\longrightarrow\GKbartwo
$
be the quotient map. Following the proof of
\cite[Theorem~3.3]{PayneWillwacherWeight2}, filter both complexes by
the number of vertices. For $p\geq0$, let
$\mathcal F^p\GKtwo$ and $\mathcal F^p\GKbartwo$ be the subcomplexes
spanned by graphs with at least $p$ vertices. These define decreasing
filtrations, and $q$ is a filtered map. 

The differential $\deltal$ preserves the number of vertices, whereas
$\deltasp$ increases it by one. Hence
$$
\operatorname{gr}_{\mathcal F}\GKtwo
\cong
(\GKtwo,\deltal),
\qquad
\operatorname{gr}_{\mathcal F}\GKbartwo
\cong
(\GKbartwo,0),
$$
and the differential $d_1$ on each $E_1$-page is induced by
$\deltasp$.

We first identify the $E_1$-page of $\GKtwo$. Using the pointed loopless-core decomposition
\eqref{eq: pointed lc decomp}, the single-vertex decomposition
\eqref{eq: C=VtimesV}, the K\"unneth formula, and exactness
of finite-group coinvariants over $\QQ$, we obtain
$$
H\bigl(C_{(\Gamma,w)},\deltal\bigr)
\cong
\bigg(
H(V^2_{g(w),n(w)})
\otimes
\bigotimes_{\substack{v\in V(\Gamma)\\v\neq w}}
H(V^0_{g(v),n(v)})
\otimes
\QQ[-1]^{\otimes|E(\Gamma)|}
\bigg)_{\operatorname{Aut}(\Gamma,w)}.
$$
Equation~\eqref{eq:weight zero-single-vertex-cohomology} eliminates
every core having a nonspecial vertex of positive genus, while
Proposition~\ref{prop: coh of single vertex complex weight 2}
eliminates every core whose special vertex has genus at least two.
The remaining classes have representatives of the following three types:
\begin{enumerate}
\item loopless graphs whose vertices all have genus zero, with one
special vertex carrying an $H^2$-decoration;

\item loopless graphs whose special vertex has genus one and is
decorated by $\delta_{irr}$, with all remaining vertices
of genus zero;

\item graphs whose vertices all have genus zero with exactly one loop incident to the special
vertex, whose decoration represents a class in
$
H^2(\Mb_{0,n(w)+2})_{\mathbb S_2}
\big/
\xi^*H^2(\Mb_{1,n(w)}).
$
\end{enumerate}

The quotient map induces
$$
q_1\colon H(\GKtwo,\deltal)\longrightarrow\GKbartwo
$$
on the $E_1$-pages.  By the definition of $\GKbartwo$ and
Proposition~\ref{prop: coh of single vertex complex weight 2},
$q_1$ identifies the classes of types~\textup{(1)} and~\textup{(3)}
with $\GKbartwo$. It kills the classes of type~\textup{(2)}, since
these contain a positive-genus vertex. Such classes are precisely the
generators of $\Delta_{g,n}^{\mathrm{irr}}$. Therefore $q_1$ is
surjective and
$$
\ker(q_1)
=
\Delta_{g,n}^{\mathrm{irr}}.
$$

For a graph with $p$ vertices and $e$ structural edges, the total
cohomological degree is $e+2$, so its row in the spectral sequence is
\begin{equation}\label{eq:row formula}
s=e+2-p=g+1-\sum_v g(v).
\end{equation}
Thus type~\textup{(2)} lies in row $s=g$, while
types~\textup{(1)} and~\textup{(3)} lie in row $s=g+1$.
Since $\deltasp$ preserves the sum of the vertex genera, the
identifications above give
$$
\bigl(E_1(\GKtwo),d_1\bigr)
\cong
(\Delta_{g,n}^{\mathrm{irr}},\deltasp)
\oplus
(\GKbartwo,\deltasp).
$$
The only possible higher differential is
$$
d_2\colon E_2^{p,g+1}\longrightarrow E_2^{p+2,g}.
$$
We show that it vanishes by lifting each class in the upper row to
a cocycle for the full differential.

Let $[x]\in E_2^{p,g+1}$. Choose a $\deltal$-cycle representative
$x$ consisting of graphs with $p$ vertices, all of genus zero,
and at most one structural loop. Since $[x]$ is $d_1$-closed,
there is a chain $y$ with $p+1$ vertices in row $s=g$ such that
\begin{equation}\label{eq:d2-correction-equation}
\deltasp x=-\deltal y.
\end{equation}
Decompose $y$ by its number of structural loops:
$$
y=\sum_{\ell\geq0}y^{(\ell)}.
$$
Vertex splitting does not increase this number, while loop addition
increases it by one. Since $\deltasp x$ has at most one loop,
comparing loop counts in \eqref{eq:d2-correction-equation} gives
$$
\deltal y^{(\ell)}=0\qquad(\ell\geq1).
$$
We may therefore replace $y$ by $y^{(0)}$ without changing
\eqref{eq:d2-correction-equation}, and assume that $y$ is loopless.

By \eqref{eq:row formula}, every graph in $y$ has exactly one
genus one vertex and all remaining vertices of genus zero.
Thus $y\in L_{g,n}$, and $\deltasp y\in L_{g,n}$ as well.
Moreover,
$$
\deltal\deltasp y
=-\deltasp\deltal y
=\deltasp^2x
=0.
$$
Lemma~\ref{lem:loopless-genus one-kernel} gives
$\deltasp y\in\Delta_{g,n}^{\mathrm{irr}}$.

Let $\rho$ be the retraction from
Lemma~\ref{lem:corrected-rho}, and set
$$
y'=y-\rho(y).
$$
Since $\deltal$ vanishes on $\Delta_{g,n}^{\mathrm{irr}}$
and $\rho$ commutes with $\deltasp$ \eqref{eq:rho com w/ dsp}, we have
$$
\deltal y'=-\deltasp x,
\qquad
\deltasp y'
=\deltasp y-\rho(\deltasp y)=0.
$$
Consequently,
$$
(\deltal+\deltasp)(x+y')=0.
$$
The correction $y'$ has $p+1$ vertices, so this cocycle lifts the
original class $[x]$. In particular, $d_2[x]=0$.

All higher differentials vanish for row-degree reasons, so
$E_2=E_\infty$. Since the filtration is finite, the associated graded
of $H^\bullet(\GKtwo)$ is
$$
H^\bullet(\Delta_{g,n}^{\mathrm{irr}})
\oplus H^\bullet(\GKbartwo).
$$
Splitting this filtration degree by degree over $\QQ$ gives a
noncanonical isomorphism
$$
H^\bullet(\GKtwo)
\cong H^\bullet(\Delta_{g,n}^{\mathrm{irr}})
\oplus H^\bullet(\GKbartwo).
$$
Finally, choosing cocycle representatives gives a quasi-isomorphism
from the cohomology of a complex, equipped with zero differential,
to the complex itself. 
\end{proof}

\subsection{A weight zero identification}

The complex $\Delta_{g,n}^{\mathrm{irr}}$ has a direct interpretation
in terms of weight zero. Replace its special genus one vertex, decorated by $\delta_{irr}$, with a genus zero vertex carrying an additional marked leg. This operation lowers the genus by one and intertwines the differentials, giving the following identification. 

\begin{proposition}\label{prop: Delta_irr is weight zero}
For $g\geq1$ and $2g+n>3$, there is a natural isomorphism of
complexes
$$
\Delta_{g,n}^{\mathrm{irr}}
\cong
\overline{\mathrm{GK}}_{g-1,n+1}^{{bl},0}[-2].
$$
\end{proposition}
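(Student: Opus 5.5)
We construct an explicit bijection on generators and then check that it intertwines the differentials, keeping track of signs and degrees. Write a generator of $\Delta_{g,n}^{\mathrm{irr}}$ as $(\Gamma,w,\delta_{irr},\omega)$. Here $\Gamma$ is a loopless bridgeless stable $n$-marked graph of genus $g$, and $w$ is the unique genus one vertex, carrying $\delta_{irr}$. Every other vertex has genus zero, and $\omega$ is an ordering of $E(\Gamma)$.

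Define $\Phi(\Gamma,w,\delta_{irr},\omega)=(\Gamma^+,\omega)$. Here $\Gamma^+$ is obtained by lowering the genus of $w$ to $0$ and attaching a new marked leg labelled $n+1$ at $w$. The result is an $(n+1)$-marked graph of genus $g-1$ with all vertices of genus zero. It has no loops, the same structural edges, and is still bridgeless, since adding a leg does not change which edges are bridges.

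\textbf{Stability of the new vertex.} We must check $n(w)+1\geq 3$, i.e. $n(w)\geq2$. The vertex $w$ has genus one, so $n(w)\geq1$. If $n(w)=1$, the unique half-edge at $w$ is either a marked leg, in which case $\Gamma$ is the single vertex and $(g,n)=(1,1)$, excluded by $2g+n>3$; or it is a structural edge, which is then a bridge, contradiction. Note that the graph consisting of a single genus-one vertex with $n\ge2$ legs maps to a single genus-zero vertex with $n+1\ge3$ legs, which is fine.

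\textbf{Inverse.} Given a generator of $\overline{\mathrm{GK}}^{bl,0}_{g-1,n+1}$, the leg $n+1$ sits at a unique vertex. Raise its genus to one, delete the leg, and decorate it by $\delta_{irr}$. The resulting vertex has at least $2$ half-edges and genus one, so it is stable.

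\textbf{Automorphisms and degrees.} Automorphisms of $(\Gamma,w)$ correspond exactly to automorphisms of $\Gamma^+$ fixing the leg, and both act on the edges in the same way. So the orientation relations match and $\Phi$ is a well-defined isomorphism of graded spaces. A generator with $e$ edges has degree $e+2$ in $\mathrm{GK}^{bl,2}$ and degree $e$ in the weight zero complex, which accounts for the shift $[-2]$.

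\textbf{Intertwining the differentials.} By Lemma~\ref{lem:Delta_irr is a dg subspace}, the differential on $\Delta^{\mathrm{irr}}_{g,n}$ is $\deltasp$. It has two kinds of terms.
\begin{enumerate}
\item Splittings at genus zero vertices. These are identical on both sides, since the presence of leg $n+1$ at $w$ affects neither stability nor bridgelessness there.
\item Splittings of $w$ into a genus zero vertex and a genus one vertex, with $\delta_{irr}$ staying on the genus one vertex. This uses $\theta^*\delta_{irr}=\delta_{irr}$; the other genus distribution dies because $\delta_{irr}$ vanishes in genus zero. Under $\Phi$, these correspond to splittings of the genus zero vertex $w$ in $\Gamma^+$ in which leg $n+1$ goes to the second vertex.
\end{enumerate}

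The main check is that the allowed distributions of half-edges agree in case (2).

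On the compact type side, write $S=U\sqcup T$ as in Lemma~\ref{lem:corrected-rho}. The conditions are $|U|\geq 2$ for stability of the genus zero vertex and $T\neq\emptyset$ for bridgelessness. The genus one vertex with $T\cup\{q\}$ is automatically stable.

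On the weight zero side, the new vertex receives $T\cup\{n+1,q\}$. It needs at least $3$ half-edges, so $|T|\geq1$, and the edge must not be a bridge.

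Next we match the bridge conditions. In the compact type picture, the new edge fails to be a bridge exactly when $U$ and $T$ are each connected to the rest of the graph other than through the new edge. Since leg $n+1$ is not structural, the same condition governs $\Gamma^+$. One must also check that on the weight zero side every splitting of $w$ has leg $n+1$ on exactly one side, with the roles of the two sides symmetric, so that nothing is double counted or missed. In particular, a side containing only leg $n+1$ plus $q$ is unstable and excluded. With these matched, the two conditions coincide.

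Finally, both sides place the new edge first in the ordering, so the signs agree. Naturality holds because $\Phi$ involves no choices.
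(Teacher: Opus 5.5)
Your proposal is correct and follows essentially the same route as the paper: the same leg-attaching bijection and inverse, the same valence-one stability argument, and the same matching of splittings at $w$, where $T=\emptyset$ is excluded by bridgelessness on the compact type side and by stability on the weight zero side, while all other stability and bridge conditions coincide. One small correction: your description of when the new edge fails to be a bridge is inaccurate when $w$ is a cut vertex (the edge is a bridge exactly when no path joins the two new vertices while avoiding it), but your argument only needs that this condition depends on the structural edges alone, and leg $n+1$ does not affect them.
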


\begin{proof}
Let $w$ be the special genus one vertex of a generator of
$\Delta_{g,n}^{\mathrm{irr}}$. Define
$$
\mu_{g,n}\colon
\Delta_{g,n}^{\mathrm{irr}}
\longrightarrow
\overline{\mathrm{GK}}_{g-1,n+1}^{{bl},0}[-2]
$$
by changing the genus of $w$ from one to zero, removing its
$\delta_{{irr}}$-decoration, and attaching a new leg
labeled $n+1$ to $w$. The underlying graph and its edge ordering are left
unchanged.

The resulting graph has genus $g-1$ and remains bridgeless and
loopless. Only stability at $w$ requires verification. If $w$ had valence one, its sole incident half-edge could not belong to a structural edge, since that edge would be a bridge. It must therefore be a marked leg, and connectedness forces the graph to be the graph with one vertex and one marked leg, i.e., $(g,n)=(1,1)$. This case is excluded by the assumption $2g+n >3$. Thus $w$ has valence at least two, and the additional leg makes its image stable as a genus zero vertex.

Conversely, start with a generator of
$\overline{\mathrm{GK}}_{g-1,n+1}^{{bl},0}$. Remove the leg
labeled $n+1$, change the genus of its incident vertex from zero to
one, declare this vertex special, and decorate it by
$\delta_{{irr}}$. These constructions are inverse to one
another. Moreover, the labeled leg $n+1$ records the special vertex, so they identify the corresponding automorphism groups and preserve the relations arising from reordering the structural edges.

Since $\deltal$ vanishes on both complexes, it remains to compare $\deltasp$.
Splittings at nonspecial
vertices agree immediately. When the special vertex is split, the surviving terms contain one genus one vertex and one genus zero vertex, with the $\delta_{irr}$-decoration remaining on the genus one vertex by
$$
\theta^*(\delta_{{irr}})
=
\delta_{{irr}}.
$$
Under
$\mu_{g,n}$, this vertex corresponds precisely to the genus zero vertex receiving the leg labeled $n+1$. If it receives no original half-edges, the new edge is a bridge on the left, while the corresponding vertex is unstable on the right. Otherwise, the stability and bridgelessness conditions agree. Thus $\mu_{g,n}$ identifies the retained splitting terms. Since the new structural edge is placed first on both sides, their signs also agree.

Finally, removing the $\delta_{{irr}}$-decoration lowers the
cohomological degree by two, accounting for the shift $[-2]$.
\end{proof}

\begin{remark}
The preceding proposition identifies the
$\Delta_{g,n}^{\mathrm{irr}}$-summand of
$\mathrm{GK}_{g,n}^{{bl},2}$ in terms of weight zero
cohomology. It is natural to ask whether the remaining reduced summand
$\overline{\mathrm{GK}}_{g,n}^{{bl},2}$ admits a similar
description.

For comparison, consider the ordinary weight two Getzler--Kapranov
complex $\mathrm{GK}_{g,n}^{2}$, in which bridges are allowed. When
$n=0$ and $g\geq2$, the cohomology of
$\mathrm{GK}_{g,0}^{2}$, equivalently that of its reduced quotient
$\overline{\mathrm{GK}}_{g,0}^{2}$, is expressed as a direct sum of
four terms constructed from weight zero compactly supported cohomology;
see \cite[Theorems~1.2 and~3.3]{PayneWillwacherWeight2}. For $n>0$,
several injections from terms built from weight zero cohomology into
$H^\bullet(\mathrm{GK}_{g,n}^{2})$ are constructed in
\cite[Theorems~1.6 and~1.7]{PayneWillwacherWeight2}. At present, we do
not have analogous descriptions or injections for
$H^\bullet(\overline{\mathrm{GK}}_{g,n}^{{bl},2})$.
\end{remark}

\section{Comparing $\mathcal M_g$ and $\mathcal A_g$ through compact type}\label{sec: Comparing Mg and Ag through compact type}

We now use $\mathcal M_g^{ct}$ as an intermediary between
$\mathcal M_g$ and $\mathcal A_g$. The open inclusion
and the extended Torelli morphism allow us to transfer cohomological information among these three spaces. We use this comparison to uncover additional structure
in the weight zero compactly supported cohomology of
$\mathcal M_g^{ct}$ and to relate structures on
$\mathcal M_g$ and $\mathcal A_g$ arising from a priori different
constructions.

\subsection{Comparison with $\mathcal M_g$}
To prove Theorem~\ref{Thm: intro M_g comparison}, we first establish
the following comparison theorem, which shows that adjoining the
separating boundary does not change compactly supported cohomology
below degree $2g$ and kills no classes in degree $2g$.
Together with the known vanishing of $H_c^k(\M_g)$ for $k<2g$,
this gives the same vanishing for $\M_g^{ct}$.
Nonvanishing in degree $2g$ then follows by Poincaré duality from
the existence of a nonzero tautological class in
$H^{4g-6}(\M_g^{ct})$; see
\cite{FaberPandharipandeRelativeMaps,GraberVakilRelativeLocalization}.

\begin{theorem}\label{thm: i: Mg to Mgct}
   For every $g \geq 2$, let $i: \M_g \hookrightarrow \M_g^{ct}$ denote the open inclusion. The natural map
    \[
    i_!: H_c^k(\M_g) \to H_c^k(\M_g^{ct})
    \]
    is an isomorphism for $k<2g$ and is injective for $k=2g$.
\end{theorem}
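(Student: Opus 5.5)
We use the long exact sequence in compactly supported cohomology for the open inclusion $i\colon\M_g\hookrightarrow\M_g^{ct}$ with closed complement $Z:=\M_g^{ct}\setminus\M_g$. Here $Z$ is the locus of compact type curves with at least one (necessarily separating) node, i.e.\ the union of the separating boundary divisors $\Delta_1,\ldots,\Delta_{\lfloor g/2\rfloor}$ intersected with $\M_g^{ct}$. The sequence
$$
\cdots\to H_c^{k-1}(Z)\to H_c^k(\M_g)\xrightarrow{i_!}H_c^k(\M_g^{ct})\to H_c^k(Z)\to\cdots
$$
shows the theorem follows once we prove $H_c^k(Z)=0$ for all $k\le 2g-1$. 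That vanishing gives surjectivity of $i_!$ for $k<2g$ and injectivity for $k\le 2g$.

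To bound $H_c^\bullet(Z)$ we stratify $Z$ by dual graph. The strata are indexed by stable trees $T$ of genus $g$ with at least one edge, and the stratum is $\M_T=\bigl(\prod_{v}\M_{g_v,n_v}\bigr)/\operatorname{Aut}(T)$, where every vertex has $g_v\ge1$ because leaves of a genus-$g$ unmarked stable tree must have positive genus. A spectral sequence (or iterated long exact sequences) for a filtration by closed unions of strata reduces the claim to showing $H_c^k(\M_T)=0$ for $k\le 2g-1$. Rationally this is the $\operatorname{Aut}(T)$-invariant part of $\bigotimes_v H_c^\bullet(\M_{g_v,n_v})$, so it suffices to bound the lowest nonvanishing degree of each factor.

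The key input is the lower bound on the degrees of $H_c^\bullet(\M_{h,m})$, obtained by Poincaré duality from Harer's virtual cohomological dimension. For $h\ge1$, $\M_{h,m}$ has complex dimension $3h-3+m$, and $H^j(\M_{h,m})=0$ for $j>4h-4+m$ (when $m\ge1$) or $j>4h-5$ (when $m=0$, $h\ge2$). Hence $H_c^k(\M_{h,m})=0$ for $k<2(3h-3+m)-(4h-4+m)=2h-2+m$ when $m\ge1$. Summing over the vertices of $T$, with $\sum g_v=g$ (trees have no loops) and $\sum n_v=2|E(T)|$, the lowest possible degree is
$$
\sum_v(2g_v-2+n_v)=2g-2|V(T)|+2|E(T)|=2g-2,
$$
since $|V|=|E|+1$. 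This naive bound only gives vanishing for $k<2g-2$, which is two short.

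The main obstacle is recovering these two degrees, and we plan to do it with the vanishing in the vcd. By Church--Farb--Putman and Morita--Sakasai--Suzuki, $H^{4h-5}(\M_h)=0$, and the analogous top-degree vanishing holds for $\M_{h,m}$ with $m\ge1$: $H^{4h-4+m}(\M_{h,m})=0$, with the exceptions $h=1$ handled directly (e.g.\ $H_c^\bullet(\M_{1,1})$ begins in degree $2$). This improves each factor's bound by one, so $H_c^k(\M_{h,m})=0$ for $k\le 2h-2+m$. Since every tree with an edge has at least two vertices, the total bound becomes $k\le 2g-2+|V(T)|-1\ge 2g-1$, which gives exactly $H_c^k(\M_T)=0$ for $k\le 2g-1$. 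The delicate part is confirming the vcd-vanishing statements for every pair $(g_v,n_v)$ that actually occurs, including genus one vertices with few markings. For those we expect to check directly, using that $\M_{1,m}$ has small cohomology ($H_c^\bullet(\M_{1,1})=\QQ$ in degree $2$, and similar computations for $\M_{1,2}$), before concluding via the long exact sequence.
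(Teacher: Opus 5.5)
Your overall architecture matches the paper's. The localization sequence reduces the theorem to $H_c^k(Z)=0$ for $k\le 2g-1$. Stratifying $Z$ by stable trees, with K\"unneth and finite-group invariants, then reduces that to a lower bound on the lowest nonvanishing degree of $H_c^*(\M_T)$.

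The gap is in that degree count, which is wrong in two ways.

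First, it is false that every vertex of $T$ has positive genus. Only the leaves must; internal vertices of valence at least $3$ can have genus zero. At such a vertex you only have $H_c^k(\M_{0,m})=0$ for $k<m-3$, one less than your formula $2h-2+m$ predicts. This cannot be improved, since $H^{m-3}(\M_{0,m})\neq 0$. So your ``naive'' count is really $2g-2-V_0(T)$, where $V_0(T)$ is the number of genus-zero vertices.

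Second, vanishing in the vcd is not available for $\M_{h,m}$ with $m\ge 2$. The input the paper uses, Bergstr\"om--Faber--Payne's Proposition~2.1 (resting on Church--Farb--Putman and Morita--Sakasai--Suzuki), improves Harer's bound only when $m=1$, giving $H_c^k(\M_{h,1})=0$ for $k<2h$. For $m\ge 2$ the bound $2h-2+m$ is sharp in general. For example, the top-weight cohomology of $\M_{1,m}$ from Chan--Galatius--Payne's computation of $\Delta_{1,m}$ lies in $H_c^m(\M_{1,m})$ for $m\ge 5$.

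Your per-stratum conclusion is in fact false. Take $g=3$ and let $T$ be the tripod with a genus-zero center and three genus-one leaves. Then $H_c^6(\M_T)\cong\bigl(H_c^2(\M_{1,1})^{\otimes 3}\bigr)^{S_3}\cong\QQ$, whereas your estimate claims vanishing for $k\le 2g-3+|V(T)|=7$.

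The missing idea is to take the extra degree only at the leaves, and to let the leaves pay for the genus-zero vertices. Each leaf has positive genus and valence one, so it contributes at least $2g_v=(2g_v-2+1)+1$. Using Harer's bound at every other vertex gives
\[
\operatorname{mindeg}H_c^*(\M_T)\ \ge\ \sum_{g_v>0}(2g_v-2+n_v)+\sum_{g_v=0}(n_v-3)+L(T)\ =\ 2g-2-V_0(T)+L(T),
\]
where $L(T)$ is the number of leaves. The tree identity $\sum_v(n_v-2)=2|E(T)|-2|V(T)|=-2$, together with $n_v\ge 3$ at every genus-zero vertex, yields $L(T)\ge V_0(T)+2$. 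Hence $\operatorname{mindeg}H_c^*(\M_T)\ge 2g$, which is exactly what the localization sequence needs.

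This is the paper's count. With it replacing the vcd-vanishing at every vertex, your argument goes through, and no separate treatment of $\M_{1,2}$ or other low-valence genus-one vertices is needed.
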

\begin{proof}
    Set
$D_g:=\mathcal M_g^{ct}\setminus\mathcal M_g$. The localization sequence
$$
\cdots \to H_c^{k-1}(D_g) \to H_c^k(\M_g) \xrightarrow[]{i_!} H_c^k(\M_g^{ct}) \to H_c^k(D_g) \to \cdots
$$
shows it suffices to prove 
$
H_c^k(D_g)=0
$
for every $k<2g$.

The strata of $D_g$ are indexed by stable trees $T$ with at least one edge. The stratum indexed by $T$ is
$$
\mathcal S_T
\cong
\left[
\prod_{v\in V(T)}
\mathcal M_{g(v),n(v)}
\big/
\operatorname{Aut}(T)
\right].
$$
Here $g(v)$ is the genus of the vertex $v$, and $n(v)$ is its
valence, counting incident half-edges. Harer's vcd calculation \cite{Harer1986}, together with
Poincaré duality, gives
$$
H_c^k(\mathcal M_{0,n})=0
\qquad (k<n-3).
$$
For $h>0$,
\cite[Proposition~2.1]{BergstromFaberPayne2024} gives
$$
H_c^k(\mathcal M_{h,n})=0
\quad\text{for}\quad
k<
\begin{cases}
2h, & n=1,\\
2h-2+n, & n\geq2.
\end{cases}
$$

Write $\operatorname{mindeg}H_c^*(X)
:=
\inf\{k:H_c^k(X)\neq0\}.$ Let $L(T)$ be the number of leaves of $T$, i.e., the number
of vertices of valence one. Stability forces every leaf to have positive genus. The K\"unneth formula and the exactness of finite group invariants over $\QQ$ therefore give
\begin{align*}
\operatorname{mindeg}H_c^*(\mathcal S_T)
&\geq  \sum_{\substack{g(v)>0\\ n(v)=1}}2g(v)+ \sum_{\substack{g(v)>0\\ n(v)\geq2}} (2g(v)-2+n(v))+\sum_{g(v)=0} (n(v)-3)\\
& = \sum_{\substack{g(v)>0\\ n(v)=1}}(2g(v)-1)+L(T)+ \sum_{\substack{g(v)>0\\ n(v)\geq 2}} (2g(v)-2+n(v))+\sum_{g(v)=0} (n(v)-3)       \\
&= \sum_{g(v)>0}\bigl(2g(v)-2+n(v)\bigr)
+
\sum_{g(v)=0}\bigl(n(v)-3\bigr)
+L(T).
\end{align*}

Let $V_0(T)$ denote the number of genus zero vertices. Counting the number of half-edges gives $$\sum_{v\in V(T)}n(v)=2|E(T)|.$$ Furthermore, since $T$
is a tree,
$$
\sum_{v\in V(T)}g(v)=g
\qquad \text{and}\qquad
|E(T)|=|V(T)|-1.
$$
Consequently, the preceding lower bound simplifies as follows:
\begin{align*}
\operatorname{mindeg}H_c^*(\mathcal S_T)
&\geq
\sum_{g(v)>0}\bigl(2g(v)-2+n(v)\bigr)
+\sum_{g(v)=0}\bigl(n(v)-3\bigr)+L(T)\\
&=
2\sum_v g(v)+\sum_v n(v)
-2|V(T)|-V_0(T)+L(T)\\
&=
2g+2|E(T)|-2|V(T)|-V_0(T)+L(T)\\
&=
2g-2-V_0(T)+L(T).
\end{align*}
Moreover,
$$
-2
=
\sum_{v\in V(T)}\bigl(n(v)-2\bigr)
=
-L(T)
+
\sum_{n(v)\geq3}\bigl(n(v)-2\bigr).
$$
Every genus zero vertex occurs in the final sum and contributes at least
one. Hence
$$
L(T)\geq V_0(T)+2,
$$
and therefore
$$
\operatorname{mindeg}H_c^*(\mathcal S_T)\geq2g.
$$

For $r\geq1$, let $Z_r\subseteq D_g$ be the closed locus of curves
with at least $r$ nodes. These loci give a finite filtration
$$
D_g=Z_1\supseteq Z_2\supseteq\cdots\supseteq Z_{N+1}=\varnothing,
$$
whose successive layers are
$$
Z_r\setminus Z_{r+1}
=
\bigsqcup_{|E(T)|=r}\mathcal S_T.
$$
The associated compactly supported cohomology spectral sequence is
$$
E_1^{-r,q}
=
\bigoplus_{|E(T)|=r}H_c^{q-r}(\mathcal S_T)
\Longrightarrow H_c^{q-r}(D_g);
$$
see \cite[(3)]{PetersenSpectralSequence}.
The preceding estimate shows that the $E_1$-page vanishes in total
degrees below $2g$. Since the filtration is finite, all
associated-graded pieces of $H_c^k(D_g)$ vanish for $k<2g$,
and hence
$$
H_c^k(D_g)=0\qquad(k<2g).
$$
\end{proof}

In degree $2g$, this theorem restricts to an injection
$
W_0H_c^{2g}(\M_g)
\hookrightarrow
W_0H_c^{2g}(\M_g^{ct}).
$ We recall the classes to which we will apply this injection. For every odd
$g\geq3$, the wheel graph $W_g$ determines a nonzero class
$$
[W_g]
\in
\left(
W_0H_c^{2g}(\mathcal M_g)
\right)^\vee.
$$
For $k\geq1$, choose
$
\sigma_{2k+1}
\in W_0H_c^{4k+2}(\mathcal M_{2k+1})
$
to be the classes corresponding to the free generators in
\cite[Theorem~2.6]{ChanGalatiusPayne2021}.
These classes are detected by the corresponding wheel graphs. The resulting map
$$
\mathbb L(\sigma_3,\sigma_5,\sigma_7,\ldots)
\lhook\joinrel\longrightarrow
\bigoplus_{g\geq2}
W_0H_c^{2g}(\mathcal M_g)
$$
from the free genus-graded Lie algebra on these classes is injective by
\cite[(2.4.2)]{ChanGalatiusPayne2021}. Theorem~\ref{thm: i: Mg to Mgct}
immediately gives the following corollary.

\begin{corollary}
The composite
$$
\mathbb L(\sigma_3,\sigma_5,\sigma_7,\ldots)
\lhook\joinrel\longrightarrow
\bigoplus_{g\geq2}
W_0H_c^{2g}(\mathcal M_g)
\xrightarrow{\ \bigoplus i_{!}\ }
\bigoplus_{g\geq2}
W_0H_c^{2g}(\mathcal M_g^{ct})
$$
is injective as a map of genus-graded vector spaces.
\end{corollary}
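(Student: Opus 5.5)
The corollary is a composition of two injective maps, so the plan is simply to verify that each map is injective in every genus and that the composite respects the genus grading. I would argue one genus at a time. Injectivity of $\mathbb L(\sigma_3,\sigma_5,\ldots)\hookrightarrow\bigoplus_{g\geq2}W_0H_c^{2g}(\mathcal M_g)$ is the cited result \cite[(2.4.2)]{ChanGalatiusPayne2021}. The genus grading on the free Lie algebra puts $\sigma_{2k+1}$ in genus $2k+1$, and brackets add genera. Hence the genus $g$ summand of $\mathbb L$ lands in $W_0H_c^{2g}(\mathcal M_g)$, and the map is a direct sum of injections $\mathbb L_g\hookrightarrow W_0H_c^{2g}(\mathcal M_g)$.

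The second step is to show that $i_!\colon W_0H_c^{2g}(\mathcal M_g)\to W_0H_c^{2g}(\mathcal M_g^{ct})$ is injective for each $g\geq 2$. Theorem~\ref{thm: i: Mg to Mgct} gives injectivity of $i_!$ on all of $H_c^{2g}(\mathcal M_g)$, so it only remains to check that $i_!$ carries $W_0$ into $W_0$. This holds because $i_!$ is a morphism of mixed Hodge structures: it is induced by an open immersion, which is strictly compatible with the weight filtration. Hence the restriction of $i_!$ to $W_0H_c^{2g}(\mathcal M_g)$ is an injection into $W_0H_c^{2g}(\mathcal M_g^{ct})$.

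Composing the two genus-wise injections and summing over $g\geq2$ gives the claimed injection of genus-graded vector spaces. Once Theorem~\ref{thm: i: Mg to Mgct} is available there is no real obstacle. The only point that needs care is the weight compatibility of $i_!$, which follows from functoriality of Deligne's mixed Hodge theory.
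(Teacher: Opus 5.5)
Your proposal is correct and is essentially the paper's argument. The paper likewise restricts the degree-$2g$ injectivity of $i_!$ from Theorem~\ref{thm: i: Mg to Mgct} to $W_0$, and composes it with the genus-graded injection of \cite[(2.4.2)]{ChanGalatiusPayne2021}; note that this restriction only requires $i_!(W_0)\subseteq W_0$, which holds because $i_!$ is a morphism of mixed Hodge structures, so strictness is true but not needed.
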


\begin{corollary}
The dimension
$
\dim_{\QQ}
W_0H_c^{2g}(\mathcal M_g^{ct})
$
grows at least exponentially with $g$.
\end{corollary}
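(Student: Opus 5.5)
The plan is to deduce the exponential lower bound from the preceding corollary, which gives an injection of genus-graded vector spaces
$$
\mathbb L(\sigma_3,\sigma_5,\sigma_7,\ldots)_g \hookrightarrow W_0H_c^{2g}(\mathcal M_g^{ct}).
$$
Here $\mathbb L(\ldots)_g$ is the genus $g$ graded piece of the free Lie algebra, with $\sigma_{2k+1}$ in genus $2k+1$. It therefore suffices to show that $\dim \mathbb L(\sigma_3,\sigma_5,\ldots)_g$ grows at least exponentially in $g$. This is a purely combinatorial fact about free Lie algebras, and it is exactly the input used in \cite{ChanGalatiusPayne2021} for the analogous statement on $\mathcal M_g$.

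To get an explicit bound, I will pass to a sub-Lie algebra on two generators. The elements $\sigma_3$ and $\sigma_5$ generate a free Lie subalgebra $\mathbb L(\sigma_3,\sigma_5)$, with $\sigma_3$ in genus $3$ and $\sigma_5$ in genus $5$. By the Poincaré--Birkhoff--Witt theorem, the universal enveloping algebra of $\mathbb L(\sigma_3,\sigma_5)$ is the free associative algebra $T(\sigma_3,\sigma_5)$. Its genus $g$ piece has dimension $c_g$, the number of words in letters of weights $3$ and $5$ with total weight $g$. These numbers satisfy $c_g=c_{g-3}+c_{g-5}$, so $c_g \sim C\beta^g$, where $\beta>1$ is the real root of $x^5=x^2+1$.

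The remaining step is to transfer this growth from the enveloping algebra back to the Lie algebra. The graded form of Witt's formula gives
$$
\dim \mathbb L_g=\frac1g\sum_{d\mid g}\mu(d)\,[\text{coefficient from } \log(1-t^3-t^5)^{-1}]_{g/d}.
$$
Its leading term is $\frac{1}{g}\beta^{g}$, up to a constant, plus contributions of order $\beta^{g/2}$. Hence $\dim\mathbb L_g\geq C'\beta^g/g$ for all sufficiently large $g$ with $c_g\neq 0$, which holds for all $g\geq 8$. Since $\beta^g/g$ still grows exponentially, this gives the corollary, possibly after shrinking the base to any $1<\beta'<\beta$.

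If a cruder argument is preferred, one can exhibit explicit linearly independent elements instead. In a free Lie algebra on $x=\sigma_3,y=\sigma_5$, the right-normed brackets $[w_1,[w_2,\ldots,[w_{m},y]\ldots]]$ with each $w_i\in\{x,\operatorname{ad}_x y\}$ and pairwise distinct Lyndon words are independent, giving a Fibonacci-type lower count. The only point needing care, and the main (mild) obstacle, is the Möbius-inversion estimate showing that the Lie piece is not much smaller than the associative piece. I would cite the standard asymptotic for graded Witt formulas rather than redo it.
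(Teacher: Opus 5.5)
Your proposal is correct and follows the paper's proof, which likewise combines the injection of $\mathbb L(\sigma_3,\sigma_5,\sigma_7,\ldots)$ from the preceding corollary with exponential growth of the genus-graded pieces of this free Lie algebra; the paper simply cites \cite{ChanGalatiusPayne2021} for that growth, whereas you sketch it via PBW and Witt's formula on $\sigma_3,\sigma_5$ alone. Two harmless slips: each summand of your displayed Witt formula needs the factor $g/d$ (equivalently, use the coefficients of $t\frac{d}{dt}\log(1-t^3-t^5)^{-1}$), which is what gives the leading term $\beta^g/g$ you state, and the optional ``cruder'' count fails as written without some further restriction on the sequences (e.g.\ $[\operatorname{ad}_x y,[x,y]]=0$), though it is not needed.
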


\begin{proof}
The genus-graded dimensions of
$\mathbb L(\sigma_3,\sigma_5,\sigma_7,\ldots)$
grow at least exponentially; see
\cite[the proof of Theorem~2.7]{ChanGalatiusPayne2021}.
\end{proof}

We now uncover a different structure in the weight zero cohomology of
compact type. Specifically, monomials in the odd wheels determine a polynomial family
of classes in its dual.

\subsection{Polynomial classes generated by odd wheels}
\label{subsec: Polynomial classes generated by odd wheels}

We use the structure of the weight zero cohomology of
$\mathcal A=\bigsqcup_{g\geq0}\mathcal A_g$ established in
\cite{BrownChanGalatiusPayneHopf} to construct a polynomial family in
the dual of $W_0H_c^\bullet(\mathcal M_g^{ct})$. For a partition
$\lambda=(h_1,\ldots,h_r)$ of $g$ into odd integers $h_i\geq3$,
we consider graphs obtained by pinching together the wheels
$W_{h_1},\ldots,W_{h_r}$. We prove that these graphs are cycles, that
any two pinched wheel graphs of type $\lambda$ are homologous, and
that the resulting classes are linearly independent as $\lambda$
varies.

Dualizing the isomorphism of
Theorem~\ref{thm:intro-weight zero-model} gives
\begin{equation}\label{eq:dual-bridgeless-weight zero}
H_k\bigl((\overline{\mathrm{GK}}_{g}^{bl,0})^\vee\bigr)
\cong
\left(W_0H_c^k(\mathcal M_g^{ct})\right)^\vee.
\end{equation}
We identify $(\overline{\mathrm{GK}}_{g}^{bl,0})^\vee$ with the chain
complex spanned by the same oriented graph generators
$[\Gamma,\omega]$, now with the edge-contraction differential. If
$\omega=(e_1<\cdots<e_m)$, then
\begin{equation*}\label{eq:edge-contraction-differential}
\partial[\Gamma,\omega]
=
\sum_{i=1}^m
(-1)^{i-1}
[\Gamma/e_i,\omega\setminus e_i].
\end{equation*}

For every odd integer $h\geq3$, let $W_h$ denote the wheel graph
with $h$ spokes and $2h$ edges. Every edge of $W_h$ lies in a
triangle, so contracting any edge produces a pair of parallel edges.
The resulting graph vanishes by the odd automorphism interchanging
these edges. Hence
$$
\partial W_h=0.
$$
Thus $W_h$ is a cycle; see also
\cite[Example~2.5]{ChanGalatiusPayne2021}.

Every nonzero edge contraction of a bridgeless graph is again
bridgeless. Consequently, there is an inclusion of chain complexes
$$
(\overline{\mathrm{GK}}_{g}^{bl,0})^\vee
\lhook\joinrel\longrightarrow
(\overline{\mathrm{GK}}_{g}^{0})^\vee.
$$
The wheel class is nonzero in the ordinary edge-contraction graph
complex by \cite[Theorem~2.6]{ChanGalatiusPayne2021}, as a consequence
of \cite[$\mathsection$9]{Willwacher2015}. It is therefore also
nonzero in the bridgeless subcomplex
$$
0\neq[W_h]
\in
H_{2h}\bigl((\overline{\mathrm{GK}}_{h}^{bl,0})^\vee\bigr).
$$

We now extend this construction to pinched collections of odd wheels.

\begin{definition}\label{def:wheel-gluing-datum}
Let $\lambda=(h_1,\ldots,h_r)$ be a partition of $g$ into odd
integers $h_i\geq3$. For each $i$, let $\mathcal W_i$ be a copy
of $W_{h_i}$ indexed by $i$.

A \emph{wheel-gluing datum of type $\lambda$} is a pair
$(\mathcal T,\mathbf v)$, where
\begin{enumerate}
\item $\mathcal T$ is a tree with vertex set $\{1,\ldots,r\}$;

\item $\mathbf v=(v_1,\ldots,v_r)$ is a collection of maps
$$
v_i\colon E_{\mathcal T}(i)\longrightarrow V(\mathcal W_i),
$$
where $E_{\mathcal T}(i)$ denotes the set of edges of
$\mathcal T$ incident to $i$. The maps $v_i$ need not be
injective.
\end{enumerate}

Let $\sim$ be the equivalence relation on
$\bigsqcup_{i=1}^r V(\mathcal W_i)$ generated by
$$
v_i(e)\sim v_j(e)
\qquad
\text{for every }e=\{i,j\}\in E(\mathcal T).
$$
The quotient graph
$$
\Gamma_{\lambda,\mathcal T,\mathbf v}
:=
\left(\bigsqcup_{i=1}^r\mathcal W_i\right)\big/\sim
$$
is called the \emph{pinched wheel graph determined by}
$(\lambda,\mathcal T,\mathbf v)$.
\end{definition}

Fix an edge ordering on each wheel factor and order the edges of a
pinched wheel graph by concatenation. Since $W_{h_i}$ has $2h_i$
edges, permuting the wheel factors changes the edge ordering by an even
permutation. The resulting orientation is therefore independent of the
ordering of the parts of $\lambda$.

Since the attachment maps $v_i$ need not be injective, several wheel factors
adjacent to $\mathcal W_i$ may be pinched to the same vertex of
$\mathcal W_i$. The tree $\mathcal T$ is auxiliary; its edges record
the vertex identifications and do not become edges of
$\Gamma_{\lambda,\mathcal T,\mathbf v}$.

\begin{example}
Let
$
\lambda=(7,3,5,5,9),
$
and let $\mathcal T$ be the following tree on the corresponding wheel
factors
$$
\mathcal T=
\vcenter{\hbox{
\begin{tikzpicture}[
    wheel/.style={
        circle,
        draw,
        minimum size=11mm,
        inner sep=1pt
    }
]
    \node[wheel] (W7)  at (0,0)    {$\W_1$};
    \node[wheel] (W3)  at (0,1.6)  {$\W_2$};
    \node[wheel] (W5a) at (-1.8,0) {$\W_3$};
    \node[wheel] (W5b) at (1.8,0)  {${\W_4}$};
    \node[wheel] (W9)  at (0,-1.6) {$\W_5$};

    \draw[thick] (W7)--(W3);
    \draw[thick] (W7)--(W5a);
    \draw[thick] (W7)--(W5b);
    \draw[thick] (W7)--(W9);
\end{tikzpicture}
}}.
$$
One choice of attachment maps $\mathbf v$ produces the following
pinched wheel graph

$$
\Gamma_{\lambda,\mathcal T,\mathbf v}
=
\vcenter{\hbox{
\begin{tikzpicture}[line cap=round,line join=round]

  \coordinate (v) at (-1.6,0);

  \attachedwheel{seven}{7}{v}{0}{1.6}
  \attachedwheel{three}{3}{v}{120}{0.82}
  \attachedwheel{fivea}{5}{v}{230}{1.02}

  \coordinate (w) at (seven-4);
  \attachedwheel{nine}{9}{w}{25.714}{1.12}

  \coordinate (u) at (seven-3);
  \attachedwheel{fiveb}{5}{u}{334.286}{0.98}

  \fill (v) circle (2pt);
  \fill (w) circle (2pt);
  \fill (u) circle (2pt);

  \node[font=\small] at (-2.55,1.72) {$W_3$};
  \node[font=\small] at (-2.55,-2.05) {$W_5$};
  \node[font=\small] at (0,-1.88) {$W_7$};
  \node[font=\small] at (2.85,2.47) {$W_9$};
  \node[font=\small] at (2.85,-2.30) {$W_5$};

\end{tikzpicture}
}}.
$$
\end{example}

\begin{remark}
The wheel-gluing datum is not determined by the resulting graph. In particular,
distinct trees and attachment maps may produce isomorphic quotients
$
\Gamma_{\lambda,\mathcal T,\mathbf v}
\cong
\Gamma_{\lambda,\mathcal T',\mathbf v'}.
$
Thus the gluing datum is a presentation of the pinched wheel graph, not
part of the graph itself. We will show that the resulting homology class
is independent of this presentation.
\end{remark}

For every partition $\lambda=(h_1,\ldots,h_r)$, let
$\Gamma_\lambda$ be the distinguished pinched wheel graph obtained by
identifying the central vertices of
$W_{h_1},\ldots,W_{h_r}$. We call $\Gamma_\lambda$ the
\emph{wheel bouquet of type $\lambda$}. We will show that every
pinched wheel graph of type $\lambda$ is homologous to
$\Gamma_\lambda$, and we denote their common class by
$
[W_\lambda].
$

\begin{proposition}\label{prop:pinched-wheel-independence}
Let
$\lambda=(h_1,\ldots,h_m)$
be a partition of $g$ into odd integers $h_i\geq3$. For every wheel-gluing datum
$(\mathcal T,\mathbf v)$ of type $\lambda$, the associated pinched
wheel graph
$\Gamma_{\lambda,\mathcal T,\mathbf v}$
is a cycle in
$(\overline{\mathrm{GK}}_{g}^{{bl},0})^\vee$.
Moreover, if
$(\mathcal T,\mathbf v)$ and
$(\mathcal T',\mathbf v')$
are any two wheel-gluing data of type $\lambda$, then
$$
[\Gamma_{\lambda,\mathcal T,\mathbf v}]
=
[\Gamma_{\lambda,\mathcal T',\mathbf v'}]
\in
H_{2g}
\bigl(
(\overline{\mathrm{GK}}_{g}^{{bl},0})^\vee
\bigr).
$$

In particular, every pinched wheel graph of type $\lambda$ is
homologous to the wheel bouquet $\Gamma_\lambda$. 

\end{proposition}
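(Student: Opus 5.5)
First I would show that every pinched wheel graph $\Gamma=\Gamma_{\lambda,\mathcal T,\mathbf v}$ is a cycle, by the same local argument used above for a single wheel. Every edge of $\Gamma$ lies in exactly one wheel factor $\mathcal W_i$, and inside $\mathcal W_i$ it lies in a triangle. Pinching only identifies vertices of different factors, so the two other edges of that triangle are still present and distinct in $\Gamma$. Contracting $e$ therefore produces a pair of parallel edges. The transposition of these two edges, fixing everything else, is an automorphism of $\Gamma/e$ inducing an odd permutation of its edges, so $[\Gamma/e]=0$. Hence every term of $\partial\Gamma$ vanishes. I would also check that $\Gamma$ is a legitimate generator of $(\overline{\mathrm{GK}}_{g}^{bl,0})^\vee$: it is connected with genus zero vertices, each vertex is at least trivalent, there are no loops, and $b_1(\Gamma)=\sum h_i=g$ because the identifications follow a tree. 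It is bridgeless because each edge lies on a cycle within its wheel. It has $2g$ edges, so it lies in degree $2g$.

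For independence of the presentation, I would reduce to an elementary move: changing a single attachment vertex. Any two gluing data can be connected by such moves together with changes of tree. For a change of tree, I would first move all attachments to the hub vertices; then every tree gives the wheel bouquet $\Gamma_\lambda$. Alternatively, one can move a leaf of $\mathcal T$ step by step. Concretely, I would show that if a wheel factor $\mathcal W_j$ is attached at a vertex $x\in V(\mathcal W_i)$ and $y$ is adjacent to $x$ in $\mathcal W_i$, then reattaching at $y$ gives a homologous graph. Some care is needed here: the whole branch of $\mathcal T$ hanging off $j$ travels with it, and since this branch is attached only at the single pinch point, it moves rigidly. Adjacent moves then connect any two vertices of $\mathcal W_i$, because wheels are connected.

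The key step, and the one I expect to be the main obstacle, is writing down the homotopy for the adjacent move. Let $\Gamma_x$ and $\Gamma_y$ be the two pinched graphs and let $e=\{x,y\}$. I would introduce the graph $H$ obtained from $\Gamma_x$ by adding a new edge $f$ from $x$ to the old attachment point on the $\mathcal W_j$ side. Equivalently, the vertex $x$ is split into two vertices joined by $f$: one side carries the $\mathcal W_i$-half-edges, the other the $\mathcal W_j$-branch half-edges. This makes $f$ a bridge, so it is not allowed. Instead, I would expand so that the new vertex keeps the $\mathcal W_j$-branch and the two half-edges of $e$ and one spoke, and then compute $\partial H$. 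The terms are contraction of the new edge, which gives $\Gamma_x$ or $\Gamma_y$ depending on the setup, plus contractions of wheel edges, which should again vanish by parallel edges except for a controlled few.

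If this direct construction fails, I would take the cleaner route: use the nonbridgeless (ordinary) graph complex with a bridge $f$ joining the $\mathcal W_j$-hub to $x$. Its boundary differs from $\Gamma_x$ only by terms containing bridges, which are absent in the bridgeless subcomplex. I would then express $\Gamma_x-\Gamma_y$ as $\partial$ of a bridgeless graph in which $\mathcal W_j$ is attached along a vertex of valence at least $4$ that has been split along $e$. Finding a correct explicit bridgeless chain $K$ with $\partial K=\pm(\Gamma_x-\Gamma_y)$, and tracking its signs, is where I expect the real work to be.
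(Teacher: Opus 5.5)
Your first half matches the paper: the triangle and parallel-edge argument shows every pinched wheel graph is a cycle, and reducing to ``move all attachments to the hubs, then any tree gives the bouquet'' is also the paper's endgame. The gap is the central step. You never produce a chain whose boundary is $\pm(\Gamma_x-\Gamma_y)$, and that is the entire content of the homology statement. Your candidates do not supply it:
\begin{itemize}
\item The first adds a bridge, which is inadmissible.
\item The modified expansion is not well specified. An edge $e$ has only one half-edge at $x$, and at a rim vertex, moving the branch, $e$ and a spoke to the new vertex leaves the old vertex bivalent.
\item The detour through the ordinary graph complex cannot work. A chain containing a bridge does not lie in the bridgeless subcomplex. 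In the ordinary complex the wedge of two wheels is itself $\partial$ of the graph joining their hubs by a bridge, so relations there say nothing about bridgeless homology.
\end{itemize}
The construction that works is close to what you were reaching for. Subdivide $e=xy$ by a new vertex $z$, order $e_1=xz<e_2=zy$ in place of $e$, and attach the whole branch at $z$. Contracting $e_1$ and $e_2$ gives $\Gamma_x$ and $\Gamma_y$ with opposite signs. You must then show that every other contraction vanishes.

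That verification needs a second idea you have not anticipated. When $e$ is a rim edge, every other wheel edge still lies in a triangle avoiding $e$, so all other terms vanish. When $e=cr$ is a spoke, the two rim edges at $r$ no longer lie in triangles after subdivision. For $h=3$ their contractions still produce parallel edges. For $h\geq5$ they vanish only because the contracted graph has a reflection fixing $c$, $z$ and the merged vertex. This reflection acts on edges by $\frac{h-1}{2}+\frac{h-3}{2}=h-2$ transpositions, which is odd exactly because $h$ is odd. It is an automorphism only if no other branch hangs off another rim vertex. Otherwise the extra terms can survive: for example, with a second branch at a rim vertex next to the merged vertex, the contraction of one rim edge at $r$ has no such symmetry. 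So your elementary move ``reattach a single $\mathcal W_j$ at an adjacent vertex'' is not established by this chain when the adjacency is a spoke. The paper handles this in two stages. It first uses rim slides to gather all rim-attached branches at one rim vertex $r$ and treats their union as a single branch $K$. Only then does it slide $K$ along the spoke to the hub.
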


\begin{proof}

Every edge $e$ of
$\Gamma_{\lambda,\mathcal T,\mathbf v}$
belongs to one of its wheel factors and is contained there in a triangle.
After contracting $e$, the other two edges of this triangle become
parallel. The automorphism interchanging these edges reverses the edge
orientation, so the corresponding term vanishes in
$(\overline{\mathrm{GK}}_{g}^{{bl},0})^\vee$. Therefore
$
\partial\Gamma_{\lambda,\mathcal T,\mathbf v}=0,
$
and every pinched wheel graph is a cycle.

To prove the second part of the proposition, we first establish two local relations that allow us to move the
attachment point of an attached branch across a wheel.

\smallskip
\noindent
\emph{Sliding along a rim edge.}
Let $W_h$ be one of the wheel factors, let $u$ and $v$ be
adjacent rim vertices, and let $K$ be a connected branch of the
wheel tree attached to $W_h$ at $u$. Denote the resulting graph by
$\Gamma_u$, and let $\Gamma_v$ be the graph obtained by moving the
attachment of $K$ from $u$ to $v$, leaving all other attachments
unchanged.

Subdivide $e=uv$ by inserting a new vertex $x$, write
$e_1=ux$ and $e_2=xv$, and attach $K$ at the new vertex $x$. Denote this graph by $\widetilde{\Gamma}_{e,K}$. Orient it by replacing $e$ in the original
edge ordering with the consecutive edges $e_1<e_2$. Contracting
$e_1$ recovers $\Gamma_u$, whereas contracting $e_2$ recovers
$\Gamma_v$, and these two terms occur with opposite signs.

\begin{center}
\centering
\begin{tikzpicture}[
    scale=1.25,
    transform shape,
    font=\footnotesize,
    x=1cm,
    y=1cm,
    graph edge/.style={line width=0.75pt},
    distinguished edge/.style={line width=1.35pt},
    vertex/.style={circle,fill=black,inner sep=1.35pt},
    marked vertex/.style={
        circle,
        draw=black,
        fill=white,
        line width=0.85pt,
        inner sep=2pt
    },
    component/.style={
        circle,
        draw=black,
        fill=white,
        minimum size=9mm,
        inner sep=1pt,
        font=\small
    },
    attachment/.style={
        densely dashed,
        line width=0.65pt
    }
]

\newcommand{\Wfivebase}[1]{%
    \coordinate (#1-c) at (0,0);
    \coordinate (#1-u) at (-1.30,0.42);
    \coordinate (#1-v) at (0,1.36);
    \coordinate (#1-a) at (1.30,0.42);
    \coordinate (#1-b) at (0.80,-1.10);
    \coordinate (#1-d) at (-0.80,-1.10);

    \draw[graph edge]
        (#1-v)--(#1-a)--(#1-b)--(#1-d)--(#1-u);

    \foreach \p in {u,v,a,b,d}{
        \draw[graph edge] (#1-c)--(#1-\p);
    }

    \foreach \p in {c,u,v,a,b,d}{
        \node[vertex] at (#1-\p) {};
    }
}

\begin{scope}[xshift=0cm]
    \Wfivebase{L}

    \draw[distinguished edge]
        (L-u)--node[midway,above left=-4pt] {$e$} (L-v);

    \node[marked vertex] at (L-u) {};
    \node[marked vertex] at (L-v) {};
    \node[left=3pt] at (L-u) {$u$};
    \node[above=3pt] at (L-v) {$v$};

    \node[component] (L-K) at (-1.95,1.18) {$K$};
    \draw[attachment] (L-u)--(L-K.south east);

    \node at (0,-1.75) {$\Gamma_u$};
\end{scope}

\begin{scope}[xshift=4.20cm]
    \Wfivebase{M}
    \coordinate (M-x) at ($(M-u)!0.5!(M-v)$);

    \draw[distinguished edge] (M-u)--(M-x);
    \draw[distinguished edge] (M-x)--(M-v);

    \node[
        fill=white,
        inner sep=0.5pt,
        font=\footnotesize
    ] at (-1.08,0.9) {$e_1$};

    \node[
        fill=white,
        inner sep=0.5pt,
        font=\footnotesize
    ] at (-0.50,1.3) {$e_2$};

    \node[marked vertex] at (M-u) {};
    \node[marked vertex] at (M-v) {};
    \node[marked vertex] at (M-x) {};
    \node[left=3pt] at (M-u) {$u$};
    \node[above=3pt] at (M-v) {$v$};
    \node[below right=-2pt] at (M-x) {$x$};

    \node[component] (M-K) at (-1.25,1.70) {$K$};
    \draw[attachment] (M-x)--(M-K.south east);

    \node at (0,-1.75) {$\widetilde{\Gamma}_{e,K}$};
\end{scope}

\begin{scope}[xshift=8.40cm]
    \Wfivebase{R}

    \draw[distinguished edge]
        (R-u)--node[midway,above left=-4pt] {$e$} (R-v);

    \node[marked vertex] at (R-u) {};
    \node[marked vertex] at (R-v) {};
    \node[left=3pt] at (R-u) {$u$};
    \node[above=3pt] at (R-v) {$v$};

    \node[component] (R-K) at (-0.80,1.92) {$K$};
    \draw[attachment] (R-v)--(R-K.south east);

    \node at (0,-1.75) {$\Gamma_v$};
\end{scope}

\end{tikzpicture}
\end{center}

Every other contraction vanishes. Indeed, every edge outside $W_h$ remains in a triangle. Moreover, because
$e$ is a rim edge, every edge of $W_h$ other than $e$ lies in a
triangle not containing $e$. Contracting any such edge therefore
produces parallel edges. After changing the overall sign of
$\widetilde\Gamma_{e,K}$, if necessary, we obtain
$$
\partial\widetilde\Gamma_{e,K}
=
\Gamma_u-\Gamma_v.
$$
Consequently,
$$
[\Gamma_u]=[\Gamma_v].
$$
Since the rim vertices form a cycle, an attached branch may be moved
between any two rim vertices.

\smallskip
\noindent
\emph{Sliding from the rim to the central vertex.}
Let $c$ be the central vertex of $W_h$. If no branches are
attached at rim vertices, there is nothing to prove. Otherwise, using the
preceding rim-slide relation, move every branch attached at a rim
vertex of $W_h$ to one fixed rim vertex $r$. Regard the union of
these branches as a single rooted connected subgraph $K$. Branches already
attached at $c$ are left unchanged. Denote this graph by $\Gamma_r$, and let $\Gamma_c$ be
obtained by moving the attachment of $K$ from $r$ to $c$.

Subdivide the spoke $e=cr$ by inserting a new vertex $x$,
write $e_1=rx$ and $e_2=xc$, and attach $K$ at $x$. Let
$\widetilde\Gamma_{e,K}$ denote the resulting graph,
oriented by replacing $e$ in the original edge ordering
with the consecutive edges $e_1<e_2$. Contracting $e_1$ recovers $\Gamma_r$, whereas contracting
$e_2$ recovers $\Gamma_c$, and these terms occur with
opposite signs.

\begin{center}
\centering
\begin{tikzpicture}[
    scale=1.25,
    transform shape,
    font=\footnotesize,
    x=1cm,
    y=1cm,
    graph edge/.style={line width=0.75pt},
    distinguished edge/.style={line width=1.35pt},
    vertex/.style={circle,fill=black,inner sep=1.35pt},
    marked vertex/.style={
        circle,
        draw=black,
        fill=white,
        line width=0.85pt,
        inner sep=2pt
    },
    component/.style={
        circle,
        draw=black,
        fill=white,
        minimum size=9mm,
        inner sep=1pt,
        font=\small
    },
    attachment/.style={
        densely dashed,
        line width=0.65pt
    }
]

\newcommand{\Wfivebase}[1]{%
    \coordinate (#1-c) at (0,0);
    \coordinate (#1-r) at (0,1.36);
    \coordinate (#1-a) at (1.30,0.42);
    \coordinate (#1-b) at (0.80,-1.10);
    \coordinate (#1-d) at (-0.80,-1.10);
    \coordinate (#1-q) at (-1.30,0.42);

    \draw[graph edge]
        (#1-r)--(#1-a)--(#1-b)--(#1-d)--(#1-q)--cycle;

    \foreach \p in {a,b,d,q}{
        \draw[graph edge] (#1-c)--(#1-\p);
    }

    \foreach \p in {c,r,a,b,d,q}{
        \node[vertex] at (#1-\p) {};
    }
}

\begin{scope}[xshift=0cm]
    \Wfivebase{L}

    \draw[distinguished edge]
        (L-r)--node[midway,xshift=4pt] {$e$} (L-c);

    \node[marked vertex] at (L-r) {};
    \node[marked vertex] at (L-c) {};
    \node[above, yshift=.5pt] at (L-r) {$r$};
    \node[below,yshift=-2pt] at (L-c) {$c$};

    \node[component] (L-K) at (-1.10,1.70) {$K$};
    \draw[attachment] (L-r)--(L-K.south east);

    \node at (0,-1.75) {$\Gamma_r$};
\end{scope}

\begin{scope}[xshift=4.20cm]
    \Wfivebase{M}
    \coordinate (M-x) at ($(M-r)!0.5!(M-c)$);

    \draw[distinguished edge]
        (M-r)--node[midway,xshift=5.5pt] {$e_1$} (M-x);

    \draw[distinguished edge]
        (M-x)--node[midway,xshift=5.5pt] {$e_2$} (M-c);

    \node[marked vertex] at (M-r) {};
    \node[marked vertex] at (M-x) {};
    \node[marked vertex] at (M-c) {};
    \node[above, yshift=.5pt] at (M-r) {$r$};
    \node[left=3pt] at (M-x) {$x$};
    \node[below,yshift=-2pt] at (M-c) {$c$};

    \node[component] (M-K) at (-1.25,1.65) {$K$};
    \draw[attachment] (M-x)--(M-K.east);

    \node at (0,-1.75) {$\widetilde{\Gamma}_{e,K}$};
\end{scope}

\begin{scope}[xshift=8.40cm]
    \Wfivebase{R}

    \draw[distinguished edge]
        (R-r)--node[midway,xshift=4pt] {$e$} (R-c);

    \node[marked vertex] at (R-r) {};
    \node[marked vertex] at (R-c) {};
      \node[above, yshift=.5pt] at (R-r) {$r$};
   \node[below,yshift=-2pt] at (R-c) {$c$};

    \node[component] (R-K) at (-1.2,1.5) {$K$};
    \draw[attachment] (R-c)--(R-K.east);

    \node at (0,-1.75) {$\Gamma_c$};
\end{scope}

\end{tikzpicture}

\end{center}

All remaining contractions vanish by the parallel-edge relation,
except possibly those of the two rim edges adjacent to $r$. When
$h=3$, these contractions also produce parallel edges. Suppose
$h\geq5$. After contracting either adjacent rim edge, the resulting
graph admits a reflection fixing the central vertex $c$, the subdivision vertex
$x$, and the contracted rim vertex. This reflection extends by the
identity over $K$ and over every branch already attached at $c$.
On the edge set, it exchanges
$
({h-1})/{2}
$
pairs of rim edges and
$
({h-3})/{2}
$
pairs of spokes. It therefore acts by
$$
\frac{h-1}{2}+\frac{h-3}{2}=h-2
$$
transpositions. Since $h$ is odd, this permutation is odd, so the
corresponding oriented graph is zero.

Thus, after an overall choice of sign,
$
\partial\widetilde\Gamma_{e,K}
=
\Gamma_c-\Gamma_{r},
$
and hence
$
[\Gamma_c]=[\Gamma_{r}].
$
It follows that all branches incident to $W_h$ may be moved to its
central vertex.

Apply the two sliding relations successively to move all
attachments on each wheel factor of
$\Gamma_{\lambda,\mathcal T,\mathbf v}$ to its central vertex.
Each move preserves the homology class and changes attachment
vertices only on the wheel being processed. Thus, after all
wheel factors have been processed, every identification is
between central vertices. Since $\mathcal T$ is connected,
all central vertices are identified to a single vertex,
giving the wheel bouquet $\Gamma_\lambda$. Therefore
$$
[\Gamma_{\lambda,\mathcal T,\mathbf v}]
=
[\Gamma_\lambda],
$$
and the class depends only on the unordered partition $\lambda$.
\end{proof}

Recall that
$
\W:=\QQ[W_3,W_5,W_7,\ldots]
$
is graded by genus, with $W_h$ in genus $h$, and let $\W_g$
denote its genus $g$ summand. Proposition~\ref{prop:pinched-wheel-independence} defines a linear map
$$
\Phi_g\colon
\W_g
\longrightarrow
H_{2g}\!\left(
(\overline{\mathrm{GK}}_{g}^{{bl},0})^\vee
\right)
\cong
\left(
W_0H_c^{2g}(\mathcal M_g^{ct})
\right)^\vee,
\qquad
W_\lambda\longmapsto[W_\lambda].
$$

Together with the assignment $\Phi(1)=1$, these maps assemble into a genus graded linear map
$$
\Phi\colon
\W
\longrightarrow
\QQ
\oplus
\bigoplus_{g\geq2}
\left(
W_0H_c^{2g}
(\mathcal M_g^{ct})
\right)^\vee,
$$
where $\QQ$ is placed in genus zero. We now prove
Theorem~\ref{thm:intro-wheel-polynomial} by showing that $\Phi$ is
injective.

The proof uses the multiplicative structure on the Quillen $E^1$-page
recalled in Sections~\ref{sec:weight zero-Ag} and
\ref{sec:K-theory-of-graphs}. We first place the dual bridgeless graph
complex in this framework.

Recall from \eqref{eq:GrC-definition} that $C_\bullet$ is the complex of oriented finite
graphs, filtered by first Betti number, and
$$
\operatorname{Gr}_g(C)
=
F_gC_\bullet/F_{g-1}C_\bullet.
$$
Its differential \eqref{eq:GrC-differential} is
$$
\partial_{\operatorname{Gr}}
=
\partial_c^{\mathrm{nl}}
-
\partial_d^{\mathrm{br}},
$$
where the two terms contract non-loop edges and delete bridges,
respectively.

\begin{lemma}\label{lem:bridgeless-inclusion-into-GrC}
The identity on oriented graph generators defines an injective map of
complexes
$$
\jmath_g\colon
\GKbarzeroBM
\lhook\joinrel\longrightarrow
\operatorname{Gr}_g(C),
\qquad
[G,\omega]\longmapsto [G,\omega].
$$

\end{lemma}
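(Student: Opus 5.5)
We must check that the identity on generators is well defined, lands in bidegree $g$, is injective, and commutes with the differentials.

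\emph{Generators, relations, and injectivity.} A generator of $\GKbarzeroBM$ is an oriented graph $[G,\omega]$ with the following properties: $G$ is a connected bridgeless stable graph of genus $g$, all of its vertices have genus zero, and it has no loops. Since all vertex weights vanish, its genus equals its first Betti number, so $b_1(G)=g$. Forgetting the (trivial) weights therefore gives a generator of $F_gC$ that does not lie in $F_{g-1}C$; it defines a basis element of $\operatorname{Gr}_g(C)$. Both complexes impose the same relation $[G,\omega]=\operatorname{sgn}(\tau)[G',\omega']$ under isomorphisms, and graph isomorphisms of weight-zero graphs are the same in both settings. Hence a graph with an odd automorphism vanishes on both sides, and the surviving isomorphism classes map to distinct basis elements. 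This gives a well-defined injective graded map, with degree $|E(G)|$ preserved.

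\emph{Chain map.} On the target, \eqref{eq:GrC-differential} has two parts. The term $\partial_d^{\mathrm{br}}$ deletes bridges, and $G$ has none, so it vanishes on the image. The term $\partial_c^{\mathrm{nl}}$ contracts non-loop edges, and $G$ is loopless, so it contracts every edge with sign $(-1)^{i-1}$. This matches the edge-contraction differential $\partial$ on $\GKbarzeroBM$ term by term.

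It remains to check that each term $[G/e_i,\omega_i]$ means the same thing on both sides; this is the main obstacle. Contracting a non-loop edge of a bridgeless graph yields a bridgeless graph of the same Betti number, and stability is preserved. Two points need care.

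\begin{itemize}
\item If $e_i$ has a parallel edge, then $G/e_i$ acquires a loop. In $\GKbarzeroBM$ such a graph is zero, since it is not a generator of the reduced quotient: it lies in $I+\delta I$, or equivalently its loop can be absorbed as a genus-one vertex. In $\operatorname{Gr}_g(C)$, however, loops are allowed. I would argue that such terms cancel because $G$ has an automorphism swapping the two parallel edges. Concretely, let $e,e'$ be parallel. The transposition $\tau$ of $e$ and $e'$ is an automorphism of $G$ (unless a marked leg breaks the symmetry; here $n=0$). It is odd, so $[G,\omega]=0$ already. Hence parallel-edge situations never arise from nonzero generators.
\item If $G$ has no parallel edges, every contraction $G/e_i$ is loopless with genus-zero vertices, hence a generator on both sides.
\end{itemize}

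With these points settled, $\jmath_g\circ\partial=\partial_{\operatorname{Gr}}\circ\jmath_g$ on all nonzero generators.
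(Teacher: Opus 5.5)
Your proof is correct and follows the same route as the paper: the two complexes have identical orientation relations, which gives well-definedness and injectivity; bridgelessness kills the deletion term $\partial_d^{\mathrm{br}}$; looplessness means every edge is contracted; and the odd automorphism swapping two parallel edges disposes of the degenerate terms. Your explicit observation that a generator with parallel edges is already zero fills in a case the paper's one-line proof leaves implicit, since it means no contraction ever creates a loop (which $\operatorname{Gr}_g(C)$ would otherwise retain). Your caveat about marked legs is unnecessary, because swapping two parallel edges fixes every vertex and leg.
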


\begin{proof}
The two complexes impose the same orientation relation. Every generator
of $\GKbarzeroBM$ is loopless and bridgeless, so $\partial_{\mathrm{Gr}}$ restricts to the edge contraction differential. If a contraction produces parallel edges, the resulting
graph vanishes in both complexes by the odd automorphism interchanging
them. Thus $\jmath_g$ is an injective map of complexes.
\end{proof}

Composing the induced map on homology with the map \eqref{eq:graph-to-Quillen} defines
$$
\Psi_g
:=
\rho_{g,g}\circ(\jmath_g)_*
\colon
H_{2g}\!\left(
(\overline{\mathrm{GK}}_g^{{bl},0})^\vee
\right)
\longrightarrow
{}^QE^1_{g,g}.
$$

\begin{proof}[Proof of Theorem~\ref{thm:intro-wheel-polynomial}]
The genus zero component of $\Phi$ is the identity on $\QQ$, so it suffices to prove injectivity in each positive genus.

We first compute $\Psi_h$ on a single odd wheel $W_h$, with $h \geq 3$.
Choose an edge
ordering $\omega_h$ compatible with the orientation convention of
\cite{BrownSchnetzWheelClasses}. By \eqref{eq:rho-graph-Laplacian},
$
\rho_{h,h}
=
\theta_{h,h}\circ(L_h)_*\circ\alpha_h
$. Here $L_h$ is the map from \eqref{eq:Lg map}.
First note
$$\alpha_h((\jmath_h)_*([W_h, \omega_h]))=[\sigma_{W_h},\omega_h]\in
\widetilde H_{2h}\!\left(
\operatorname{Gr}_h(M_{\mathrm{Gr}})
\right),$$ where the right hand side denotes the homology class represented by the oriented metric graph cell indexed by $W_h$. Choose the integral basis of $H_1(W_h;\ZZ)$ used in \cite[$\mathsection$1.1,~Figure~1]{BrownSchnetzWheelClasses}. In this basis, the quadratic form $Q_{W_h,\ell}$ is represented by the graph-Laplacian matrix $\Lambda_{W_h}(\ell)$ displayed in \cite[$\mathsection$1.1, equation~(4)]{BrownSchnetzWheelClasses}. The restriction of $L_h$ to the wheel
cell is therefore
$
\ell\longmapsto[\Lambda_{W_h}(\ell)].
$
The corresponding cone in $P_h$ is
$$
\tau_{W_h}
:=
\left\{
\Lambda_{W_h}(\ell)
\ \middle|\
\ell_e>0
\right\}.
$$
Let $[\tau_{W_h}]$ denote the image of the oriented wheel
cycle under $(L_h)_*$.
By \cite[$\mathsection$1.3 and Theorem~4(i)]
{BrownSchnetzWheelClasses},
$$
(L_h)_*[\sigma_{W_h},\omega_h]
=
[\tau_{W_h}]
\neq0.
$$

Consequently,
$
\Psi_h\bigl([W_h,\omega_h]\bigr)
=
\theta_{h,h}\bigl([\tau_{W_h}]\bigr)
\neq 0.
$
For simplicity we also denote 
$\theta_{h,h}([\tau_{W_h}])\in{}^QE^1_{h,h}$ by $[\tau_{W_h}]$.

Now let
$
\Gamma_\lambda
=
W_{h_1}\vee\cdots\vee W_{h_r}
$ 
be the wheel bouquet of type $\lambda$, with edge ordering
$\omega_\lambda=\omega_{h_1}*\cdots*\omega_{h_r}$. Multiplication in $\operatorname{Gr}(C)$ is induced by disjoint union \eqref{eq:GrC-product}.
We claim that the image of the wheel bouquet in
$H_\bullet(\operatorname{Gr}(C))$ agrees with the product of the
individual wheel classes. We compare the wedge and disjoint
union of two wheel bouquets $G$ and $H$. Let $B(G,H)$ be the graph
obtained by joining their central vertices by a new bridge $b$, ordered
before the edges of $G$ and $H$. Then
$$
\partial_{\operatorname{Gr}}
[B(G,H),b*\omega_G*\omega_H]
=
[G\vee H,\omega_G*\omega_H]
-
[G\sqcup H,\omega_G*\omega_H].
$$

Indeed, contracting $b$ gives $G\vee H$, while deleting $b$ gives
$G\sqcup H$.  Every remaining edge lies in a wheel triangle, so its
contraction produces parallel edges and hence vanishes.

Iterating this relation gives
$$
(\jmath_g)_*
[\Gamma_\lambda,\omega_\lambda]
=
\prod_{i=1}^r
(\jmath_{h_i})_*[W_{h_i},\omega_{h_i}]
$$
in $H_{2g}(\operatorname{Gr}_g(C))$.
Since $\rho$ is a morphism of bigraded
algebras, 
\begin{equation}\label{eq:canonical-bouquet-image}
\Psi_g\bigl([\Gamma_\lambda,\omega_\lambda]\bigr)
= \prod_{i=1}^r[\tau_{ W_{h_i}}]
\in{}^QE^1_{g,g}.
\end{equation}
No Koszul signs arise because each wheel has even homological degree.
It remains to prove linear independence. 

Suppose that, for some fixed
$g$, there is a relation
$$
\sum_{\lambda}q_\lambda
[\Gamma_\lambda,\omega_\lambda]
=
0
$$
in $H_{2g}(
(\overline{\mathrm{GK}}_{g}^{{bl},0})^\vee),
$
where $\lambda$ ranges over partitions of $g$ into odd integers at
least $3$. Applying $\Psi_g$ and using
\eqref{eq:canonical-bouquet-image} gives
$$
\sum_{\lambda}q_\lambda
\prod_{i=1}^r[\tau_{ W_{h_i}}]=0
$$
in ${}^QE^1_{g,g}$. By \cite[Theorem~7.4]{BrownChanGalatiusPayneHopf}, there is an
injective homomorphism of bigraded commutative algebras
$$
\mathbb Q[W_3,W_5,W_7,\ldots]
\otimes\mathbb Q[x]/(x^2)
\lhook\joinrel\longrightarrow{}^QE^1
$$
sending $W_h$ to $[\tau _{W_h}]$. The corresponding wheel monomials
are therefore linearly independent, so $q_\lambda=0$ for every
$\lambda$. Thus every $\Phi_g$ is injective, and hence so is
$\Phi$.
\end{proof}

\subsection{Injectivity under the extended Torelli morphism}\label{subsec: Injectivity under the extended Torelli morphism}

The preceding subsection constructed a polynomial family of classes in
the dual of $
W_0H_c^\bullet(\mathcal M_g^{ct})$. We now show that its image under the extended Torelli morphism remains linearly independent. Let
$$
\Phi \colon
\mathcal W
\lhook\joinrel\longrightarrow \QQ \oplus
\bigoplus_{g\geq2}
\left(
W_0H_c^{2g}
(\mathcal M_g^{ct})
\right)^\vee
$$
be the injection constructed in
Theorem~\ref{thm:intro-wheel-polynomial}. 
For $g\geq2$, write
$$
(\tau_g^{ct})_*
:=
\bigl((\tau_g^{ct})^*\bigr)^\vee.
$$
We identify the formal genus zero summand $\QQ$ with
$\left(W_0H_c^0(\A_0)\right)^\vee$.

\begin{theorem}
\label{thm: wheel classes inject under torelli}
The composite
$$
\W
\xrightarrow{\ \Phi\ }
\QQ\oplus
\bigoplus_{g\geq2}
\left(
W_0H_c^{2g}(\M_g^{ct})
\right)^\vee
\xrightarrow{\ \operatorname{id}_{\QQ}\oplus
\bigoplus_{g\geq2}(\tau_g^{ct})_*\ }
\bigoplus_{g\geq0}
\left(
W_0H_c^{2g}(\A_g)
\right)^\vee
$$
is injective. 
\end{theorem}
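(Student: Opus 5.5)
The plan is to factor the Torelli pushforward through the Quillen $E^1$-page, so that the injectivity argument already used for $\Phi$ can be reused. The paper announces that Lemma~\ref{lem:bridgeless-graph-complex-to-Quillen} shows the comparison $\Psi_g$ is compatible with pushforward under $\tau_g^{ct}$. I would prove (or invoke) this in the following form: the diagram
$$
H_{2g}\bigl(\GKbarzeroBM\bigr)\cong \bigl(W_0H_c^{2g}(\M_g^{ct})\bigr)^\vee
\xrightarrow{(\tau_g^{ct})_*}
\bigl(W_0H_c^{2g}(\A_g)\bigr)^\vee=\A^{\mathrm{BM}}_{g,g}
\xrightarrow{(\iota_{\A})_{g,g}} {}^QE^1_{g,g}
$$
agrees with $\Psi_g=\rho_{g,g}\circ(\jmath_g)_*$.

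To establish this, I would work on the tropical side. Via \eqref{Tropical is weight zero borel more}, the dual of $(\tau_g^{ct})^*$ on weight zero is induced by the tropical Torelli map $M_g^{ct,\mathrm{trop}}\to A_g^{\mathrm{trop}}$. On the cone of a bridgeless graph $\Gamma$, this map sends edge lengths $\ell$ to the class of $Q_{\Gamma,\ell}$. The graph is bridgeless, so this form is positive definite on $H_1(\Gamma)$ of rank $g$, since the loopless genus zero graphs have $b_1=g$. Composing with $\pi_g$, which maps to relative Borel--Moore homology and then excises $A_{g-1}^{\mathrm{trop}}$, lands in $H^{\mathrm{BM}}_{2g}(P_g/\mathrm{GL}_g(\ZZ))$. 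There it agrees cell by cell with $(L_g)_*\circ\alpha_g\circ(\jmath_g)_*$. Applying $\theta_{g,g}$ and using \eqref{eq:iota-A-concrete} together with \eqref{eq:rho-graph-Laplacian} then gives $(\iota_\A)_{g,g}\circ(\tau^{ct}_g)_*=\Psi_g$.

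The main obstacle is checking that the tropicalization of $\tau_g^{ct}$ really is the graph Laplacian map, compatibly with the identifications of Chan--Galatius--Payne and of Brandt et al. The most delicate parts are the orientation and sign conventions on cells, and the step of passing through the one-point compactification. I would try first to cite the functoriality of weight zero compactly supported cohomology with respect to toroidal morphisms, together with the standard description of the tropical Torelli map (Brannetti--Melo--Viviani, Chan). I would then track the signs by comparing edge orderings directly.

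Granting the compatibility, the conclusion follows quickly. Suppose $x=\sum_\lambda q_\lambda W_\lambda\in\W_g$ satisfies $(\tau^{ct}_g)_*\Phi_g(x)=0$. Applying $(\iota_\A)_{g,g}$ gives $\Psi_g(\Phi_g(x))=0$. By \eqref{eq:canonical-bouquet-image}, this says $\sum_\lambda q_\lambda\prod_i[\tau_{W_{h_i}}]=0$ in ${}^QE^1_{g,g}$. By \cite[Theorem~7.4]{BrownChanGalatiusPayneHopf} the wheel monomials are linearly independent there, so all $q_\lambda=0$. The genus zero summand is the identity, and the composite is genus graded, so it is injective.
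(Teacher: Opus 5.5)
Your proposal is correct and follows essentially the same route as the paper: the paper splits your single compatibility statement into Lemma~\ref{Lemma: Algebraic and tropical torelli comparison} (the algebraic Torelli pushforward agrees with tropical Borel--Moore pushforward) and Lemma~\ref{lem:bridgeless-graph-complex-to-Quillen} (on the wheel cells, tropical Torelli followed by $(\iota_{\A})_{g,t}$ equals $\rho_{g,t}\circ(\jmath_g)_*$). It then concludes exactly as you do via \eqref{eq:canonical-bouquet-image} and \cite[Theorem~7.4]{BrownChanGalatiusPayneHopf}.

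One minor correction: $Q_{\Gamma,\ell}$ is positive definite on $H_1(\Gamma;\RR)$ for any graph with positive edge lengths. What places the cell in the open stratum $P_g/\operatorname{GL}_g(\ZZ)$ is that all vertex weights vanish, not bridgelessness.
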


The proof rests on two compatibility statements relating the extended
Torelli pushforward to the map in \eqref{eq:graph-to-Quillen}.

\begin{lemma}\label{Lemma: Algebraic and tropical torelli comparison}
Let $g\geq2$. For every $k$, the following diagram commutes
$$
\begin{tikzcd}[column sep=large,row sep=large]
\left(W_0H_c^k(\M_g^{ct})\right)^\vee
    \arrow[r, "(\tau_g^{ct})_*"]
    \arrow[d, "\kappa_{\M^{ct}}^\vee"']
&
\left(W_0H_c^k(\A_g)\right)^\vee
    \arrow[d, "\kappa_{\A}^\vee"]
\\
H_k^{\mathrm{BM}}(M_g^{ct,\mathrm{trop}})
    \arrow[r, "(\tau_g^{ct,\mathrm{trop}})_*"']
&
H_k^{\mathrm{BM}}(A_g^{\mathrm{trop}}).
\end{tikzcd}
$$
Here the vertical maps are the canonical comparison isomorphisms, and
the lower map is Borel--Moore pushforward under the proper compact type
tropical Torelli map.
\end{lemma}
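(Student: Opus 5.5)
The plan is to deduce the square from functoriality of the Chan--Galatius--Payne comparison between weight zero compactly supported cohomology and the Borel--Moore homology of the tropicalization, applied to the proper morphism $\tau_g^{ct}$ of toroidal pairs.

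\textbf{Step 1: extend $\tau_g^{ct}$ to toroidal compactifications.} First fix a smooth toroidal compactification $\overline{\A}_g$ of $\A_g$. Here I would take a perfect cone (or second Voronoi) compactification whose fan $\Sigma$ refines the images of the cones $\mathbb R_{\geq0}^{E(\Gamma)}$ under the graph Laplacian. After replacing $\Mbar_g$ by a toroidal blowup if needed, which does not change $W_0H_c$ or the cone complex up to subdivision, $\tau_g^{ct}$ extends to a toroidal morphism
$$\bar\tau\colon(\M_g^{ct}\subset\Mbar_g)\to(\A_g\subset\overline{\A}_g).$$
The classical result of Mumford and Namikawa that the Torelli map extends to the second Voronoi compactification provides this extension. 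By construction, the induced map of generalized cone complexes is the tropical Torelli map $\tau^{ct,\mathrm{trop}}_g$. This map sends a bridgeless metric graph to the class of the quadratic form $Q_{G,\ell}$ on $H_1(G)$. It is proper because bridgeless graphs have positive definite Laplacian on each cone, so no cone is mapped into the boundary.

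\textbf{Step 2: naturality of the comparison.} The isomorphisms $\kappa^\vee$ arise from the identification
$$W_0H_c^k(X)\cong \widetilde H^{k-1}(\Delta(D);\QQ)$$
of \cite[Theorem~5.8]{ChanGalatiusPayne2021}. This identification comes from the $E_1$-row $H^0(\widetilde D^{(\bullet)})$ of Deligne's weight spectral sequence, which is the simplicial cochain complex of the boundary complex. A toroidal morphism $\bar\tau$ with $\bar\tau^{-1}(\partial\overline{\A}_g)=D$ induces a map of boundary stratifications. The pullback on weight zero rows is therefore the cochain map dual to the induced map of (subdivided) cone complexes. I would verify this at the level of strata: a stratum of $\Mbar_g$ maps into the stratum of $\overline{\A}_g$ corresponding to the smallest cone containing the image cone, and $H^0$ pullback matches the cellular pushforward after subdivision. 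Dualizing, and identifying $H^{\mathrm{BM}}_k$ of a cone complex with reduced homology of the link shifted by one, gives commutativity of the square.

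\textbf{Step 3: compatibility with the $\A_g$ identification.} It remains to check that $\kappa_\A^\vee$, defined via \cite{BrandtEtAlTopWeightAg,OdakaOshima2021}, agrees with the boundary-complex identification for the chosen $\overline{\A}_g$. It should, since both are induced by the same weight spectral sequence.

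\textbf{Main obstacle.} The hardest step will be Step 1 combined with the subdivision issue in Step 2. The cone of a bridgeless graph need not map onto a single cone of $\Sigma$; it is only contained in one, so $\tau^{ct,\mathrm{trop}}_g$ is not a morphism of cone complexes in the strict sense. I would handle this by passing to a common refinement on the source, realized by a toroidal blowup of $\Mbar_g$ supported on the boundary. I would then use invariance of both $W_0H_c$ and $H^{\mathrm{BM}}$ under such subdivisions, as in \cite[\S5]{ChanGalatiusPayne2021}.
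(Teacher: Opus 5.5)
Your proposal follows essentially the same route as the paper: extend the Torelli map toroidally (the paper uses the local description of Alexeev--Brunyate rather than Mumford--Namikawa), identify the induced map of cone complexes with the tropical Jacobian $\ell\mapsto[Q_{\Gamma,\ell}]$, pass to compatible smooth toroidal subdivisions, and invoke functoriality of the Chan--Galatius--Payne and Brown--Chan--Galatius--Payne weight zero comparisons for proper toroidal morphisms. Your Step~2 spells that functoriality out in more detail than the paper does. One small correction on properness: positive definiteness of $Q_{\Gamma,\ell}$ on $H_1(\Gamma;\RR)$ holds for every graph, and weighted graphs do land in lower-rank strata, so ``no cone is mapped into the boundary'' is not the right reason; what matters is that every edge of a bridgeless graph has nonzero coedge on $H_1$, so $Q_{\Gamma,\ell}\neq 0$ whenever $\ell\neq 0$.
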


\begin{proof}
The local toroidal description of the extended Torelli morphism in
\cite[Theorem~3.2]{AlexeevBrunyate} induces the tropical Torelli map
$$
\tau_g^{ct,\mathrm{trop}}\colon
M_g^{ct,\mathrm{trop}}
\longrightarrow
A_g^{\mathrm{trop}}.
$$
On the cone indexed by a weighted stable graph $(\Gamma,w)$, the map
is given by
$$
(\ell_e)_{e\in E(\Gamma)}
\longmapsto
[Q_{\Gamma,\ell}].
$$
Here
$
|w|:=\sum_{v\in V(\Gamma)}w(v),
$
and $Q_{\Gamma,\ell}$ is regarded as a positive-semidefinite quadratic
form on
$
H_1(\Gamma;\RR)\oplus\RR^{|w|}
$
by extending it by zero on the second summand.
This is the tropical Jacobian construction of
\cite[Definition~4.1.1 and Theorem~4.1.7]
{VivianiTropicalizing}.

After passing to compatible smooth toroidal subdivisions, the
weight zero comparison isomorphisms
\cite[Corollary~5.6 and Theorem~5.8]{ChanGalatiusPayne2021}
and
\cite[Proposition~2.1]{BrownChanGalatiusPayneHopf}
identify the algebraic and tropical pullbacks with the maps induced by
this same map of cones. By functoriality of the weight zero comparison for proper
morphisms of toroidal pairs, we obtain
$$
\kappa_{\M^{ct}}
\circ
(\tau_g^{ct,\mathrm{trop}})^*
=
(\tau_g^{ct})^*
\circ
\kappa_{\A}.
$$
Dualizing gives the claimed square.
\end{proof}

\begin{lemma}
\label{lem:bridgeless-graph-complex-to-Quillen}
Fix $g\geq2$ and $t\geq0$, and set $k=g+t$. Then the following
diagram commutes
$$
\begin{tikzcd}[column sep=large,row sep=large]
H_k(\GKbarzeroBM)
    \arrow[r, "(\jmath_g)_*"]
    \arrow[d, "\nu_g"']
&
H_k\!\left(\operatorname{Gr}_g(C)\right)
    \arrow[d, "\rho_{g,t}"]
\\
H_k^{\mathrm{BM}}
\left(M_g^{ct,\mathrm{trop}}\right)
    \arrow[r,
    "(\iota_{\A})_{g,t}\circ
    (\tau_g^{ct,\mathrm{trop}})_*"']
&
{}^QE^1_{g,t}.
\end{tikzcd}
$$
Here $\nu_g$ is induced by the cellular assignment
$
[\Gamma,\omega]\longmapsto[\sigma_\Gamma,\omega],
$
where $\sigma_\Gamma$ is the open metric-graph cell
indexed by $\Gamma$, oriented by $\omega$.
\end{lemma}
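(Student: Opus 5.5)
The plan is to check commutativity at the level of cellular chains, one generator at a time. Both composites begin with a graph cycle $z=\sum_i a_i[\Gamma_i,\omega_i]$ in $\GKbarzeroBM$. Every $\Gamma_i$ is loopless, bridgeless, has only genus zero vertices, and satisfies $b_1(\Gamma_i)=g$.

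\textbf{Upper-right composite.} By \eqref{eq:rho-graph-Laplacian}, $\rho_{g,t}=\theta_{g,t}\circ(L_g)_*\circ\alpha_g$. Since $\alpha_g$ sends $z$ to the cellular cycle $\sum_i a_i[\sigma_{\Gamma_i},\omega_i]$, this composite equals $\theta_{g,t}$ applied to the Borel--Moore class of $\sum_i a_i (L_{\Gamma_i})_*[\sigma_{\Gamma_i},\omega_i]$ in $P_g/\operatorname{GL}_g(\ZZ)$.

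\textbf{Lower-left composite.} By \eqref{eq:iota-A-concrete}, $(\iota_{\A})_{g,t}=\theta_{g,t}\circ\pi_g$. It therefore suffices to prove
$$
\pi_g\circ(\tau_g^{ct,\mathrm{trop}})_*\circ\nu_g=(L_g)_*\circ\alpha_g\circ(\jmath_g)_*
$$
as maps into $H_k^{\mathrm{BM}}(P_g/\operatorname{GL}_g(\ZZ))$. After this reduction $\theta_{g,t}$ appears identically on both sides and can be ignored.

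\textbf{Identifying the cell maps.} The description of $\tau_g^{ct,\mathrm{trop}}$ recorded in the proof of Lemma~\ref{Lemma: Algebraic and tropical torelli comparison} sends the cone of $\Gamma$ to $[Q_{\Gamma,\ell}]$, extended by zero on $\RR^{|w|}$. For the cells occurring in $z$, all vertex weights vanish, so $|w|=0$. Since $\Gamma$ is bridgeless, $Q_{\Gamma,\ell}$ is positive definite on $H_1(\Gamma;\RR)\cong\RR^g$. Hence on the open cell $\sigma_\Gamma$ the tropical Torelli map agrees with the graph Laplacian map $L_\Gamma$, and both land in the open stratum $P_g/\operatorname{GL}_g(\ZZ)$.

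Now consider the faces. In $M_g^{ct,\mathrm{trop}}$, the faces of $\sigma_\Gamma$ are contractions (lengths $\to0$), together with the cone at infinity. In $\operatorname{Gr}_g(M_{\mathrm{Gr}})$, lengths $\to\infty$ are collapsed to the basepoint and mapped by $L_g$ to $\infty$. The map $\pi_g$ (restriction to relative BM homology, then excision) likewise discards everything landing in $A_{g-1}^{\mathrm{trop}}$ or at infinity. Both composites thus amount to taking the fundamental class of the same chain $\sum_i a_i[\sigma_{\Gamma_i},\omega_i]$, mapped by the same cell-wise formula into the one-point compactification of $P_g/\operatorname{GL}_g(\ZZ)$.

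\textbf{Checking orientations and functoriality.} Two compatibilities must hold:
\begin{enumerate}
\item Under the identification \eqref{Tropical is weight zero borel more}, $\nu_g$ is realized by the same oriented cells. This holds because $\nu_g$ is the cellular model of $H^{\mathrm{BM}}_\bullet$ of the cone complex, with cells oriented by edge orderings.
\item Borel--Moore pushforward along a proper map commutes with restriction to the open subset $P_g/\operatorname{GL}_g(\ZZ)\subset A_g^{\mathrm{trop}}$. Applied to the inclusion of the open locus of $M_g^{ct,\mathrm{trop}}$ where the Torelli image has full rank, this is the naturality of BM restriction to open subsets. The complement of that locus maps into $A_{g-1}^{\mathrm{trop}}$, and cells of $z$ meet it only in their boundaries, which contribute nothing after $\pi_g$.
\end{enumerate}

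\textbf{Main obstacle.} I expect the hardest step to be this last compatibility. One must verify that the BM fundamental class of the cycle in $M_g^{ct,\mathrm{trop}}$, pushed forward and then restricted, coincides with the class produced by the pointed map $L_g$ on the quotient $\operatorname{Gr}_g(M_{\mathrm{Gr}})$. I would handle this by working with $\widetilde H$ of one-point compactifications on both sides. The restriction $\pi_g$ then becomes the quotient collapsing $A_{g-1}^{\mathrm{trop}}\cup\{\infty\}$ to a point. Both composites are then induced by one continuous pointed map from $|{\textstyle\bigcup}\sigma_{\Gamma_i}|^+$ to $(P_g/\operatorname{GL}_g(\ZZ))^+$, and that map factors in the two ways shown in the diagram.
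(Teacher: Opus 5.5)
Your proposal is correct and follows essentially the same route as the paper. You cancel $\theta_{g,t}$ using $(\iota_{\A})_{g,t}=\theta_{g,t}\circ\pi_g$ and $\rho_{g,t}=\theta_{g,t}\circ(L_g)_*\circ\alpha_g$, observe that $\nu_g$ and $\alpha_g\circ(\jmath_g)_*$ are represented by the same oriented cells, and identify the compact type tropical Torelli map on zero-weight cells with the graph Laplacian map into $P_g/\operatorname{GL}_g(\ZZ)$. Your treatment of faces and one-point compactifications just makes explicit a step the paper leaves implicit.

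One small correction: $Q_{\Gamma,\ell}$ is positive definite on $H_1(\Gamma;\RR)$ for every graph with positive edge lengths, so bridgelessness plays no role there. What places the cell in the rank-$g$ stratum is $|w|=0$, equivalently $b_1(\Gamma)=g$.
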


\begin{proof}
Let
$
z=\sum_i a_i[G_i,\omega_i]
$
be a cycle in $\GKbarzeroBM$. First note that $\nu_g([z])$ and
$\alpha_g((\jmath_g)_*[z])$ are represented by the same cellular chain
$$
\sum_i a_i[\sigma_{G_i},\omega_i]
$$
of oriented edge-length cones, viewed respectively in
$M_g^{ct,\mathrm{trop}}$ and
$\operatorname{Gr}_g(M_{\mathrm{Gr}})$. 

By \eqref{eq:iota-A-concrete} and
\eqref{eq:rho-graph-Laplacian},
$$
(\iota_{\A})_{g,t}
=
\theta_{g,t}\circ\pi_g,
\qquad
\rho_{g,t}
=
\theta_{g,t}\circ(L_g)_*\circ\alpha_g.
$$
It therefore suffices to compare the two maps before applying
$\theta_{g,t}$.

Each $G_i$ has first Betti number $g$ and all vertex weights are zero. Hence, the compact type tropical Torelli map sends its
edge-length cone into the open rank-$g$ stratum
$$
P_g/\operatorname{GL}_g(\ZZ)
\subseteq
A_g^{\mathrm{trop}}
$$
by
$$
\ell
\longmapsto
\Bigg[
\sum_{e\in E(G_i)}
\ell_e(e^\vee)^2
\Bigg].
$$
This is precisely the restriction of $L_g$ to the cell indexed by
$G_i$. The two cellular models use the same edge-length coordinates
and the orientation determined by $\omega_i$, so the induced maps
agree on each cellular generator. Hence
$$
\pi_g\circ(\tau_g^{ct,\mathrm{trop}})_*\circ\nu_g
=
(L_g)_*\circ\alpha_g\circ(\jmath_g)_*.
$$
\end{proof}

\begin{proof}[Proof of Theorem~\ref{thm: wheel classes inject under torelli}]
All maps preserve the genus grading, and the genus zero component is the
identity on $\QQ$. It therefore suffices to prove injectivity in each
genus $g\geq2$.

Let
$$
z
=
\sum_{\lambda}q_\lambda W_\lambda
\in
\mathcal W_{g},
$$
where $\lambda=(h_1,\ldots,h_r)$ ranges over partitions of $g$ into
odd integers at least $3$. Set
$$
x_z
:=
\sum_{\lambda}q_\lambda
[\Gamma_\lambda,\omega_\lambda]
\in
H_{2g}(\GKbarzeroBM).
$$

By the construction of $\Phi_g$ and $\nu_g$,
$$
\kappa_{\M^{ct}}^\vee\bigl(\Phi_g(z)\bigr)
=
\nu_g(x_z).
$$
Suppose that
$
(\tau_g^{ct})_*\bigl(\Phi_g(z)\bigr)=0.
$ Lemma~\ref{Lemma: Algebraic and tropical torelli comparison}
and Lemma~\ref{lem:bridgeless-graph-complex-to-Quillen} give
$$
\begin{aligned}
0
&=
(\iota_{\A})_{g,g}
\circ\kappa_{\A}^\vee
\circ(\tau_g^{ct})_*
\bigl(\Phi_g(z)\bigr)
\\
&=
(\iota_{\A})_{g,g}
\circ(\tau_g^{ct,\mathrm{trop}})_*
\circ\nu_g(x_z)
\\
&=
\rho_{g,g}\bigl((\jmath_g)_*x_z\bigr)
\\
&=
\sum_{\lambda}q_\lambda
\prod_{i=1}^r[\tau_{ W_{h_i}}]
\qquad\text{in }{}^QE^1_{g,g},
\end{aligned}
$$
where the final equality is
\eqref{eq:canonical-bouquet-image}.

By \cite[Theorem~7.4]{BrownChanGalatiusPayneHopf}, the assignment
$
W_h\longmapsto[\tau _{W_h}]
$
extends to an injection
$$
\QQ[W_3,W_5,W_7,\ldots]
\lhook\joinrel\longrightarrow
{}^QE^1.
$$
The monomials
$
\prod_{i=1}^r[\tau_{W_{h_i}}]
$
are therefore linearly independent. Hence $q_\lambda=0$ for every
$\lambda$, and thus $z=0$.
It follows that
$$
(\tau_g^{ct})_*\circ\Phi_g
$$
is injective for every $g \geq 2$.
\end{proof}

\subsection{The motivic Lie algebra and the extended Torelli map}

We now use the compact type comparison to relate the Lie algebra
structures arising from $\mathcal M_g$ and $\mathcal A_g$.
Throughout this subsection, we work with real coefficients. Set
\[
D_{\mathcal A}
:=
\bigoplus_{g\geq0}
W_0H_c^{2g}(\mathcal A_g;\mathbb R),
\qquad
D_{\mathcal M}
:=
\bigoplus_{g\geq2}
W_0H_c^{2g}(\mathcal M_g;\mathbb R).
\]
The Hopf algebra structure constructed in
\cite{BrownChanGalatiusPayneHopf} restricts to
$D_{\mathcal A}$. Its primitive subspace is a genus-graded
Lie algebra under the commutator bracket.

Let $\mathsf{GC}_2^{(g)}$ denote the genus $g$ part of
Kontsevich's cohomological even commutative graph complex.
We use the genus-graded version
\[
H^0(\mathsf{GC}_2)
:=
\bigoplus_{g\geq2}H^0(\mathsf{GC}_2^{(g)}).
\]
The graph-complex description of weight zero cohomology gives
a genus-graded isomorphism
\begin{equation*}\label{eq:CGP-diagonal-identification}
\gamma_{\mathcal M}\colon
H^0(\mathsf{GC}_2)\otimes_{\mathbb Q}\mathbb R
\xrightarrow{\sim}
D_{\mathcal M};
\end{equation*}
see \cite[Theorems~1.2 and~1.3]{ChanGalatiusPayne2021}.
We use this isomorphism to transport the graph-insertion
bracket to $D_{\mathcal M}$.

The construction in
\cite[Proposition~1.8 and Section~1.5]
{BrownChanGalatiusPayneHopf}
gives a genus-graded morphism of Lie algebras
\[
\beta_{\mathcal A}\colon
\operatorname{Prim}(D_{\mathcal A})
\longrightarrow
H^0(\mathsf{GC}_2)\otimes_{\mathbb Q}\mathbb R.
\]

This is the map recorded in
\cite[equation~(29)]{BrownPanzerPortner2026}.
We set
\[
\eta_{\mathcal A}
:=
\gamma_{\mathcal M}\circ\beta_{\mathcal A}
\colon
\operatorname{Prim}(D_{\mathcal A})
\longrightarrow
D_{\mathcal M}.
\]

The next theorem identifies $\eta_{\mathcal A}$
geometrically. Pulling a primitive class back along the
extended Torelli morphism gives the same class as first
applying $\eta_{\mathcal A}$ and then extending by zero
from $\mathcal M_g$ to $\mathcal M_g^{ct}$.

\begin{theorem}
\label{thm:compact type-torelli-comparison}
For $g\geq2$, let
\[
P_g^{\mathcal A}
:=
\operatorname{Prim}(D_{\mathcal A})
\cap
W_0H_c^{2g}(\mathcal A_g;\mathbb R),
\]
and write $\eta_{\mathcal A,g}$ for the restriction of
$\eta_{\mathcal A}$ to $P_g^{\mathcal A}$.
Then, for every $\alpha\in P_g^{\mathcal A}$,
\[
(\tau_g^{ct})^*(\alpha)
=
i_{g,!}\bigl(\eta_{\mathcal A,g}(\alpha)\bigr).
\]
\end{theorem}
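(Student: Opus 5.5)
The plan is to compare the two classes by pairing them against homology. Both $(\tau_g^{ct})^*(\alpha)$ and $i_{g,!}(\eta_{\mathcal A,g}(\alpha))$ lie in $W_0H_c^{2g}(\mathcal M_g^{ct};\mathbb R)$. By Theorem~\ref{thm:intro-weight zero-model} and its dual form \eqref{eq:dual-bridgeless-weight zero}, a class there is determined by its pairings with homology classes $[z]\in H_{2g}(\GKbarzeroBM)\otimes\mathbb R$. So I would fix a cycle $z=\sum_i a_i[G_i,\omega_i]$ in the dual bridgeless complex and show
\[
\langle (\tau_g^{ct})^*\alpha,[z]\rangle=\langle i_{g,!}\,\eta_{\mathcal A,g}(\alpha),[z]\rangle .
\]

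For the left side, $\langle(\tau_g^{ct})^*\alpha,[z]\rangle=\langle\alpha,(\tau_g^{ct})_*[z]\rangle$. By Lemma~\ref{Lemma: Algebraic and tropical torelli comparison}, $(\tau_g^{ct})_*[z]$ corresponds to $(\tau_g^{ct,\mathrm{trop}})_*\nu_g([z])$ in $H^{\mathrm{BM}}_{2g}(A_g^{\mathrm{trop}})$. After applying $(\iota_{\mathcal A})_{g,g}$, Lemma~\ref{lem:bridgeless-graph-complex-to-Quillen} identifies this with $\rho_{g,g}((\jmath_g)_*[z])\in{}^QE^1_{g,g}$. This is the image of $z$ under the graph Laplacian, viewed in $P_g/\mathrm{GL}_g(\ZZ)$.

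For the right side, the dual of $i_{g,!}$ on weight zero parts is, under the graph models, induced by the inclusion $\GKbarzeroBM\hookrightarrow(\overline{\mathrm{GK}}^0_g)^\vee$ of the bridgeless subcomplex. This is because $i_!$ is extension by zero from the stratum indexed by all stable graphs to the bridgeless ones, and the cones match. Therefore $\langle i_{g,!}\,\eta(\alpha),[z]\rangle=\langle\gamma_{\mathcal M}\beta_{\mathcal A}(\alpha),[z]\rangle$, where $z$ is now viewed as a cycle for $\mathsf{GC}_2$. I would then unwind the definition of $\beta_{\mathcal A}$ in \cite[Proposition~1.8, \S1.5]{BrownChanGalatiusPayneHopf}. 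There, a primitive class is sent to the graph cocycle whose value on a graph cycle $G$ is the pairing of $\alpha$ with the image $\rho_{g,g}[G]$ in the Quillen $E^1$-page, transported back through $\iota_{\mathcal A}$; concretely, this is the integral of the associated form over the Laplacian cone of $G$. With that description, both pairings equal $\langle\alpha,\iota_{\mathcal A}^{-1}\rho_{g,g}(\jmath_g)_*[z]\rangle$, which gives the theorem.

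The main obstacle is this last identification of $\beta_{\mathcal A}$. The element $\rho_{g,g}[z]$ lives in ${}^QE^1=\mathcal A^{\mathrm{BM}}\otimes\QQ[\epsilon]/(\epsilon^2)$, and $\pi_g$ forgets classes supported on $A_{g-1}^{\mathrm{trop}}$. One must check that the pairing of a primitive $\alpha$ with $\iota_{\mathcal A}(\cdot)$ is insensitive to these discrepancies. I would try to handle this using primitivity. Components involving $e$, or decomposable terms coming from block sums of lower rank, pair to zero with primitive classes, since primitivity means the coproduct of $\alpha$ has no mixed terms and the product on $\mathcal A^{\mathrm{BM}}$ is dual to that coproduct (\cite[Proposition~4.1]{BrownChanGalatiusPayneHopf}). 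A secondary check is that the sign and orientation conventions for $\gamma_{\mathcal M}$, the cell orientations $\omega$, and $\theta_{g,g}$ agree, so that no global sign appears.
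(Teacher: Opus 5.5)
Your architecture matches the paper's proof. You test against bridgeless cycles $z$, rewrite the Torelli side as $\langle\alpha,\iota_{\mathcal A}^{-1}\rho_{g,g}(\jmath_g)_*[z]\rangle$ via Lemmas~\ref{Lemma: Algebraic and tropical torelli comparison} and~\ref{lem:bridgeless-graph-complex-to-Quillen}, and identify $(i_{g,!})^\vee$ with the map $(u_g)_*$ induced by including bridgeless graphs into the ordinary graph complex. The gap is in the step you yourself flag. Your description of $\beta_{\mathcal A}$ (evaluate $\alpha$ on $\rho_{g,g}[G]$ for a graph cycle $G$) is not well posed. A cycle of the ordinary connected graph complex $\mathsf G^{(g)}$, whose differential only contracts edges, may contain graphs with bridges. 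The differential of $\operatorname{Gr}_g(C)$ also deletes bridges, so such a $G$ is in general not a cycle there, and $\rho_{g,g}[G]$ is undefined. Even on bridgeless representatives you would still need independence of the representative in $H_0(\mathsf G^{(g)})$.

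The missing idea is to link the two graph complexes through indecomposables. Let $K$ be the augmentation ideal of $\operatorname{Gr}(C)$, $L^\vee:=K/K^2$, and $q_g\colon\operatorname{Gr}_g(C)\to L^\vee_g$ the quotient. Deleting a bridge of a stable graph gives a disjoint union of two graphs with edges, which dies in $K/K^2$. Hence the identity on graphs is a chain map $\lambda_g\colon\mathsf G^{(g)}\to L^\vee_g$. By \cite[Lemma~5.21 and Proposition~5.22]{BrownChanGalatiusPayneHopf}, $(\lambda_g)_*\colon H_0(\mathsf G^{(g)})\to H_{2g}(L^\vee_g)$ is an isomorphism and $H_\bullet(L^\vee)\cong\operatorname{Indec}H_\bullet(\operatorname{Gr}(C))$. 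This yields $\Pi_g:=(\lambda_g)_*^{-1}\circ(q_g)_*\colon H_{2g}(\operatorname{Gr}_g(C))\to H_0(\mathsf G^{(g)})$.

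Primitivity is what lets you descend along $\Pi_g$. Let $R_g$ be the Hopf quotient ${}^QE^1\to{}^QE^1/(e)\cong\mathcal A^{\mathrm{BM}}$ (a left inverse of $\iota_{\mathcal A}$) followed by restriction to $P_g^{\mathcal A}$. Then $R_g\circ\rho_{g,g}$ kills products in the augmentation ideal, so it factors through $\Pi_g$. The construction of $\beta_{\mathcal A}$ in \cite{BrownChanGalatiusPayneHopf} then gives $R_g\circ\rho_{g,g}=\beta_{\mathcal A,g}^\vee\circ\Pi_g$.

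Your sketch also lacks the compatibility on bridgeless classes, which is a one-line chain-level check. Both $q_g\circ\jmath_g$ and $\lambda_g\circ u_g$ send a bridgeless graph to its class modulo $K^2$, so $\Pi_g\circ(\jmath_g)_*=(u_g)_*$. This is exactly what connects $\langle\beta_{\mathcal A}(\alpha),(u_g)_*[z]\rangle$ to $\rho_{g,g}(\jmath_g)_*[z]$.

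The discrepancy you worry about does not occur for $z$ itself. $\iota_{\mathcal A}$ is injective, so $\pi_g$ loses nothing, and Lemma~\ref{lem:bridgeless-graph-complex-to-Quillen} already places $\rho_{g,g}(\jmath_g)_*[z]$ in its image, with no $e$-component. Such components and block-sum decomposables arise only for the general classes of $H_{2g}(\operatorname{Gr}_g(C))$ through which $\beta_{\mathcal A}$ is defined. That is where primitivity is really needed.
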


\begin{proof}
Fix $g\geq2$. All spaces and complexes in the proof are
understood over $\mathbb R$. We work with the dual maps because the compatibility
statements established in the preceding subsection are
expressed on graph homology. Set
\[
T_g:=(\tau_g^{ct})^*|_{P_g^{\mathcal A}}.
\]
Our goal is to prove
\(
T_g^\vee
=
\eta_{\mathcal A,g}^\vee\circ(i_{g,!})^\vee.
\)
We first express $T_g^\vee$ using the graph model.
Let
\[
\Theta_g
:=
\left(\kappa_{\mathcal M^{ct}}^\vee\right)^{-1}
\circ\nu_g
\colon
H_{2g}(\GKbarzeroBM)
\xrightarrow{\sim}
\left(
W_0H_c^{2g}(\mathcal M_g^{ct};\mathbb R)
\right)^\vee
\]
be the canonical graph comparison.

To account for the restriction to primitive classes,
let $R_g$ denote the composite
\[
{}^QE^1_{g,g}
\longrightarrow
\mathcal A^{\mathrm{BM}}_{g,g}
\longrightarrow
(P_g^{\mathcal A})^\vee.
\]
The first arrow is induced by the canonical Hopf quotient
${}^QE^1/(e)\cong\mathcal A^{\mathrm{BM}}$, and the second
is dual to the inclusion of $P_g^{\mathcal A}$;
see \cite[Proposition~4.4 and proof of Theorem~1.1]
{BrownChanGalatiusPayneHopf}.
The first arrow is a left inverse to
$(\iota_{\mathcal A})_{g,g}$.
Thus
Lemmas~\ref{Lemma: Algebraic and tropical torelli comparison}
and~\ref{lem:bridgeless-graph-complex-to-Quillen}
give
\begin{equation*}\label{eq:dual-torelli-graph-composite}
T_g^\vee\circ\Theta_g
=
R_g\circ\rho_{g,g}\circ(\jmath_g)_*.
\end{equation*}

To compare the right hand side with $\beta_{\mathcal A,g}^\vee$,
we first show that it factors through the indecomposables of
graph homology. We begin by introducing a chain complex that
computes these indecomposables.

Let
\[
\varepsilon\colon
\operatorname{Gr}(C)\longrightarrow\mathbb R
\]
be the standard augmentation, and set
\[
K:=\ker(\varepsilon),
\qquad
L^\vee:=
\operatorname{Indec}(\operatorname{Gr}(C))
=
K/K^2.
\]
Since graph multiplication is disjoint union, a disjoint union
of two graphs with edges represents zero in $L^\vee$.
Let
\[
q_g\colon
\operatorname{Gr}_g(C)\longrightarrow L_g^\vee
\]
be the quotient map. By
\cite[Proposition~5.22]{BrownChanGalatiusPayneHopf},
the natural map
\[
\operatorname{Indec}
H_\bullet(\operatorname{Gr}(C))
\xrightarrow{\sim}
H_\bullet(L^\vee)
\]
is an isomorphism. We next identify the relevant homology of $L^\vee$ with
ordinary connected graph homology. Let $\mathsf G^{(g)}$ denote the homological graph complex
dual to $\mathsf{GC}_2^{(g)}$. The map
\[
\lambda_g\colon
\mathsf G_\bullet^{(g)}
\longrightarrow
(L_g^\vee)_{2g+\bullet},
\qquad
[\Gamma,\omega]
\longmapsto
[\Gamma,\omega]\bmod K^2,
\]
is a chain map. Indeed, deleting a bridge from a stable graph
produces a disjoint union of two graphs with edges, so this
term vanishes in $K/K^2$; the remaining differential is
edge contraction. By
\cite[Lemma~5.21 and Proposition~5.22]
{BrownChanGalatiusPayneHopf},
for $g\geq2$ this map induces an isomorphism
\[
(\lambda_g)_*\colon
H_0(\mathsf G^{(g)})
\xrightarrow{\sim}
H_{2g}(L_g^\vee).
\]
We therefore define
\begin{equation*}\label{eq:definition-Pi-g}
\Pi_g
:=
(\lambda_g)_*^{-1}\circ(q_g)_*
\colon
H_{2g}(\operatorname{Gr}_g(C))
\longrightarrow
H_0(\mathsf G^{(g)}).
\end{equation*}

We now verify that $R_g\circ\rho_{g,g}$ factors through
$\Pi_g$. The dual of the canonical Hopf quotient embeds
$P_g^{\mathcal A}$ as primitive elements in the cohomological
Quillen $E^1$-page. Since primitive elements vanish on products
of augmentation-zero homology classes, restriction to
$P_g^{\mathcal A}$ makes $R_g$ vanish on such products.
The map $\rho$ preserves multiplication and augmentation,
so $R_g\circ\rho_{g,g}$ also vanishes on products in the
augmentation ideal of graph homology. It therefore factors
through the indecomposables, and hence through $\Pi_g$.

Under the identifications above, the construction of
$\beta_{\mathcal A,g}$ in
\cite[equations~(81)--(82)]{BrownChanGalatiusPayneHopf}
identifies the induced map on ordinary connected graph
homology with $\beta_{\mathcal A,g}^\vee$. Consequently,
\begin{equation*}\label{eq:primitive-dual-comparison}
R_g\circ\rho_{g,g}
=
\beta_{\mathcal A,g}^\vee\circ\Pi_g.
\end{equation*}

It remains to understand what $\Pi_g$ does to a class
coming from the bridgeless graph complex.
Let
\[
u_g\colon
\bigl(\GKbarzeroBM\bigr)_{2g+\bullet}
\longrightarrow
\mathsf G_\bullet^{(g)}
\]
be the inclusion that preserves each oriented graph.
Both routes in
\[
q_g\circ\jmath_g
=
\lambda_g\circ u_g
\]
send a bridgeless graph to its class modulo $K^2$.
Passing to homology therefore gives
\begin{equation*}\label{eq:Pi-same-graph-comparison}
\Pi_g\circ(\jmath_g)_*
=
(u_g)_*.
\end{equation*}

We now return to the geometry. The bridgeless graph model
established above and the ordinary graph model of
\cite[Theorem~1.3]{ChanGalatiusPayne2021} give the respective
identifications
\[
H_{2g}(\GKbarzeroBM)
\cong
\widetilde H_{2g-1}(\Delta_g^{{bl}}),
\qquad
H_0(\mathsf G^{(g)})
\cong
\widetilde H_{2g-1}(\Delta_g).
\]
Under these isomorphisms, $(u_g)_*$ agrees with the map on reduced homology induced by the inclusion
\[
\Delta_g^{{bl}}\hookrightarrow\Delta_g.
\]

To relate this inclusion to extension by zero, use the common
compactification $\overline{\mathcal M}_g$. Under the standard
identifications of compactly supported cohomology with relative
cohomology, $i_{g,!}$ is the map
\[
H^{2g}\left(
\overline{\mathcal M}_g,
\partial\overline{\mathcal M}_g
\right)
\longrightarrow
H^{2g}\left(
\overline{\mathcal M}_g,
\Delta_{\mathrm{irr}}
\right)
\]
induced by
$\Delta_{\mathrm{irr}}\subseteq
\partial\overline{\mathcal M}_g$. Since $H^k(\Mb_g)$ is pure of weight $k$, its weight zero
part vanishes for $k>0$. Thus the connecting homomorphisms
in the long exact sequences associated to the pairs
$(\Mb_g,\partial\Mb_g)$ and
$(\Mb_g,\Delta_{\mathrm{irr}})$ induce isomorphisms
\[
\begin{aligned}
W_0H^{2g-1}(\partial\Mb_g)
&\xrightarrow{\sim}
W_0H_c^{2g}(\mathcal M_g),\\
W_0H^{2g-1}(\Delta_{\mathrm{irr}})
&\xrightarrow{\sim}
W_0H_c^{2g}(\mathcal M_g^{ct}).
\end{aligned}
\]
These connecting homomorphisms commute with restriction
from $\partial\overline{\mathcal M}_g$ to
$\Delta_{\mathrm{irr}}$. Thus, after identifying the
weight zero cohomology of these boundaries with the
cohomology of their dual complexes, $i_{g,!}$ agrees with
restriction
\[
\widetilde H^{2g-1}(\Delta_g)
\longrightarrow
\widetilde H^{2g-1}(\Delta_g^{{bl}}).
\]

Dualizing gives the same inclusion-induced homology map
described above. By the definitions of $\Theta_g$ and
$\gamma_{\mathcal M,g}$, we therefore obtain
\begin{equation*}\label{eq:u-extension-comparison}
(u_g)_*
=
\gamma_{\mathcal M,g}^\vee
\circ(i_{g,!})^\vee
\circ\Theta_g.
\end{equation*}

Combining these identities, we obtain
\[
\begin{aligned}
T_g^\vee\circ\Theta_g
&=
R_g\circ\rho_{g,g}\circ(\jmath_g)_*
\\
&=
\beta_{\mathcal A,g}^\vee
\circ\Pi_g\circ(\jmath_g)_*
\\
&=
\beta_{\mathcal A,g}^\vee\circ(u_g)_*
\\
&=
\beta_{\mathcal A,g}^\vee
\circ\gamma_{\mathcal M,g}^\vee
\circ(i_{g,!})^\vee
\circ\Theta_g
\\
&=
\eta_{\mathcal A,g}^\vee
\circ(i_{g,!})^\vee
\circ\Theta_g.
\end{aligned}
\]
Since $\Theta_g$ is an isomorphism, cancelling it and
dualizing gives
\[
(\tau_g^{ct})^*|_{P_g^{\mathcal A}}
=
i_{g,!}\circ\eta_{\mathcal A,g},
\]
as required.
\end{proof}

We now deduce Theorem~\ref{thm:intro-motivic-comparison}
from Theorem~\ref{thm:compact type-torelli-comparison}.
We first specify the two motivic families appearing in its
statement. For $k\geq1$, let
\[
\omega^{4k+1}
\in
W_0H_c^{4k+2}(\mathcal A_{2k+1};\mathbb R)
\]
denote the primitive class constructed in
\cite{BrownChanGalatiusPayneHopf}.
This class is denoted $[\omega_c^{4k+1}]$ in
\cite{BrownPanzerPortner2026}. Let
\[
\L_{\mathcal A}\subseteq
\operatorname{Prim}(D_{\mathcal A})
\]
be the real Lie subalgebra generated by
$\omega^5,\omega^9,\omega^{13},\ldots$.

Let $\mathfrak g^{\mathfrak m}$ denote the motivic Lie algebra
defined in
\cite[Section~3.1.4]{BrownPanzerPortner2026}.
It is a free graded Lie algebra with one generator in each
odd degree at least $3$. Set
\[
\mathfrak g_{\mathbb R}^{\mathfrak m}
:=
\mathfrak g^{\mathfrak m}\otimes_{\mathbb Q}\mathbb R.
\]
By \cite[Theorem~3]{BrownPanzerPortner2026}, the Lie subalgebra
of $H^0(\mathsf{GC}_2)\otimes_{\mathbb Q}\mathbb R$ generated
by the classes of the canonical graph cocycles is canonically
isomorphic to $\mathfrak g_{\mathbb R}^{\mathfrak m}$.
Using this realization, we regard
\[
\mathfrak g_{\mathbb R}^{\mathfrak m}
\subseteq
H^0(\mathsf{GC}_2)\otimes_{\mathbb Q}\mathbb R
\]
and define
\[
\L_{\mathcal M}
:=
\gamma_{\mathcal M}
\bigl(\mathfrak g_{\mathbb R}^{\mathfrak m}\bigr)
\subseteq D_{\mathcal M}.
\]

Finally, write
\[
i_!:=\bigoplus_{g\geq2}i_{g,!},
\qquad
(\tau^{ct})^*
:=
\bigoplus_{g\geq2}(\tau_g^{ct})^*.
\]

\begin{proof}[Proof of Theorem~\ref{thm:intro-motivic-comparison}]

By \cite[Theorems~3 and~24]{BrownPanzerPortner2026},
the map $\beta_{\mathcal A}$ identifies $\L_{\mathcal A}$
with the Lie subalgebra generated by the canonical graph
cohomology classes. Under the realization chosen above,
this gives an isomorphism
\[
\beta_{\mathcal A}|_{\L_{\mathcal A}}\colon
\L_{\mathcal A}
\xrightarrow{\sim}
\mathfrak g_{\mathbb R}^{\mathfrak m}.
\]

Since
$\eta_{\mathcal A}
=\gamma_{\mathcal M}\circ\beta_{\mathcal A}$,
it follows that $\eta_{\mathcal A}$ restricts to an
isomorphism
\[
\eta_{\mathcal A}|_{\L_{\mathcal A}}\colon
\L_{\mathcal A}
\xrightarrow{\sim}
\L_{\mathcal M}.
\]
Every element of $\L_{\mathcal A}$ is primitive, so
Theorem~\ref{thm:compact type-torelli-comparison},
applied genus by genus, gives
\[
(\tau^{ct})^*|_{\L_{\mathcal A}}
=
i_!\circ\bigl(\eta_{\mathcal A}|_{\L_{\mathcal A}}\bigr).
\]
Taking images yields
\[
(\tau^{ct})^*(\L_{\mathcal A})
=
i_!(\L_{\mathcal M}),
\]
as required. Moreover, $(\tau^{ct})^*|_{\L_{\mathcal A}}$ is injective.
Indeed, $\eta_{\mathcal A}|_{\L_{\mathcal A}}$ is an
isomorphism, while $i_!$ is injective on $D_{\mathcal M}$
by Theorem~\ref{thm: i: Mg to Mgct}. Their composite
$(\tau^{ct})^*|_{\L_{\mathcal A}}$ is therefore injective.

\end{proof}


\bibliographystyle{alpha}
\bibliography{references}

\end{document}